\documentclass{amsart}

\usepackage{amsthm, amssymb, amsmath, amscd}
\usepackage{eurosym}
\usepackage{amsfonts}
\usepackage{amsmath}
\usepackage{bbm}
\usepackage{setspace}
\usepackage{graphicx}
\usepackage{tensor}

\usepackage{color}

\usepackage[all,cmtip]{xy}

\usepackage{indentfirst}

\setcounter{MaxMatrixCols}{30}
\newtheorem{theorem}{Theorem}[section]
\newtheorem{thm*}{Theorem}

\newtheorem{cor}[theorem]{Corollary}

\newtheorem{lemma}[theorem]{Lemma}

\newtheorem{prop}[theorem]{Proposition}

\theoremstyle{definition}
\newtheorem{define}[theorem]{Definition}
\newtheorem{ex}[theorem]{Example}
\newtheorem{remark}[theorem]{Remark}
\newtheorem{notation}[theorem]{Notation}

\newcommand{\field}[1]{\mathbb{#1}}

\newcommand{\zkz}{\field{Z}/k\field{Z}}

\newcommand{\ch}{\mathrm{ch}}
\newcommand{\Z}{\field{Z}}
\newcommand{\C}{\field{C}}
\newcommand{\Q}{\field{Q}}

\DeclareFontFamily{OT1}{pzc}{}
\DeclareFontShape{OT1}{pzc}{m}{it}{<-> s * [1.1] pzcmi7t}{}
\DeclareMathAlphabet{\mathpzc}{OT1}{pzc}{m}{it}

\begin{document}
\title[Relative geometric assembly: Part II]{Relative geometric assembly and mapping cones\\
Part II: Chern characters and the Novikov property}
\author{Robin J. Deeley, Magnus Goffeng}
\date{\today}

\begin{abstract}
We study Chern characters and the assembly mapping for free actions using the framework of geometric $K$-homology. The focus is on the relative groups associated with a group homomorphism $\phi:\Gamma_1\to \Gamma_2$ along with applications to Novikov type properties. In particular, we prove a relative strong Novikov property for homomorphisms of hyperbolic groups and a relative strong $\ell^1$-Novikov property for polynomially bounded homomorphisms of groups with polynomially bounded cohomology in $\C$. As a corollary, relative higher signatures on a manifold with boundary $W$, with $\pi_1(\partial W)\to \pi_1(W)$ belonging to the class above, are homotopy invariant. 
\end{abstract}

\maketitle

\section{Introduction}

This paper is a continuation of ``Relative geometric assembly and mapping cones, Part I: The geometric model and applications" \cite{DGrelI}. The geometric constructions in \cite{DGrelI} were inspired by the analytic constructions in \cite{CWY}. In the present paper, we construct Chern characters on the relative $K$-homology groups and consider relative assembly at the level of homology. These constructions lead to (especially in specific examples) a better understanding of the relative assembly map associated with a group homomorphism $\phi:\Gamma_1\to \Gamma_2$. This assembly map relates the relative $K$-homology group $K_*(B\phi)$ with the relative $K$-theory group $K_{*+1}(C_\phi)$ of the mapping cone $C^*$-algebra $C_\phi$ constructed from $\phi$. The definitions of these groups and the assembly map are reviewed in Section \ref{sec:prel}. In \cite{DGrelI}, the relative assembly map was used to obtain vanishing results for manifolds with boundaries in the presence of positive scalar curvature metrics. In the present paper, we consider further applications such as the relative Novikov property of group homomorphisms $\phi:\Gamma_1\to \Gamma_2$.

As mentioned, an analytic version of relative assembly has been studied using localization algebras in \cite{CWY}. The approach in this paper is based on the geometric model for relative assembly in \cite{DGrelI}. This situation can be compared to models for Higson-Roe's analytic surgery group: the analytic model was originally constructed using coarse geometry \cite{HRsur1} and later using localization algebras \cite{xieyu}. A geometric model is constructed in \cite{DG} based on the ideas of relative constructions in geometric $K$-homology from \cite{DeeZkz, DeeRZ}. While the two analytically flavoured constructions of Higson-Roe's surgery group, via coarse geometry and localization algebras, come with the flexibility of $K$-theory, they often use non-separable $C^*$-algebras and defining geometric invariants thereon requires substantial work (see for instance  \cite{HReta,zenobisecondary}). The geometric model \cite{DG,DGII,DGIII} allows for explicit geometric invariants and, in particular, Chern characters defined at the level of cycles that are related to the higher index theorems of Wahl \cite{WahlAPSForCstarBun} and Lott \cite{Lott}. The results in the present paper for the assembly of relative $K$-groups are closely modelled on the results in \cite{DGIII} for Higson-Roe's surgery group.

Our main results centre on the relationship between relative $K$-groups and relative homology through the construction of Chern characters. Chern characters have previously been studied in this context in \cite{BD, DGIII} and its relation to Baum-Connes' assembly mapping in \cite{bcchern}. A relative Chern character of Dirac operators on manifolds with boundaries was also studied in \cite{leschmopfl} using relative JLO-cocycles in cyclic theory. The underlying philosophy fits into Baum's approach to index theory, see \cite[page $154$]{BD} and the discussion in \cite{BE}. We recall its salient points below in Subsection \ref{baumsapp}. Before doing so, we state the main results.

\subsection{Main results}
The basic setup of the paper is as follows. Let $\Gamma_1$ and $\Gamma_2$ be countable discrete groups and $\phi:\Gamma_1\to \Gamma_2$ a group homomorphism. We fix Fr\'echet algebra completions $\mathcal{A}_i$ of $\C[\Gamma_i]$, $i=1,2$, such that $\phi$ extends to a continuous $*$-homomorphism $\phi_\mathcal{A}:\mathcal{A}_1\to \mathcal{A}_2$. In applications, we often assume that $\mathcal{A}_i$ are continuously embedded as dense $*$-subalgebras of some $C^*$-algebras, i.e. admit faithful $*$-representations on a Hilbert space. It simplifies the situation, but is not needed for the general theory, to assume that $\phi_\mathcal{A}$ is continuous in the topology of the ambient $C^*$-algebras. The examples of ambient $C^*$-algebras to keep in mind are the maximal completions $C^*_{\bf max}(\Gamma_1)\to C^*_{\bf max}(\Gamma_2)$ and the reduced completions $C^*_{\bf red}(\Gamma_1)\to C^*_{\bf red}(\Gamma_2)$. 

We make use of the absolute Chern character $\ch^\mathcal{A}:K_*^{\rm geo}(pt;\mathcal{A})\to \hat{H}^{\rm rel}_*(\mathcal{A})$ previously defined in \cite[Section 4]{DGIII} (we recall the construction below in Section \ref{absolutsubse}).  Here $K_*^{\rm geo}(pt;\mathcal{A})$ and $\hat{H}^{\rm rel}_*(\mathcal{A})$ denote the geometric $K$-homology and topological de Rham homology, respectively, of $\mathcal{A}$ as defined in \cite{DG} and \cite[Section 4]{DGIII}, respectively. There are corresponding relative groups $K_*^{\rm geo}(pt;\phi_\mathcal{A})$ and $\hat{H}^{\rm rel}_*(\phi_\mathcal{A})$ that we introduce in Subsubsection \ref{remarkongengeomodel} and Subsection \ref{relativedersubs}. 

\begin{thm*}[See Theorem \ref{cheronkthe} on page \pageref{cheronkthe}]
There is a Chern character $\ch^{\phi_\mathcal{A}}_{\rm rel}:K_*^{\rm geo}(pt;\phi_\mathcal{A})\to \hat{H}^{\rm rel}_*(\phi_\mathcal{A})$ that fits into a commuting diagram with exact rows:
\begin{center}
\scriptsize
$\begin{CD}
@>>> K_*^{\rm geo}(pt; \mathcal{A}_1) @>(\phi_\mathcal{A})_*>> K_*^{\rm geo}(pt;\mathcal{A}_2) @>r>> K_*^{\rm geo}(pt;\phi_\mathcal{A}) @>\delta>> K_{*-1}^{\rm geo}(pt;\mathcal{A}_1) @>>> \\
@. @VV\ch^{\mathcal{A}_1}V @VV\ch^{\mathcal{A}_2}V  @VV\ch_{\rm rel}^{\phi_\mathcal{A}} V  @VV\ch^{\mathcal{A}_1} V\\
@>>> \hat{H}_*^{\rm dR}(\mathcal{A}_1) @>(\phi_\mathcal{A})_*>>\hat{H}_*^{\rm dR}(\mathcal{A}_2) @>>> \hat{H}_*^{\rm rel}(\phi_\mathcal{A}) @>>> \hat{H}_*^{\rm dR}(\mathcal{A}_1)@>>> \\
\end{CD}$
\end{center}
\end{thm*}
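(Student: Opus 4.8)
The plan is to construct $\ch^{\phi_\mathcal{A}}_{\rm rel}$ directly on relative geometric cycles, closely following the transgression construction of the absolute Chern character in \cite[Section 4]{DGIII}; the structural reason this works is that $K_*^{\rm geo}(pt;\phi_\mathcal{A})$ and $\hat H_*^{\rm rel}(\phi_\mathcal{A})$ are both mapping cone type objects and $(\ch^{\mathcal{A}_1},\ch^{\mathcal{A}_2})$ is a transformation of the defining data compatible with $(\phi_\mathcal{A})_*$. Concretely, I would first fix the cycle level pictures of the two relative groups (from \cite{DG,DGrelI} and Subsection \ref{relativedersubs}): a class in $K_*^{\rm geo}(pt;\phi_\mathcal{A})$ is represented by a relative cycle consisting of an absolute $\mathcal{A}_1$-cycle $x$ together with a geometric $\mathcal{A}_2$-cycle-with-boundary $W$ whose boundary data realizes $(\phi_\mathcal{A})_*(x)$ as a boundary, and a class in $\hat H_*^{\rm rel}(\phi_\mathcal{A})$ is represented by a cocycle in the mapping cone complex of $(\phi_\mathcal{A})_*\colon\hat\Omega_\bullet(\mathcal{A}_1)\to\hat\Omega_\bullet(\mathcal{A}_2)$, i.e.\ a pair $(\omega,\eta)$ with $\omega$ closed and $d\eta=(\phi_\mathcal{A})_*\omega$ up to the mapping cone signs.

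On such a relative cycle I would set
\[
\ch^{\phi_\mathcal{A}}_{\rm rel}(x,W):=\bigl(\ch^{\mathcal{A}_1}(x),\ \mathrm{CS}(W)\bigr),
\]
where $\ch^{\mathcal{A}_1}(x)$ is the closed form produced by the absolute Chern character and $\mathrm{CS}(W)$ is a Chern--Simons/eta-type transgression form on $\mathcal{A}_2$ attached to $W$. The required property is a Stokes identity in the topological de Rham complex of $\mathcal{A}_2$, namely $d\,\mathrm{CS}(W)=(\phi_\mathcal{A})_*\,\ch^{\mathcal{A}_1}(x)$, which should follow from the local index and transgression formulas underpinning \cite{DGIII} together with naturality of the absolute Chern character, $\ch^{\mathcal{A}_2}\circ(\phi_\mathcal{A})_*=(\phi_\mathcal{A})_*\circ\ch^{\mathcal{A}_1}$. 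Thus the pair is a cocycle in the mapping cone complex and defines a class in $\hat H_*^{\rm rel}(\phi_\mathcal{A})$.

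The bulk of the work is well-definedness: I would show that this assignment descends through the equivalence relation generated by bordism, (relative) vector bundle modification, and disjoint union/direct sum, and is independent of the auxiliary geometric data (connections, metrics, regularizers) entering $\mathrm{CS}(W)$. Each relation, realized by a relative cycle one dimension up, should furnish an explicit primitive in the mapping cone complex exhibiting the difference of the corresponding images as a coboundary; and changing auxiliary data alters $\mathrm{CS}(W)$ by an exact form plus the transgression of the induced change on the boundary, which is precisely a mapping cone coboundary. These are the relative analogues of the computations in \cite[Section 4]{DGIII} and carry most of the technical weight.

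Finally I would verify the diagram on cycles. The left square is naturality of the absolute Chern character under $(\phi_\mathcal{A})_*$. For the square involving $r$: the map $r$ sends an $\mathcal{A}_2$-cycle $y$ to the relative cycle with empty $\mathcal{A}_1$-part and $W=y$ (closed, no boundary), so $\ch^{\phi_\mathcal{A}}_{\rm rel}(r(y))=(0,\ch^{\mathcal{A}_2}(y))$, which is the image of $\ch^{\mathcal{A}_2}(y)$ under the mapping cone inclusion. For the square involving $\delta$: the connecting map reads off the $\mathcal{A}_1$-part $x$, and $\ch^{\mathcal{A}_1}(\delta(\,\cdot\,))$ equals the first component of $\ch^{\phi_\mathcal{A}}_{\rm rel}$, matching the mapping cone projection; exactness of both rows is already part of the established structure of these relative groups. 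The main obstacle I anticipate is making $\mathrm{CS}(W)$ precise enough that the Stokes identity holds exactly in the \emph{completed} noncommutative de Rham complex of $\mathcal{A}_2$ — this requires careful control of the topologies on the spaces of noncommutative forms over the Fr\'echet algebras $\mathcal{A}_i$ and of the signs in the mapping cone differential; once that is settled, the remaining verifications run parallel to the absolute case.
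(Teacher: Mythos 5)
Your proposal is correct and follows essentially the same route as the paper: there, $\ch_{\rm rel}^{\phi_\mathcal{A}}$ is defined on cycles by the pair of Chern forms $(\mathrm{Ch}_{\mathcal{A}_2}(\tilde{\nabla}_\mathcal{E}),\mathrm{Ch}_{\mathcal{A}_1}(\tilde{\nabla}_\mathcal{F}))$ of boundary-compatible total connections -- your $\mathrm{CS}(W)$ is precisely $\int_W\mathrm{tr}(\mathrm{e}^{-\tilde{\nabla}_\mathcal{E}^2})\wedge\mathrm{Td}(W)$ -- with the Stokes identity of Proposition \ref{stokesanddzero} making the pair a cocycle in the mapping cone complex. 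Well-definedness under bordism and vector bundle modification is likewise delegated to the relative analogues of the computations in \cite[Section 4]{DGIII}, and the commutativity of the squares with $r$ and $\delta$ is verified on cycles exactly as you describe.
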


Fr\'echet algebra completions of the group algebra allow for definitions of both absolute and relative assembly maps for free actions. These mappings take values in suitable geometric $K$-homology groups of a point; precise definitions are given in Section \ref{subseonrelassforfre}, following \cite{DGrelI}. These free assembly maps are denoted by $\mu^\mathcal{A}_{\textnormal{geo}} :K_*(B\Gamma)\to K_*^{\rm geo}(pt; \mathcal{A})$ in the absolute case and $\mu^{\phi_\mathcal{A}}_{\textnormal{geo}} :K_*(B\phi)\to K_*^{\rm geo}(pt; \phi_\mathcal{A})$ in the relative case, compare to the $C^*$-algebraic setup in \cite[Section 3]{DGrelI}. In Section \ref{assinhom} we define assembly mappings at the level of homology denoted by $\mu^\mathcal{A}_{\rm M}:H_*(B\Gamma)\to \hat{H}^{\rm dR}_*(\mathcal{A})$ in the absolute case and $\mu^{\phi_\mathcal{A}}_{\rm M}:H_*(B\phi)\to \hat{H}^{\rm rel}_*(\phi_\mathcal{A})$ in the relative case.

\begin{thm*}[See Theorem \ref{thmonassembly} on page \pageref{thmonassembly}]
The following diagram commutes
\begin{center}
$\begin{CD}
K^{\textnormal{geo}}_*(B \phi) @>\mu_{\textnormal{geo}}^{\phi_{\mathcal{A}}} >> K^{\textnormal{geo}}_*(pt;\phi_{\mathcal{A}})  \\
@V\mathrm{ch}^{B\phi}_*V
V @VV\mathrm{ch}^{\phi_{\mathcal{A}}}_*V   \\
H_*(B\phi)@>\mu_{\rm M}^{\phi_\mathcal{A}} >> \hat{H}^{\rm rel}_*(\phi_{\mathcal{A}}) \\
\end{CD}$
\end{center}
Here $\mathrm{ch}^{B\phi}_*:K^{\textnormal{geo}}_*(B \phi) \to H_*(B\phi)$ denotes the relative Chern character (see Subsection \ref{subseconrelhomonkjom} below). The analogous diagram for the absolute assembly mappings also commutes.
\end{thm*}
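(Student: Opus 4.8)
The plan is to reduce the statement to the already-established absolute compatibility results together with naturality of the mapping-cone construction. Recall that $K^{\rm geo}_*(B\phi)$ and $K^{\rm geo}_*(pt;\phi_\mathcal{A})$ are by construction the homology groups of mapping cones of chain-level maps realizing $(\phi)_*$ and $(\phi_\mathcal{A})_*$ respectively, so that both sit in long exact sequences relating them to the absolute groups; the same is true for $H_*(B\phi)$ and $\hat H^{\rm rel}_*(\phi_\mathcal{A})$ on the homology side. Both $\mu^{\phi_\mathcal{A}}_{\rm geo}$ and $\mu^{\phi_\mathcal{A}}_{\rm M}$ are defined as maps of such mapping cones, induced by the absolute assembly maps $\mu^{\mathcal{A}_i}_{\rm geo}$, $\mu^{\mathcal{A}_i}_{\rm M}$ together with a chosen homotopy exhibiting compatibility of absolute assembly with the push-forward along $\phi$; similarly $\ch^{\phi_\mathcal{A}}_*$ is the mapping-cone map induced by $\ch^{\mathcal{A}_1}$, $\ch^{\mathcal{A}_2}$ and $\ch^{B\phi}_*$ is induced by the absolute Chern characters on $B\Gamma_1$, $B\Gamma_2$. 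The strategy is therefore: first verify the statement on the absolute pieces, then check that the relative map induced on mapping cones is well defined and that the induced square commutes.

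First I would record the absolute compatibility: the square relating $\mu^{\mathcal{A}_i}_{\rm geo}$, $\mu^{\mathcal{A}_i}_{\rm M}$, $\ch^{\mathcal{A}_i}$ and $\ch^{B\Gamma_i}_*$ commutes for $i=1,2$. This is exactly the absolute case of the theorem and, as the excerpt indicates, is proved in the manner of \cite{DGIII}; I would either cite it from the earlier sections of the paper or prove it directly by pushing a cycle $(M,E,f)$ for $B\Gamma_i$ through both routes and comparing the resulting differential forms, using that the Chern character at the level of cycles is natural for the maps inducing assembly. Second, I would spell out the mapping-cone bookkeeping: choosing compatible chain-level models, the relative assembly map is a pair $(\mu^{\mathcal{A}_1}_{?}, h)$ where $h$ is a homotopy, and likewise the relative Chern character is a pair $(\ch^{\mathcal{A}_1}, k)$; composing in the two orders around the square produces two maps of mapping cones which agree in the first component by the absolute result, so what remains is to compare the two homotopies. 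I expect this comparison to be the main obstacle: one must check that the homotopy $h$ built from compatibility of assembly with $\phi$, pushed through the Chern character, is chain-homotopic (or equal, with the right choices) to the homotopy obtained by first applying the Chern character homotopy $k$ and then the homology-level assembly. Concretely this amounts to a secondary naturality statement for the Chern character, one dimension up, on the cylinder $M \times [0,1]$ of cycles.

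To handle that obstacle I would work at the level of cycles and transgression forms. A class in $K^{\rm geo}_*(B\phi)$ is represented by a triple consisting of a cycle for $B\Gamma_2$, a cycle for $B\Gamma_1$, and a bordism over $B\Gamma_2$ witnessing that the image of the second under $\phi$ is bordant to (a bundle modification of) the first. Applying relative assembly yields an analogous triple of $\mathcal{A}$-cycles; applying the relative Chern character to either yields a relative current, i.e. a pair of forms together with a primitive of their difference. The claim that the square commutes then becomes: the primitive produced by "assemble then take Chern character" and the primitive produced by "take Chern character then assemble" differ by an exact form, canonically. I would prove this by exhibiting an explicit transgression, obtained by running the absolute cycle-level Chern character argument of \cite{DGIII} on the bordism piece of the triple; the boundary terms of this transgression are exactly the two primitives in question, by Stokes' theorem, which gives the needed identity in $\hat H^{\rm rel}_*(\phi_\mathcal{A})$. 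Finally, the absolute case of the diagram is then the special case where the $B\Gamma_1$-component and the bordism are empty, which gives the last sentence of the statement for free; I would remark that all the choices (of chain homotopies, of transgression forms) only affect the maps up to the homotopies that are already quotiented out in the definitions of the relative groups, so the commuting square is independent of them.
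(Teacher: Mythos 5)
There is a genuine gap, and it starts with your guess about how the relative objects and maps are defined. In this paper $K^{\textnormal{geo}}_*(B\phi)$ and $K^{\textnormal{geo}}_*(pt;\phi_\mathcal{A})$ are \emph{not} homologies of mapping cones of chain maps: they are defined directly by relative Baum--Douglas cycles ($(W,E,(f,g))$ with $f:W\to B\Gamma_2$, $g:\partial W\to B\Gamma_1$, $f|_{\partial W}=B\phi\circ g$, not the ``triple plus bordism'' you describe), and $\mu_{\rm geo}^{\phi_\mathcal{A}}$ and $\ch_{\rm rel}^{\phi_\mathcal{A}}$ are defined cycle-by-cycle, not as pairs (absolute map, chosen homotopy). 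Most importantly, the map $\mu_{\rm M}^{\phi_\mathcal{A}}$ the theorem is about (Definition \ref{definitionofmishass}) is not induced on mapping cones by the absolute assembly maps --- the absolute $\mu_{\rm M}^{\mathcal{A}}$ has no chain-level realization that could induce anything on a cone; $\mu_{\rm M}^{\phi_\mathcal{A}}$ is only defined on classes of the form $(f,g)_*(\nu\cap[W])$ by $x\mapsto[I_{f,g}(\nu)]$. Consequently your reduction ``absolute squares plus a comparison of the two homotopies'' is not an argument about the maps in the statement, and (as you partly acknowledge) commutativity of the relative square cannot be extracted from the absolute squares and the long exact sequences alone; the chain-level homotopies $h$ and $k$ you propose to compare simply do not exist in the paper's setup. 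You also never address why $\mu_{\rm M}^{\phi_\mathcal{A}}$ is well defined on all of $H_*(B\phi)$ (independence of the presentation $(f,g)_*(\nu\cap[W])$), which in the paper is the substantive point and rests on $\ch_{\rm rel}^{B\phi}$ being a complex isomorphism (Theorem \ref{cheronhom}).

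The actual proof is much more direct than the secondary/transgression comparison you anticipate, and the ``main obstacle'' you identify has been designed away. With the Mishchenko-bundle total connections of Example \ref{coveringexamplerelative}, chosen to satisfy the boundary compatibility \eqref{boundarycompatibility}, the computation in Example \ref{mishcurv} and Remark \ref{remarkonifg} shows that $\ch_{\rm rel}^{\phi_\mathcal{A}}(\mu_{\rm geo}(W,E,(f,g)))$ is represented by exactly $I_{f,g}(\ch(\nabla_E)\wedge\mathrm{Td}(W))$, which is by Definition \ref{definitionofmishass} a representative of $\mu_{\rm M}^{\phi_\mathcal{A}}(\ch^{B\phi}_*(W,E,(f,g)))$: the two routes around the square produce the \emph{same} relative de Rham cycle, so no transgression term or exactness argument is needed. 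In other words, the content lies in constructing the compatible total connections and the map $I_{f,g}$, and in the well-definedness of $\mu_{\rm M}^{\phi_\mathcal{A}}$; once those are in place the commutativity is immediate from the construction, whereas your proposal spends its effort on a homotopy-comparison step that neither exists nor is needed and leaves the genuinely needed inputs unproved.
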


A natural question is what conditions imply that the relative free assembly map is injective. Whenever $\mu^\phi_{\rm geo}:K_*^{\rm geo}(B\phi) \to K_*^{\rm geo}(pt;\phi:C^*_\epsilon(\Gamma_1)\to C^*_\epsilon(\Gamma_2))$ is injective (for some $C^*$-completion $\epsilon$) we say that $\phi$ has the $\epsilon$-strong relative Novikov property. As was shown in \cite[Theorem 4.18]{DGrelI}, the strong relative Novikov property implies the relative Novikov property as discussed in \cite[Section 12]{weinnov} and \cite[Section 4.9]{Lott}. The relative Novikov property for $\phi$ concerns homotopy invariance of relative higher signatures on manifolds with boundary where the boundary inclusion factors over $\phi$ at the level of fundamental groups, for details see \cite[Section 4.3]{DGrelI} and Remark \ref{strongremark} (see page \pageref{strongremark}). When $\Gamma_1$ and $\Gamma_2$ are torsion-free, the five lemma guarantees that $\phi$ has the $\epsilon$-strong Novikov property if $\Gamma_1$ and $\Gamma_2$ satisfies the $\epsilon$-Baum-Connes conjecture, in this case $\mu_\phi^{\rm geo}$ will even be an isomorphism. In the presence of torsion, there is currently no known relationship between the strong relative Novikov property of $\phi$ and the validity of the Baum-Connes conjecture for $\Gamma_1$ and $\Gamma_2$ respectively, a question we aim to address in future work. 

If the group algebras $\C[\Gamma_1]$ and $\C[\Gamma_2]$ admit suitable completions $\mathcal{A}_1$ and $\mathcal{A}_2$ it is possible to prove the strong relative Novikov property using Chern characters along the same lines as in \cite{cmnovikov}. A precise condition is provided in Theorem \ref{theoremonabscond} and Corollary \ref{cornoviko}. We apply this to hyperbolic groups in Theorem \ref{rnovikovforhyp} and to groups with polynomially bounded cohomology in $\C$ in Theorem \ref{polbddthm}. These results read as follows.

\begin{thm*}
\label{apptonovi}
A homomorphism of hyperbolic groups that is continuous in the reduced $C^*$-norm has the reduced strong relative Novikov property. Moreover, any polynomially bounded homomorphism of groups with polynomially bounded cohomology in $\C$ (see Definition \ref{pcandpolbdd}) will have the $\ell^1$-strong relative Novikov property.
\end{thm*}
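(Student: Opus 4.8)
The plan is to reduce the relative statement to the corresponding absolute statements through the commuting diagram of Theorem \ref{cheronkthe} and a five-lemma argument, so the real content is to verify the two absolute hypotheses and then feed them into the general criterion of Theorem \ref{theoremonabscond}. First I would recall the strategy of Connes--Moscovici \cite{cmnovikov}: to prove a strong Novikov property for a group $\Gamma$ with completion $\mathcal{A}$ one shows that (i) the assembly map $\mu^{\mathcal{A}}_{\rm geo}:K_*(B\Gamma)\to K_*^{\rm geo}(pt;\mathcal{A})$ becomes rationally injective after applying the Chern character, which by the commuting square relating $\mu^{\mathcal{A}}_{\rm geo}$ and $\mu^{\mathcal{A}}_{\rm M}$ (the absolute case of Theorem \ref{thmonassembly}) reduces to injectivity of $\mu^{\mathcal{A}}_{\rm M}:H_*(B\Gamma)\to \hat H^{\rm dR}_*(\mathcal{A})$ on the appropriate subspace, and (ii) the relevant group cohomology classes are represented by forms that extend continuously to $\mathcal{A}$ — this is exactly where hyperbolicity (via the Jolissaint/Connes--Moscovici property RD and the de la Harpe--Jolissaint estimates giving holomorphically closed smooth subalgebras of $C^*_{\rm red}$) or polynomially bounded cohomology (via the $\ell^1$-type completion absorbing polynomially bounded cocycles) enters. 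I would package both of these into the verification of the hypotheses of Theorem \ref{theoremonabscond}/Corollary \ref{cornoviko}, which I am assuming give: if $\mathcal{A}_1,\mathcal{A}_2$ are completions for which the absolute assembly maps are injective after Chern character and $\phi_{\mathcal{A}}$ is the continuous extension, then $\phi$ has the corresponding strong relative Novikov property.

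Next, for the hyperbolic case, I would take $\mathcal{A}_i$ to be the Connes--Moscovici--Jolissaint smooth subalgebra $H^\infty_L(\Gamma_i)\subset C^*_{\rm red}(\Gamma_i)$ (Sobolev-type completion associated to a word-length function $L$), which for hyperbolic groups is a dense, holomorphically closed Fréchet subalgebra by property RD, so that $K_*(\mathcal{A}_i)\cong K_*(C^*_{\rm red}(\Gamma_i))$. A word-length function on a hyperbolic group is a quasi-isometry-controlled, polynomially bounded cocycle-witness in the sense needed, and — crucially — the continuous extension $\phi_{\mathcal{A}}:\mathcal{A}_1\to\mathcal{A}_2$ exists because $\phi$ is Lipschitz for word lengths (the image of a generator has bounded length), so it is bounded for the Sobolev norms. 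One then invokes that each $\mathcal{A}_i$ satisfies the absolute Chern-character injectivity, citing \cite{cmnovikov} for the absolute strong Novikov property of hyperbolic groups and the fact that the relevant injectivity factors through $\hat H^{\rm dR}_*(\mathcal{A}_i)$; this is where I expect to lean on Theorem \ref{thmonassembly} to transport the known absolute statement into the homological picture that the relative criterion consumes.

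For the $\ell^1$-case, I would take $\mathcal{A}_i=\ell^1(\Gamma_i)$ (or the appropriate dense weighted/unweighted $\ell^1$-completion) and use the hypothesis that $H^*(\Gamma_i;\C)$ is polynomially bounded, meaning each cohomology class has a representative cocycle of polynomial growth in word length; pairing such a cocycle with an $\ell^1$-chain and a polynomially bounded homomorphism stays convergent, which is what makes $\mu^{\mathcal{A}_i}_{\rm M}$ rationally injective on the image of $H_*(B\Gamma_i)$. The polynomial boundedness of $\phi$ is exactly the condition ensuring $\phi_{\ell^1}:\ell^1(\Gamma_1)\to\ell^1(\Gamma_2)$ is continuous and, more importantly, pulls polynomially bounded cocycles on $\Gamma_2$ back to polynomially bounded cocycles on $\Gamma_1$, so the two vertical injectivity statements are compatible with the horizontal maps in the relative diagram. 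Assembling: both absolute hypotheses of Theorem \ref{theoremonabscond} hold, its conclusion gives that $\phi$ has the $\ell^1$-strong relative Novikov property.

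The main obstacle I anticipate is not the homological bookkeeping — the five-lemma chase in the diagram of Theorem \ref{cheronkthe} is routine once the absolute inputs are in place — but rather checking that the chosen smooth/$\ell^1$ completions $\mathcal{A}_1,\mathcal{A}_2$ are \emph{simultaneously} good: each must be dense, (for the hyperbolic case) holomorphically closed with the right $K$-theory, support the de Rham–type homology $\hat H^{\rm rel}_*(\mathcal{A}_i)$ with a functorial Chern character, and, critically, be such that the group homomorphism $\phi$ extends continuously $\mathcal{A}_1\to\mathcal{A}_2$ in a way compatible with the cyclic-cohomology pairings. The delicate point is the interaction between "continuous in the reduced $C^*$-norm" (hypothesis on $\phi$) and "bounded for the Sobolev/Jolissaint norm" (needed for $\phi_{\mathcal{A}}$ to exist): one must verify that a homomorphism between hyperbolic groups, being a coarse map, automatically has the length-Lipschitz property needed, and handle the case where $\phi$ is not injective (so the induced map on length functions is only a large-scale contraction, not an isometry) — I would expect this to require a short lemma bounding $L_2(\phi(g))$ by a constant times $L_1(g)$ and then a standard interpolation argument to pass to the smooth completions.
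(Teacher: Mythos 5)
There is a genuine gap, and it sits exactly where you declared the work to be ``routine''. You assume that Theorem \ref{theoremonabscond}/Corollary \ref{cornoviko} say: if the two \emph{absolute} assembly maps are (rationally) injective after the Chern character and $\phi_{\mathcal{A}}$ exists, then the relative property follows by a five-lemma chase in the diagram of Theorem \ref{cheronkthe}. They do not, and no such reduction is possible: injectivity of the outer vertical maps in a five-term ladder does not give injectivity of the middle one (one would also need a surjectivity that assembly never provides), and the paper explicitly emphasizes that there is no known way to deduce the relative Novikov property from the absolute statements for $\Gamma_1$ and $\Gamma_2$. What Theorem \ref{theoremonabscond} actually requires is \emph{relative admissibility}: every class in $H^*(B\phi)$ must be represented by a relative cocycle $(c_1,c_2)$ whose associated functional extends continuously to $\hat{H}^{\rm rel}_*(\phi_\mathcal{A})$. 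The paper produces such representatives by running the five lemma in the \emph{cohomology of the completions} (Lemma \ref{admisforle}), and for that it needs the absolute comparison maps $H^*_{\mathcal{A}_i}(\Gamma_i)\to H^*(B\Gamma_i)$ to be \emph{isomorphisms}, not merely surjections; this is precisely why the hyperbolic case rests on Proposition \ref{hyperiso} (Mineyev's surjectivity of bounded cohomology upgraded to an isomorphism via Ji--Ogle--Ramsey and the $FP^\infty$ property), and why Definition \ref{pcandpolbdd} demands an isomorphism rather than the weaker surjectivity condition (PC). Your proposal never addresses how relative cocycle representatives with the required continuity are obtained, which is the actual content of the theorem.

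Your choice of completions is also problematic. For the hyperbolic case you take the Jolissaint--Sobolev algebras $H^\infty_L(\Gamma_i)\subseteq C^*_{\bf red}(\Gamma_i)$ and claim $\phi_{\mathcal{A}}$ exists because $\phi$ is length-Lipschitz. Lipschitz control of lengths does not make the pushforward bounded on these weighted $\ell^2$-type spaces when $\phi$ has infinite fibers: $\phi_*f(\gamma')=\sum_{\phi(\gamma)=\gamma'}f(\gamma)$, and for a hyperbolic $\Gamma_1$ the weights $(1+L_1(\gamma))^{-2s}$ are not summable over exponentially growing subsets, so neither your Lipschitz lemma nor an ``interpolation argument'' produces continuity of $H^\infty_{L_1}(\Gamma_1)\to H^\infty_{L_2}(\Gamma_2)$ for general (non-injective) $\phi$. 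The paper avoids this entirely by taking $\mathcal{A}_i=\ell^1(\Gamma_i)$, which is functorial for \emph{every} group homomorphism; the identification $K_*(\ell^1(\Gamma_i))\cong K_*(C^*_{\bf red}(\Gamma_i))$ comes from Lafforgue's Bost conjecture together with Mineyev--Yu, the hypothesis ``continuous in the reduced $C^*$-norm'' enters only so that Corollary \ref{cornoviko} applies to the reduced completion, and admissibility is checked with \emph{bounded} cocycles, not polynomially bounded ones. Finally, in your $\ell^1$ case the assertion that polynomially bounded cocycles pair continuously with $\ell^1$-chains is false for the unweighted algebra $\ell^1(\Gamma)$; the paper works with the weighted algebras $H^{1,\infty}_L(\Gamma_i)$, which are holomorphically closed in $\ell^1(\Gamma_i)$, proves admissibility there, and only then transfers the conclusion to $\phi_{\ell^1}$ via the isomorphism $K_*^{\rm geo}(pt;\phi_{H^{1,\infty}_L})\cong K_*^{\rm geo}(pt;\phi_{\ell^1})$.
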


The reader should note that a group homomorphism is continuous in the reduced $C^*$-norm if and only if its kernel is amenable. For hyperbolic groups, amenable subgroups are virtually cyclic by \cite[Theorem 38, page 21]{ghysdelaharpe}. In other words, the first stated condition in Theorem \ref{apptonovi} is equivalent to the group homomorphism having virtually cyclic kernel.

The Novikov property using Banach algebras has recently attracted some attention. For instance in \cite{engelnovi}, the Novikov property for $\ell^1(\Gamma)$ was proven for $\Gamma$ being $F_\infty$ and polynomially contractible -- a condition slightly stronger than having polynomially bounded cohomology in $\C$ (as in Definition \ref{pcandpolbdd}). The result stated in Theorem \ref{apptonovi} will in the absolute case imply the $\ell^1$-strong Novikov property for groups with polynomially bounded cohomology in $\C$. {\it The reader should be wary} of the fact that we throughout the paper only consider the geometric version of assembly.

\subsection{Context in Baum's approach to index theory}
\label{baumsapp}
We briefly review Baum's approach to index theory as discussed in \cite{BD}. The starting point for this approach is two realizations of the generalized homology theory $K$-homology. For simplicity, suppose $X$ is a compact manifold. The geometric $K$-homology of $X$, denoted by $K^{\rm geo}_*(X)$, is obtained from cycles of the form $(M, E, f)$ modulo a geometrically defined relation, see \cite{BD} for details; here $M$ is a smooth closed ${\rm spin^c}$-manifold, $E$ is a smooth Hermitian vector bundle over $M$, and $f:M \rightarrow X$ is a continuous map. The analytic $K$-homology of $X$, denoted by $K^{\rm ana}_*(X)$, is the Kasparov group $KK^*(C(X), \C)$ \cite{Kas}. Localization algebras can also be used to obtain an analytic realization of $K$-homology, see \cite{qr10,Yu}. The cycles in Kasparov's realization of $K$-homology are Fredholm modules see \cite[Chapter 8]{HR} for details. There is an isomorphism:
\[ 
\lambda : K_*^{\rm geo}(X) \rightarrow K_*^{\rm ana}(X)
\]
defined explicitly at the level of geometric cycles via $(M, E, f) \mapsto f_*([D_E])$ where $[D_E]\in K_*^{\rm ana}(M)$ denotes the analytic $K$-homology class associated to the Dirac operator twisted by $E$.

Although, $\lambda$ is an isomorphism at the level of classes, its direction as a map on {\it cycles} has important consequences in index theory. A particularly relevant one for the present paper is the definition of Chern characters. While the Chern character can be defined at the level of geometric cycles (upon changing to a model where the cycles contain connection data), there is no definition of the Chern character as a map defined at the level of Fredholm modules. Nevertheless, one has (see \cite[Part 5]{BD} for details) the following diagram:
\begin{equation}
\label{cherndefka}
 \begin{CD} K^{\rm geo}_*(X) @>\lambda >> K^{\rm ana}_*(X)  \\
@V{\rm ch}VV @. \\ 
H_*^{\rm dR}(X)
\end{CD}
\end{equation}
where $H_*^{\rm dR}(X)$ denotes the de Rham homology of $X$. Therefore, using the fact that $\lambda$ is an isomorphism, one can define the Chern character of a class in the analytic $K$-homology group to be the Chern character of its preimage under $\lambda$. Finding an explicit preimage of particular analytic $K$-homology classes is the goal of Baum's approach to index theory. Its solution implies an index formula via the Chern character.  

The Chern characters constructed in the present paper are motivated by the above and as mentioned are similar to constructions in \cite{DGIII}.

\subsection{Notation}
We now list the notation used in the paper. The list is complemented by the list of notation in part I of this series of papers \cite{DGrelI}. As some of it has been introduced above, the reader can skip this section and return to it only when needed.

We are interested in the relative homology of a mapping $h:Y\to X$. It will sometimes be denoted by $[h:Y\to X]$. If $h_i:Y_i\to X_i$, $i=1,2$,  and $g:Y_1\to Y_2$ and $f:X_1\to X_2$ are continuous mappings such that $h_2\circ g=f\circ h_1$ we write 
$$(f,g):[h:Y_1\to X_1]\to [h_2:Y_2\to X_2].$$
We consider group homomorphisms $\phi:\Gamma_1\to \Gamma_2$ of discrete groups. Their classifying spaces are denoted by $B\Gamma_i$ and the associated $\Gamma_i$-space by $E\Gamma_i$. We let $B\phi:B\Gamma_1\to B\Gamma_2$ denote the continuous mapping associated with $\phi$.

We will denote Fr\'echet *-algebras by $\mathcal{A}$. We are interested in the relative setting for a continuous *-homomorphism $\phi_\mathcal{A}:\mathcal{A}_1\to \mathcal{A}_2$. The subscript on $\phi_\mathcal{A}$ is to distinguish it from a group homomorphism $\phi:\Gamma_1\to \Gamma_2$. We sometimes drop the subscript on $\phi_\mathcal{A}$ when the distinction is clear from context. In the cases of interest to us, $\mathcal{A}$ is continuously embedded as a dense $*$-subalgebra of a $C^*$-algebra. Often, but not exclusively, we assume that $\mathcal{A}$ is closed under holomorphic functional calculus in the ambient $C^*$-algebra, whenever we do so it will be explicitly stated. $C^*$-algebras are denoted by $A$ and $B$ and for a $*$-homomorphism $\phi:A\to B$, $C_\phi$ denote its mappings cone, which is the $C^*$-algebra
$$C_\phi:=\{a\oplus b\in A\oplus C_0((0,1],B): \; b(1)=\phi(a)\}.$$ 

A homomorphism of abelian groups is said to be a rational/complex isomorphism if it becomes an isomorphism upon tensoring with $\Q$ and $\C$, respectively.

\section{Preliminaries}
\label{sec:prel}

We review the construction of geometric $K$-homology and noncommutative de Rham homology in this section. We also recall the construction of Chern characters in the absolute setting from \cite{DGIII}. The length of this section is justified by the fact that most results are spread out in the literature or are found only in the $C^*$-algebraic setting.

The results in geometric $K$-homology are well known for coefficients in a $C^*$-algebra, see \cite{BD, Rav, Wal}, also in the relative setting \cite{DeeMappingCone, DGrelI} and to some extent for Banach and Fr\'echet algebras \cite[Appendix]{DG}. We mainly use Fr\'echet algebras where one expects more invariants on its de Rham homology and a relation to $C^*$-algebra $K$-theory via a dense inclusion. Indeed, the construction of a well behaved Chern character on a $C^*$-algebra depends on the choice of a dense subalgebra analogously to the classical constructions in Chern-Weil theory. The need for choosing dense subalgebras is based on work of Johnson \cite{bejohn}. Johnson showed that the standard approaches to homology lack interesting invariants when restricting to $C^*$-algebras. There are more sophisticated tools for dealing with homology of $C^*$-algebras, e.g. Meyer's analytic cyclic homology \cite{meyer} and Puschnigg's local cyclic homology \cite{puschnigg}, which require further technical considerations. For our purposes, the simpler setting of homology for the choice of a dense $*$-subalgebra suffices.

Fr\'echet *-algebras will be denoted by $\mathcal{A}$. The examples we have in mind come from completions of group algebras. The reader is encouraged to keep the following examples in mind. 

\begin{ex}
\label{firstappofrd}
Let $\Gamma$ be a finitely generated discrete group with a length function $L:\Gamma\to \field{R}_{\geq 0}$. The associated Sobolev spaces $H^s_L(\Gamma)$, $s\in \field{R}$ are defined as
$$H^s_L(\Gamma):=\left\{f:\Gamma\to \C: \; \|f\|_{H^s_L(\Gamma)}:=\sqrt{\sum_{\gamma\in \Gamma} (1+L(\gamma))^{2s}|f(\gamma)|^2}<\infty\right\}.$$
The length function is said to be of rapid decay if for some $s$, the identity mapping on the group algebra extends to a continuous inclusion $H^s_L(\Gamma)\to C^*_{\bf red}(\Gamma)$. If $\Gamma$ admits a length function of rapid decay, we say $\Gamma$ has property (RD) and we fix such a length function on $\Gamma$. See more in \cite{Chatterjeeeee,Jollisiantttt}. Hyperbolic groups have property (RD) in word length functions associated with symmetric generating sets, see \cite{harpesart}. An amenable group has property (RD) if and only if it is of polynomial growth, see \cite[Proposition B]{Jollisiantttt}. The Fr\'echet space of interest is the Schwartz space $H^\infty_L(\Gamma):=\cap_{s>0}H^s_L(\Gamma)$. The Schwartz space is a Fr\'echet $*$-algebra continuously embedded as a dense $*$-subalgebra of $C^*_{\bf red}(\Gamma)$ closed under holomorphic functional calculus if $\Gamma$ has property (RD), see \cite[Lemma 1.2.4]{Jollisiantttt}. 

Returning to the relative situation, let $\Gamma_1$ and $\Gamma_2$ be groups with length functions of rapid decay $L_1$ and $L_2$, respectively. A homomorphism $\phi:\Gamma_1\to \Gamma_2$ is said to have polynomial growth if there are $C,p>0$ such that $L_2(\phi(\gamma))\leq CL_1(\gamma)^p$ for all $\gamma\in \Gamma_1$. If $\phi$ has polynomial growth the induced $*$-homomorphism $\phi_{H^\infty_L}:H^\infty_{L_1}(\Gamma_1)\to H^\infty_{L_2}(\Gamma_2)$ is well defined and continuous. 
\end{ex}

\begin{ex}
\label{ageoneess}
For any group $\Gamma$ we can form the Banach $*$-algebra $\ell^1(\Gamma)$. The $\ell^1$-algebra depends functorially on $\Gamma$. Any homomorphism $\phi:\Gamma_1\to \Gamma_2$ induces a continuous $*$-homomorphism $\phi_{\ell^1}:\ell^1(\Gamma_1)\to \ell^1(\Gamma_2)$. The Bost conjecture states that the $\ell^1$-assembly $K_*^\Gamma(\underline{\mathcal{E}}\Gamma)\to K_*(\ell^1(\Gamma))$ is an isomorphism; the Bost conjecture holds as soon as the $\gamma$-element equals $1$ in $KK^{\rm Ban}$. There are to date no known counterexamples to this conjecture and it is known in all the known cases where the Baum-Connes conjecture holds. The reader can see \cite{laffaICM} for further details. 

It is possible to refine $\ell^1(\Gamma)$ after choosing a length function $L$ on $\Gamma$. We define the Banach algebras
$$H^{1,s}_L(\Gamma):=\left\{f:\Gamma\to \C: \; \|f\|_{H^{1,s}_L(\Gamma)}:=\sum_{\gamma\in \Gamma} (1+L(\gamma))^{s}|f(\gamma)|<\infty\right\}.$$
We note that $\Gamma$ has polynomial growth if and only if there are $s,s_0\in \mathbb{R}$ such that $H^{s+s_0}(\Gamma)\subseteq H^{1,s}(\Gamma)$. The converse inclusion $H^{s}(\Gamma)\supseteq H^{1,s}(\Gamma)$ holds for any $s$. Following \cite[Section 2]{jolissatwo} we define the Fr\'echet algebra $H^{1,\infty}_L(\Gamma):=\cap_{s\geq 0} H^{1,s}_L(\Gamma)$. By \cite[Proposition 2.3]{jolissatwo}, $H^{1,\infty}_L(\Gamma)$ is closed under holomorphic functional calculus in $\ell^1(\Gamma)$ so the inclusion $H^{1,\infty}_L(\Gamma)\hookrightarrow \ell^1(\Gamma)$ induces an isomorphism on $K$-theory. In particular, if $\Gamma$ satisfies the Bost conjecture and the Baum-Connes conjecture, the mapping $K_*(H^{1,\infty}_L(\Gamma))\to K_*(C^*_{\bf red}(\Gamma))$ induced by the inclusion is an isomorphism. Finally, if $\phi:\Gamma_1\to \Gamma_2$ is a group homomorphism of at most polynomial growth the induced $*$-homomorphism $\phi_{H^{1,\infty}_L}:H^{1,\infty}_{L_1}(\Gamma_1)\to H^{1,\infty}_{L_2}(\Gamma_2)$ is well defined and continuous. 
\end{ex}

\begin{ex}
A standard construction of dense $*$-subalgebras of a $C^*$-algebra that are closed under holomorphic functional calculus comes from domains of closed derivations. See more in for instance \cite[Section 1.2]{bradavijen}. Let $A$ be a $C^*$-algebra. Assume that $\mathcal{I}$ is a Banach $*$-algebra with a contractive bimodule action of $A$ such that $(aj)^*=j^*a^*$ for $a\in A$, $j\in \mathcal{I}$. A closed densely defined $*$-derivation $\delta:A\to \mathcal{I}$ is a densely defined linear mapping satisfying the Leibniz rule $\delta(ab)=a\delta(b)+\delta(a)b$ and $\delta(a^*)=-\delta(a)^*$. The space $\mathcal{A}_\delta:=\mathrm{Dom}(\delta)$ is a Banach $*$-algebra in the $*$-algebra structure inherited from $A$ and the norm $\|a\|_{\mathcal{A}_\delta}:=\|a\|_A+\|\delta(a)\|_{\mathcal{I}}$. In the case that $\mathcal{I}$ is matrix normed\footnote{That is, when we have specified a choice of norms $(\|\cdot\|_{n,\mathcal{I}})$ on all matrices $M_n(\mathcal{I})$ making the bimodule action of the unitalization $M_n(\tilde{\mathcal{I}})$ on $M_n(\mathcal{I})$ contractive.} and the $A$-action is completely bounded\footnote{That is, when the norm of the bimodule action of $M_n(A)$ on $M_n(\mathcal{I})$ is uniformly bounded in $n$.}, $\mathcal{A}_\delta$ can also be matricially normed by means of the pull back norm along the continuous homomorphism
$$\pi_\delta:a\mapsto \begin{pmatrix} a& 0\\ \delta(a)& a\end{pmatrix}, \quad \pi_\delta:\mathcal{A}_\delta\to \left\{\begin{pmatrix} T_{11}& T_{12}\\ T_{21}& T_{22}\end{pmatrix}: T_{11}, T_{22}\in CB(\mathcal{I}), \; T_{12},T_{21}\in \mathcal{I}\right\} .$$
Here $CB(\mathcal{I})$ denotes the completely bounded multipliers of $\mathcal{I}$ consisting of the set of multipliers $m$ of $\mathcal{I}$ such that the norm of $m\otimes 1_{M_n(\C)}$ acting on $M_n(\mathcal{I})$ is uniformly bounded in $n$. It follows from \cite[Proposition 3.12]{blackcuntz} that $\mathcal{A}_\delta\subseteq A$ is closed under holomorphic functional calculus. Similar ideas were used in \cite{pushing} to construct holomorphically closed subalgebras of $C^*_{\bf red}(\Gamma)$, for a hyperbolic group $\Gamma$, on which a trace is continuous. While dense $*$-subalgebras defined as domains of $*$-derivations are theoretically tractable, their functoriality properties and cocycles are difficult to study.

Such derivations arise from bounded and unbounded Fredholm modules. We denote a bounded Fredholm module on $A$ by $(\pi,\mathcal{H},F)$ and an unbounded by $(\pi,\mathcal{H},D)$, the reader can find the definitions thereof in \cite[Introduction]{GM}, for instance. For an unbounded Fredholm module we set $\mathcal{I}=\mathbb{B}(\mathcal{H})$. For any $*$-algebra $\mathcal{A}\subseteq \mathrm{Lip}_D(A):=\{a\in A:\, a\mathrm{Dom}(D)\subseteq \mathrm{Dom}(D), \;[D,a] \mbox{  bounded}\}$, which is dense in $A$, we obtain a closed $*$-derivation $\delta_{D,\mathcal{A}}:A\dashrightarrow \mathbb{B}(\mathcal{H})$ from closing the $*$-derivation $\mathcal{A}\ni a\mapsto \overline{[D,a]}\in \mathbb{B}(\mathcal{H})$. For a bounded Fredholm module we can take $\mathcal{I}$ to be a symmetrically normed ideal of compact operators (see \cite[Chapter 1.7]{simontrace}) such that $F^*-F,F^2-1\in \mathcal{I}^2$. For any $*$-algebra $\mathcal{A}\subseteq \mathrm{Sum}_\mathcal{I}(A):=\{a\in A:\, \;[F,a]\in \mathcal{I}\}$, we define $\delta_{F,\mathcal{A}}$ as the closure of $\mathcal{A}\ni a\mapsto [F,a]\in \mathcal{I}$ in $A$. This is a densely defined derivation on the $C^*$-algebra closure $\overline{\mathcal{A}}$. By definition, a Fredholm module is $\mathcal{I}$-summable if $F^*-F,F^2-1\in \mathcal{I}^2$ and $a\mapsto [F,a]\in \mathcal{I}$ is densely defined on $A$. For $\mathcal{I}$-summable Fredholm modules one can assume that $\mathcal{A}\subseteq A$ is dense and arrive in the situation of the paragraph above. 

A concrete example of a spectral triple arising on discrete groups comes from length functions. Let $\Gamma$ be a discrete group with a proper length function $L$. We define $D_L$ as the positive self-adjoint multiplication operator on $\ell^2(\Gamma)$ defined from $L$. By definition, $\mathrm{Dom}(D_L^s)=H^s_L(\Gamma)$ for any $s\geq 0$. The associated Connes-Moscovici algebra $\mathcal{B}^\infty_L(\Gamma)$ is associated with the derivation $a\mapsto [D_L,a]$. The operation $\delta_L:=[D_L,\cdot]:\mathbb{B}(\ell^2(\Gamma))\dashrightarrow \mathbb{B}(\ell^2(\Gamma))$ is partially defined with domain $\mathrm{Dom}(\delta_L):=\mathrm{Lip}_D(\mathbb{B}(\ell^2(\Gamma)))$. For $k\geq 1$, we define the Banach algebra $\mathcal{B}^k_L(\Gamma):=\mathrm{Dom}(\delta_L^k)\cap C^*_{\bf red}(\Gamma)$ and the Fr\'echet algebra $\mathcal{B}^\infty_L(\Gamma):=\cap_{k\geq 1} \mathcal{B}^k_L(\Gamma)$. It is easily checked that $\C[\Gamma]\subseteq \mathcal{B}^\infty_L(\Gamma)$, so $\mathcal{B}^\infty_L(\Gamma)\subseteq C^*_{\bf red}(\Gamma)$ is dense. By \cite[Proposition 3.12]{blackcuntz}, $\mathcal{B}^k_L(\Gamma)\subseteq C^*_{\bf red}(\Gamma)$ is closed under holomorphic functional calculus for any $k\in \mathbb{N}_+\cup\{\infty\}$. In fact, if we let $\delta_e\in \ell^2(\Gamma)$ denote the delta function in $e\in\Gamma$, we have
$$\|\delta_L^k(a)\delta_e\|_{\ell^2(\Gamma)}=\|a\|_{H^{k}_L(\Gamma)}.$$
See \cite[Proof of Lemma 6.4]{cmnovikov}. In particular, there is a continuous inclusion $\mathcal{B}^k_L(\Gamma)\subseteq H^{k}_L(\Gamma)$. The Connes-Moscovici algebra is to some extent functorial: let $\phi:\Gamma_1\to \Gamma_2$ be a group inclusion and $L$ a length function on $\Gamma_2$. We pick a transversal $S\subseteq \Gamma_2$ for $\Gamma_2/\Gamma_1$. The inclusion $\phi$ induces a continuous $*$-monomorphism $\phi:\cap_{\gamma \in S}\mathcal{B}^\infty_{\gamma^*L}(\Gamma_1)\to \mathcal{B}^\infty_L(\Gamma_2)$, where $\mathcal{B}^\infty_{\gamma^*L}(\Gamma_1)$ is defined as above as the infinite domain of the derivation that the multiplication operator $\gamma^*L=L(\gamma\,\cdot):\Gamma_1\to \mathbb{R}_+$ defines.

\end{ex}

\subsection{Geometric $K$-homology and Fr\'echet algebra coefficients}

We will make use of geometric models for $K$-homology to place $K$-theory and $K$-homology on an equal, geometric footing. For a $C^*$-algebra $A$, the geometric $K$-homology of a space $X$ with coefficients in $A$, $K_*^{\rm geo}(X;A)$ has been well studied, see for instance \cite{Rav, Wal}. It is a well known fact that the map (see Theorem \ref{closedandkhom}) $\lambda:K_*^{\rm geo}(X;A)\to KK_*(C(X),A)$ is an isomorphism for a finite $CW$-complex $X$, the proof goes as in \cite{BHS}. Whenever $X$ is a locally finite $CW$-complex, the same mapping is an isomorphism to $\varinjlim_{Y\subseteq X\mbox{\scriptsize compact}} KK_*(C(Y),A)$ -- the compactly supported version of $KK_*(C_0(X),A)$. 

The construction of $K_*^{\rm geo}(X;\mathcal{A})$ for a Fr\'echet $*$-algebra $\mathcal{A}$ is carried out similarly and many properties continue to hold for Fr\'echet $*$-algebras. The main difference is that there is in general no obvious relation to the $K$-theory of $\mathcal{A}$ unless $\mathcal{A}$ is a Banach $*$-algebra (see \cite[Appendix]{DG}) or more generally if $\mathcal{A}$ is closed under holomorphic functional calculus in a Banach $*$-algebra (see Theorem \ref{closedandkhom}). 

A locally trivializable bundle $\mathcal{E}_{\mathcal{A}}\to X$ of finitely generated projective $\mathcal{A}$-modules will be referred to as an $\mathcal{A}$-bundle. If $X$ is a manifold, we can up to isomorphism assume that $\mathcal{E}_\mathcal{A}$ is smooth, see \cite[Theorem 3.14]{SchL2}. The prototypical example of such a bundle is the Mischchenko bundle, see Equation \eqref{michscdef} on page \pageref{michscdef}.

\begin{define}
Let $X$ be a topological space. A geometric cycle on $X$ is a triple $(M,\mathcal{E}_\mathcal{A},f)$ where
\begin{enumerate}
\item $M$ is a closed spin$^c$-manifold;
\item $\mathcal{E}_\mathcal{A}\to M$ is an $\mathcal{A}$-bundle;
\item $f:M\to X$ is a continuous mapping.
\end{enumerate}
If $(W,\mathcal{E}_\mathcal{A},f)$ is as above but $W$ is allowed to have boundary, we say that $(W,\mathcal{E}_\mathcal{A},f)$ is a geometric cycle with boundary or a bordism in geometric $K$-homology. We write $\partial (W,\mathcal{E}_\mathcal{A},f):=(\partial W,\mathcal{E}_\mathcal{A}|_{\partial W},f|_{\partial W})$.
\end{define}

The set of isomorphism classes of geometric cycles on $X$ with coefficients in $\mathcal{A}$ forms an abelian semigroup under disjoint union of cycles. This set comes equipped with a Baum-Douglas relation: the equivalence relation generated by the equivalence relations disjoint union/direct sum, bordism and vector bundle modification. See more in \cite{BD}. We define $K_*^{\rm geo}(X;\mathcal{A})$ as the set of equivalence classes. It is a $\mathbb{Z}/2$-graded set, graded by the dimension of the cycle modulo $2$. The disjoint union operation makes $K_*^{\rm geo}(X;\mathcal{A})$ into a $\mathbb{Z}/2$-graded abelian group. The group $K_*^{\rm geo}(X;\mathcal{A})$ is in general difficult to compute, here we give some cases where it is computable.

\begin{theorem}
\label{closedandkhom}
Let $\mathcal{A}$ be a Fr\'echet $*$-algebra.
\begin{enumerate}
\item If $\mathcal{A}=A$ is a $C^*$-algebra, then for any finite $CW$-complex $X$ the following mapping is an isomorphism:
\begin{align*}
\lambda:K_*^{\rm geo}(X;A)&\to KK_*(C(X),A), \\
& (M,\mathcal{E}_\mathcal{A},f)\mapsto [f]\otimes_{C(M)}[[\mathcal{E}_\mathcal{A}]]\otimes_{C(M)} [D_M],
\end{align*}
where $[f]\in KK_0(C(X),C(M))$ denotes the $*$-homomorphism associated with $f$, $[[\mathcal{E}_\mathcal{A}]]\in KK_0(C(M),C(M)\otimes \mathcal{A})$ the bimodule associated with $\mathcal{E}_\mathcal{A}$ and $[D_M]\in KK_{\mathrm{dim}(M)}(C(M),\mathbb{C})$ the spin$^c$-Dirac operator on $M$. More generally, for a \emph{locally finite $CW$-complex} $X$, 
$$\lambda:K_*^{\rm geo}(X;A)\xrightarrow{\sim} KK_*^{\rm c}(C_0(X),A):=\varinjlim KK(C(Y),A),$$
where the direct limit is taken over all compact subsets $Y\subseteq X$. 
\item If $\mathcal{A}$ is a Banach $*$-algebra, the natural mapping $K_*(\mathcal{A})\to K_*^{\rm geo}(pt;\mathcal{A})$ that maps a finitely generated projective module $\mathcal{E}_\mathcal{A}$ to the cycle $(pt,\mathcal{E}_\mathcal{A})$ is an isomorphism.
\item If $\mathcal{A}\subseteq \tilde{\mathcal{A}}$ is a dense inclusion of Fr\'echet $*$-algebras closed under holomorphic functional calculus, the functorially associated mapping $K_*^{\rm geo}(X,\mathcal{A})\to K_*^{\rm geo}(X,\tilde{\mathcal{A}})$ is an isomorphism. In particular, if $\tilde{\mathcal{A}}=A$ is a $C^*$-algebra and $X$ a finite $CW$-complex, $K_*^{\rm geo}(X;\mathcal{A})\cong KK_*(C(X),A)$.
\end{enumerate}
\end{theorem}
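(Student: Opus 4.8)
The plan is to prove the three parts in order, reducing each to standard facts about Baum--Douglas cycles combined with a single nontrivial input per part.

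For part (1), the strategy is to construct an inverse to $\lambda$ at the level of classes rather than cycles, exactly as in \cite{BHS} but with coefficients. First I would check that $\lambda$ is well defined, i.e. that it respects the Baum--Douglas relations: direct sum/disjoint union is clear from additivity of Kasparov product, bordism invariance follows from the fact that $[D_W]$ restricted to the boundary vanishes in $KK$ (a relative index argument, or the standard Baum--Douglas bordism lemma adapted to module coefficients), and vector bundle modification is the usual argument that the Dirac operator on the sphere bundle twisted by the Bott bundle computes back the original class (this uses that the relevant pushforward along a ${\rm spin}^c$ sphere bundle is an isomorphism, which is insensitive to the coefficient algebra $A$ since it is a statement about the topology of $M$). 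For surjectivity and injectivity one invokes that any class in $KK_*(C(X),A)$ is, by Kasparov's theory, represented by data that can be smoothed and geometrized over a finite $CW$-complex; the cleanest route is to compare both functors $X\mapsto K_*^{\rm geo}(X;A)$ and $X\mapsto KK_*(C(X),A)$ as homology theories on finite $CW$-complexes, verify that $\lambda$ is a natural transformation commuting with the boundary maps of the Mayer--Vietoris/long exact sequences, and note that both agree on a point (where the geometric side is $K_*(A)$ by the easy half of part (2) and the analytic side is $KK_*(\mathbb{C},A)=K_*(A)$). A five-lemma induction over cells then gives the isomorphism for all finite $CW$-complexes, and the locally finite case follows by taking the direct limit over compact subcomplexes, using that $KK^{\rm c}(C_0(X),A)$ is defined precisely as that colimit and that geometric cycles have compact image.

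For part (2), the map $K_*(\mathcal{A})\to K_*^{\rm geo}(pt;\mathcal{A})$ sending $[\mathcal{E}_\mathcal{A}]$ to $(pt,\mathcal{E}_\mathcal{A})$ (with $pt$ a point, viewed as a $0$-dimensional ${\rm spin}^c$-manifold) must be shown to be a well-defined group isomorphism. Well-definedness on $K_0$ amounts to checking that stably isomorphic projective modules give bordant cycles, which is immediate, and the odd part is handled by the suspension/index cycle representing $K_1(\mathcal{A})$ via the circle, or by noting $K_*^{\rm geo}(pt;\mathcal{A})$ is $\mathbb{Z}/2$-graded with the odd part generated by odd-dimensional ${\rm spin}^c$-manifolds over a point. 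Injectivity and surjectivity: any cycle $(M,\mathcal{E}_\mathcal{A})$ over a point is bordant, by iterated vector bundle modification and the normal bordism that trivializes a ${\rm spin}^c$-manifold's image, to a cycle of the form $(pt,\mathcal{E}'_\mathcal{A})$ or $(S^1,\mathcal{E}'_\mathcal{A})$; the key point where being a \emph{Banach} $*$-algebra enters is that one needs finitely generated projective modules over $C(M)\otimes\mathcal{A}$ (or $C(M,\mathcal{A})$) to behave well under the clutching and pushforward constructions and one needs $K_*(C(M)\widehat{\otimes}\mathcal{A})$ to make sense and satisfy a Künneth-type reduction to $K_*(\mathcal{A})$ — this is exactly the content of \cite[Appendix]{DG}, which I would cite. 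I expect this is where care is needed but no genuinely new idea.

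Part (3) is the crux and I expect it to be the main obstacle, though the tools are available. Given a dense holomorphically closed inclusion $\mathcal{A}\subseteq\tilde{\mathcal{A}}$, the induced map on geometric $K$-homology is defined cycle-wise by $(M,\mathcal{E}_\mathcal{A},f)\mapsto(M,\mathcal{E}_\mathcal{A}\otimes_\mathcal{A}\tilde{\mathcal{A}},f)$. The plan is: (i) holomorphic closure of $\mathcal{A}$ in $\tilde{\mathcal{A}}$ implies holomorphic closure of the smooth section algebra of an $\mathcal{A}$-bundle in that of the corresponding $\tilde{\mathcal{A}}$-bundle over any compact $M$ — this is a local statement reduced to the trivial bundle, where it follows from the fact that holomorphic closure is inherited by matrix algebras and by $C^\infty(M)$-tensor completions (one must be slightly careful with the Fréchet topology on $C^\infty(M,\mathcal{A})$ versus $C^\infty(M,\tilde{\mathcal{A}})$; the relevant stability result is in the literature on smooth subalgebras, e.g. via the fact that differentiable/smooth algebras are closed under holomorphic functional calculus in their completions). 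Consequently the projective module functor $\mathcal{E}\mapsto\mathcal{E}\otimes\tilde{\mathcal{A}}$ is a bijection on isomorphism classes of $\mathcal{A}$- vs. $\tilde{\mathcal{A}}$-bundles over any fixed compact $M$. (ii) Since the Baum--Douglas relations (disjoint union, bordism, vector bundle modification) are formulated entirely in terms of ${\rm spin}^c$-manifolds, continuous maps, and the bundle data, and since step (i) gives a natural bijection on bundle data compatible with restriction to boundaries and with vector bundle modification (the Bott bundle is an ordinary vector bundle, so tensoring with it commutes with $-\otimes_\mathcal{A}\tilde{\mathcal{A}}$), the induced map on equivalence classes is a bijection — i.e. the functor on cycles is essentially an equivalence, inducing an isomorphism on $K_*^{\rm geo}$. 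The hard part is genuinely step (i): making precise that ``closed under holomorphic functional calculus'' passes to the section algebras with the correct topologies, uniformly enough to conclude the bundle classification statement. The final sentence of part (3) is then immediate: combine (3) with part (1) applied to $\tilde{\mathcal{A}}=A$ and $X$ a finite $CW$-complex.
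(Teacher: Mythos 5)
Your proposal is correct and follows essentially the same route as the paper: parts (1) and (2) rest on the standard comparison-of-homology-theories/BHS-style argument and the point computation from \cite[Appendix]{DG} (the paper simply cites \cite[Section 2.3]{Wal} and \cite[Appendix]{DG} for these), and your part (3) is exactly the paper's argument, namely that the dense holomorphically closed inclusion $\mathcal{A}\subseteq\tilde{\mathcal{A}}$ induces a bijection $\mathcal{E}_\mathcal{A}\mapsto\mathcal{E}_\mathcal{A}\otimes_\mathcal{A}\tilde{\mathcal{A}}$ on isomorphism classes of bundles, compatibly with restriction to boundaries and vector bundle modification, so the sets of cycles and the Baum--Douglas relations coincide for the two coefficient algebras.
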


\begin{proof}
Part 1 is proven in \cite[Section 2.3]{Wal}. Part 2 is proven in \cite[Appendix]{DG}. To prove 3, we note that under the given assumptions, for any compact space $Y$, the inclusion $C(Y,\mathcal{A})\subseteq C(Y,\tilde{\mathcal{A}})$ is dense and closed under holomorphic functional calculus. In particular, the inclusion $C(Y,\mathcal{A})\subseteq C(Y,\tilde{\mathcal{A}})$ induces an isomorphism between the monoids of isomorphism classes of finitely generated projective modules over $C(Y,\mathcal{A})$ and $C(Y,\tilde{\mathcal{A}})$, respectively. As such, any $\tilde{\mathcal{A}}$-bundle on a compact space $Y$ is isomorphic to one of the form $\mathcal{E}_{\mathcal{A}}\otimes_\mathcal{A}\tilde{\mathcal{A}}$ and $\mathcal{E}_\mathcal{A}$ is determined uniquely up to isomorphism of $\mathcal{A}$-bundles. Therefore, the sets of isomorphism classes of geometric cycles on $X$ are the same for the two coefficients $\mathcal{A}$ and $\tilde{\mathcal{A}}$, respectively.
\end{proof}

\begin{remark}
If there is a mapping $\phi:\mathcal{A}_1\to \mathcal{A}_2$ that induces an isomorphism on $K$-theory, and $\mathcal{A}_2$ is holomorphically closed in a Banach $*$-algebra, Theorem \ref{closedandkhom}.3 shows that the mapping $K_*(\mathcal{A}_1)\to K_*^{\rm geo}(pt;\mathcal{A}_1)$ from Theorem \ref{closedandkhom}.2 is an injection.
\end{remark}

\begin{remark}
\label{indasdef}
For a cycle $(M,\mathcal{E}_A)$ defining a class in $K_*(pt; A)$ where $A$ is a $C^*$-algebra, we let $\mathrm{ind}_{\rm AS}(M,\mathcal{E}_A)\in K_*(A)$ denote the image of $\lambda(M,\mathcal{E}_A)\in KK_*(\mathbb{C},A)$ under $KK_*(\mathbb{C},A)\cong K_*(A)$. More concretely, $\mathrm{ind}_{\rm AS}(M,\mathcal{E}_A)\in K_*(A)$ is the $A$-valued index of an $A$-linear Dirac operator $D_{\mathcal{E}_A}^M$ acting on sections of $S_M\otimes \mathcal{E}_A\to M$, where $S_M\to M$ denotes the complex spinor bundle that the spin$^c$-structure defines.
\end{remark}

\begin{theorem}
\label{sameasinappbutfre}
Let $\mathcal{A}$ be a Fr\'echet $*$-algebra. The functor $X\mapsto K_*^{\rm geo}(X;\mathcal{A})$ is a generalized homology theory on the category of locally finite CW-complexes. In particular, for any closed subspace $Y\subseteq X$ there is a six term exact sequence
\begin{center}
$\begin{CD}
K_*^{\rm geo}(Y;\mathcal{A}) @>>> K_*^{\rm geo}(X;\mathcal{A}) @>>> K_*^{\rm geo}(X,Y;\mathcal{A}) \\
@AAA @.  @VVV  \\
K_{*-1}^{\rm geo}(X,Y;\mathcal{A}) @<<<K_{*-1}^{\rm geo}(X;\mathcal{A}) @<<< K_{*-1}^{\rm geo}(Y;\mathcal{A})  \\
\end{CD}$
\end{center}
\end{theorem}

Here the group $K_*^{\rm geo}(X,Y;\mathcal{A})$ is a relative group. It was studied for $C^*$-algebras in \cite[Section 2.1]{Wal}. We use these groups only for $\mathcal{A}=\mathbb{C}$; see specifics below in Subsection \ref{relagesubsec}. The proof of Theorem \ref{sameasinappbutfre} goes as in \cite[Appendix]{DG}.

\subsection{Relative geometric $K$-homology}
\label{relagesubsec}

We recall the central points in the construction of relative geometric $K$-homology of continuous mappings $h:Y\to X$ and continuous $*$-homomorphisms $\phi:\mathcal{A}_1\to \mathcal{A}_2$ in this subsection. The case that $h$ is an inclusion was considered in \cite{BHS,Wal}. The details of the standard generalization to any continuous map are given in \cite{DGrelI}. The case that $\phi$ is a $*$-homomorphism between $C^*$-algebras was studied in \cite{DeeRZ}. The example we have in mind arises from a group homomorphism $\phi:\Gamma_1\to \Gamma_2$ which induces a continuous mapping $B\phi:B\Gamma_1\to B\Gamma_2$ and, for suitable Fr\'echet algebra completions $\mathcal{A}_i\supseteq \mathbb{C}[\Gamma_i]$, a continuous $*$-homomorphism $\phi:\mathcal{A}_1\to \mathcal{A}_2$. Given mappings $h_i:Y_i\to X_i$, $i=1,2$, $f:Y_1\to Y_2$ and $g:X_1\to X_2$ such that $h_2\circ f=g\circ h_1$ we write 
$$(f,g):[h_1:Y_1\to X_1]\to [h_2:Y_2\to X_2].$$ 

\subsubsection{Relative $K$-homology of continuous mappings}

The following definition can be found in \cite[Definition 3.1]{DGrelI}. The reader should recall from \cite[Definition 3.2]{DGrelI} that a regular domain in a closed manifold is an open subset with smooth boundary.

\begin{define}
Let $h:Y\to X$ be a continuous mapping of topological spaces. A relative geometric cycle for $h$ is a collection $(W,E,(f,g))$ where 
\begin{enumerate}
\item $W$ is a spin$^c$-manifold with boundary;
\item $E\to W$ is a vector bundle;
\item $(f,g):[i:\partial W\to W]\to [h:Y\to X]$ is a continuous mapping, where $i$ denotes the boundary inclusion.
\end{enumerate}
If $((Z,W),E,(f,g))$ is a collection, with $Z$ being a spin$^c$-manifold with boundary and $W\subseteq \partial Z$ being a regular domain, such that $(Z,E,f)$ is a geometric cycle with boundary for $X$ and $(\partial Z\setminus W^\circ ,E|_{\partial Z\setminus W^\circ},g)$ is a geometric cycle with boundary for $Y$, we say that $((Z,W),E,(f,g))$ is a relative geometric cycle with boundary. We write $\partial ((Z,W),E,(f,g)):=(W,E|_{W},(f|_{W},g|_{\partial W}))$.
\end{define}

The set of isomorphism classes of relative cycles for $h$ forms an abelian semigroup under disjoint union. This set is equipped with a Baum-Douglas relation: the equivalence relation generated by the equivalence relations disjoint union/direct sum, bordism and vector bundle modification. We let $K_*^{\rm geo}(h)$, or sometimes $K_*^{\rm geo}(h:Y\to X)$, denote the set of equivalence classes. This set forms a $\mathbb{Z}/2$-graded abelian group where the grading comes from the dimension of (the connected components of the manifold in) the cycle modulo two and the group operation is given by disjoint union. For details, see \cite[Section 3]{DGrelI}. 

\begin{theorem} (Theorem 3.7 of \cite{DGrelI}) \\
\label{sameasinaabove}
Let $h:Y\to X$ be a continuous mapping. The abelian group $K_*^{\rm geo}(h)$ depends functorially on $[h:Y\to X]$ and fits into a long 2-periodic exact sequence
$$\cdots\xrightarrow{\delta} K_*^{\rm geo}(Y)\xrightarrow{h_*} K_*^{\rm geo}(X)\xrightarrow{r} K_*^{\rm geo}(h)\xrightarrow{\delta} K_{*-1}^{\rm geo}(Y)\xrightarrow{h_*} \cdots$$
Here $r:K_*^{\rm geo}(X)\to K_*^{\rm geo}(h)$ maps $(M,E,f)\mapsto (M,E,(f,\emptyset))$ and $\delta:K_*^{\rm geo}(h)\to K_{*-1}^{\rm geo}(Y)$ maps $(W,E,(f,g))\mapsto (\partial W,E|_{\partial W},g)$.
\end{theorem}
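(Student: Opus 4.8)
The plan is to establish the long exact sequence by a standard comparison of Baum--Douglas type $K$-homology groups, mimicking the proofs for the inclusion case in \cite{BHS, Wal} and the general case in \cite{DGrelI}. First I would verify that the two maps $r$ and $\delta$ are well defined on equivalence classes: this amounts to checking that they respect disjoint union/direct sum, bordism, and vector bundle modification. For $r$, one sends a cycle $(M,E,f)$ on $X$ to the relative cycle $(M,E,(f,\emptyset))$ with empty boundary piece, and a bordism on $X$ produces a relative bordism with empty boundary data, so well-definedness is immediate. For $\delta$, sending $(W,E,(f,g))$ to $(\partial W, E|_{\partial W}, g)$, one must check that a relative bordism $((Z,W),E,(f,g))$ restricts on the ``free'' part $\partial Z \setminus W^\circ$ to an honest bordism of geometric cycles for $Y$; this is exactly the content of the boundary condition in the definition of a relative cycle with boundary, so it works, and similarly vector bundle modification commutes with restriction to the boundary.

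Next I would prove exactness at each of the three spots. Exactness at $K_*^{\rm geo}(h)$: the composite $r\circ h_*$ sends $(M,E,f)$ (thought of as a cycle on $Y$) to the relative cycle whose underlying manifold is the closed manifold $M$ regarded as having empty boundary but with its map factoring through $X$; one shows this bounds by taking the cylinder $M\times[0,1]$, so $r\circ h_* = 0$. Conversely, if a relative cycle $(W,E,(f,g))$ maps to zero in $K_*^{\rm geo}(h)$ via $r$, i.e.\ it is relatively bordant to something coming from $X$, one uses the bordism to glue $W$ to a cylinder over $\partial W$ and thereby exhibit the class as $r(\text{cycle on }X)$ up to an element of the image of $h_*$ --- the usual ``turning a manifold with boundary into a closed manifold by capping'' argument. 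Exactness at $K_*^{\rm geo}(X)$: $h_*$ followed by $r$ is the zero map as just shown, and if $(M,E,f)$ on $X$ maps to zero in $K_*^{\rm geo}(h)$, the relative bordism realizing this has as its free boundary piece a cycle on $Y$ whose pushforward to $X$ is bordant to $(M,E,f)$, giving a preimage under $h_*$. Exactness at $K_*^{\rm geo}(Y)$: $\delta \circ r = 0$ since $r$ produces cycles with empty boundary, and given a class $[(N,E,g)]$ in $\ker h_*$, a bordism $(Z, E, f)$ in $X$ with $\partial Z$ mapping to $N$ via (a cycle bordant to) $h\circ g$ assembles into a relative cycle $(Z,E,(f,g))$ whose $\delta$-image is $[(N,E,g)]$.

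Two-periodicity follows from the same Bott-type vector bundle modification isomorphism already present in the absolute theory (Theorem \ref{sameasinappbutfre}), applied componentwise; functoriality in $[h:Y\to X]$ is checked by observing that a commuting square $(f,g):[h_1]\to[h_2]$ induces pushforward of relative cycles compatible with all three maps $h_*$, $r$, $\delta$, and the resulting ladder of long exact sequences commutes by inspection.

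The main obstacle I anticipate is the gluing/decomposition argument underlying exactness at $K_*^{\rm geo}(h)$ and at $K_*^{\rm geo}(Y)$: one must carefully manage the corners arising when a manifold with boundary $W$ is glued along $\partial W$ to another manifold-with-boundary coming from a bordism, ensuring the ${\rm spin}^c$-structures, the bundles $E$, and the reference maps all match up on the overlap and that the glued object is a legitimate geometric cycle (possibly after smoothing corners). This is routine in the inclusion case and is handled in \cite{DGrelI}, so here it suffices to indicate that the same construction goes through verbatim, the only novelty being that the ``boundary reference map'' now has two components $(f,g)$ rather than a single inclusion-compatible map; none of the gluing constructions are sensitive to this.
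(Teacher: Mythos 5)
Your proposal is correct and is essentially the paper's own route: the paper gives no independent argument but refers to the inclusion case in \cite{BHS} and to Theorem 3.7 of \cite{DGrelI}, asserting that those proofs (well-definedness of $r$ and $\delta$ on classes, cylinder and capping bordisms for exactness, periodicity via vector bundle modification, functoriality by pushforward of cycles) generalize without major changes, which is exactly what you sketch. The only blemishes are bookkeeping ones: the paragraph you label as exactness at $K_*^{\rm geo}(h)$ in fact begins by proving $r\circ h_*=0$ (which belongs to exactness at $K_*^{\rm geo}(X)$), the hypothesis in its converse should be $\delta[(W,E,(f,g))]=0$ rather than ``maps to zero via $r$'' (the capping construction you invoke is the right one for $\ker\delta\subseteq\mathrm{im}\,r$), and the easy identity $h_*\circ\delta=0$ --- immediate since $(\partial W,E|_{\partial W},h\circ g)$ bounds via $(W,E,f)$ itself because $f|_{\partial W}=h\circ g$ --- goes unmentioned.
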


We refer the reader to the proof in \cite{BHS} where the case of $h$ being an inclusion was considered. As mentioned in \cite{DGrelI}, the proofs generalize without major changes. It was proven in \cite{DGrelI} that if $X$ and $Y$ are locally finite $CW$-complexes, $K_*^{\rm geo}(h:Y\to X)$ is isomorphic to the $K$-theory of the mapping cone of the functorially associated $*$-homomorphism $h_L:C^*_L(Y)\to C^*_L(X)$ at the level of localization algebras. The proof is a straight forward application of the five lemma.

\subsubsection{Geometric models of relative $K$-theory}
\label{remarkongengeomodel}

There are several models for relative $K$-theory of a $*$-homomorphism $\phi:\mathcal{A}_1\to \mathcal{A}_2$. The model we discuss is geometric and comes from a particular relative version of geometric $K$-homology of a point; it is well suited to the geometric assembly map discussed above. 

\begin{define}
Let $X$ be a topological space and $\phi:\mathcal{A}_1\to \mathcal{A}_2$ a continuous $*$-homomorphism of Fr\'echet algebras. A cycle for $K_*^{\rm geo}(X;\phi)$ is a collection $(W, (E_{\mathcal{A}_2}, F_{\mathcal{A}_1}, \alpha),f)$ where
\begin{enumerate}
\item $W$ is a smooth, compact spin$^c$-manifold with boundary;
\item $E_{\mathcal{A}_2}$ is a $\mathcal{A}_2$-bundle over $W$;
\item $F_{\mathcal{A}_1}$ is a $\mathcal{A}_1$-bundle over $\partial W$;
\item $\alpha: E_{\mathcal{A}_2}|_{\partial W} \rightarrow F_{\mathcal{A}_1}\otimes_{\phi} \mathcal{A}_2$ is an isomorphism of $\mathcal{A}_2$-bundles;
\item $f:W\to X$ is a continuous mapping.
\end{enumerate}
\end{define} 

The set of isomorphism classes of cycles for $K_*^{\rm geo}(X;\phi)$ forms an abelian semigroup under disjoint union. Again, this set comes equipped with a Baum-Douglas relation (generated by disjoint union/direct sum, bordism and vector bundle modification) and the set of equivalence classes is denoted by $K_*^{\rm geo}(X;\phi)$. The set $K_*^{\rm geo}(X;\phi)$ is a $\mathbb{Z}/2$-graded abelian group. The group $K_*^{\rm geo}(X;\phi)$ depends covariantly on the topological space $X$ in the obvious way and the $*$-homomorphism $\phi$ in the following way. If $\phi':\mathcal{A}'_1\to \mathcal{A}'_2$ is another $*$-homomorphism and $\alpha_i:\mathcal{A}_i\to \mathcal{A}'_i$, $i=1,2$, are $*$-homomorphisms such that $\alpha_2\circ \phi=\phi'\circ \alpha_1$ then there is an induced mapping $(\alpha_1,\alpha_2)_*: K_*^{\rm geo}(X;\phi)\to K_*^{\rm geo}(X;\phi')$. We are as mentioned above mainly interested in the case that $X$ is a point. The next theorem summarizes the main properties of $K_*^{\rm geo}(X;\phi)$, the reader is referred to \cite[Theorem 4.21]{DeeRZ} for a proof of the result.

\begin{theorem}
\label{lesforphi}
Let $X$ be a topological space and $\phi:\mathcal{A}_1\to \mathcal{A}_2$ a continuous $*$-homomorphism of Fr\'echet algebras. The mapping $\phi_*:K_*^{\rm geo}(X;\mathcal{A}_1)\to K_*^{\rm geo}(X;\mathcal{A}_2)$ fits into a long 2-periodic exact sequence
$$\cdots\xrightarrow{\delta} K_*^{\rm geo}(X;\mathcal{A}_1)\xrightarrow{\phi_*} K_*^{\rm geo}(X;\mathcal{A}_2)\xrightarrow{r} K_*^{\rm geo}(X;\phi)\xrightarrow{\delta} K_{*-1}^{\rm geo}(X;\mathcal{A}_1)\xrightarrow{\phi_*} \cdots$$
Here $r:K_*^{\rm geo}(X;\mathcal{A}_2)\to K_*^{\rm geo}(X;\phi)$ maps $(M,\mathcal{E}_\mathcal{A},f)\mapsto (M,(\mathcal{E}_\mathcal{A},\emptyset,\emptyset),f)$ and $\delta:K_*^{\rm geo}(X;\phi)\to K_{*-1}^{\rm geo}(X;\mathcal{A}_1)$ maps $(W, (E_{\mathcal{A}_2}, F_{\mathcal{A}_1}, \alpha),f)\mapsto (\partial W,F_{\mathcal{A}_1},f |_{\partial W})$.
\end{theorem}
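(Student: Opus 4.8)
The statement to be proven is the existence of a long exact sequence relating $K_*^{\rm geo}(X;\mathcal{A}_1)$, $K_*^{\rm geo}(X;\mathcal{A}_2)$ and $K_*^{\rm geo}(X;\phi)$ with the indicated structure maps $r$ and $\delta$. Since the reference \cite[Theorem 4.21]{DeeRZ} handles the $C^*$-algebra case, the plan is to adapt that argument to Fr\'echet $*$-algebras, where the key point is that none of the steps use $C^*$-specific features (completeness in a $C^*$-norm, positivity, functional calculus) — only that $\mathcal{A}_1,\mathcal{A}_2$ are Fr\'echet $*$-algebras, $\phi$ is continuous, and one can form tensor products $E_{\mathcal{A}_1}\otimes_\phi\mathcal{A}_2$ of finitely generated projective bundles. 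First I would verify that $r$ and $\delta$ as defined on cycles descend to the Baum–Douglas equivalence classes: this is a routine check that the three generating relations (disjoint union/direct sum, bordism, vector bundle modification) are respected, using that a bordism of $\mathcal{A}_2$-cycles maps to a bordism of $\phi$-cycles with empty $\mathcal{A}_1$-data, and conversely $\partial$ of a bordism of $\phi$-cycles restricts to a bordism of $\mathcal{A}_1$-cycles on the boundary components carrying the $F_{\mathcal{A}_1}$-bundle.

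The heart of the argument is exactness at the three spots. I would proceed clockwise. \emph{Exactness at $K_*^{\rm geo}(X;\mathcal{A}_2)$}: $r\circ\phi_*=0$ because for a closed manifold $(M,\phi_*\mathcal{E}_{\mathcal{A}_1},f)$ the cycle $M\times[0,1]$ with the constant bundle and the isomorphism $\alpha=\mathrm{id}$ over one boundary component and $\emptyset$-data over the other is a relative bordism exhibiting $r(\phi_*[M,\mathcal{E}_{\mathcal{A}_1},f])=0$; conversely if $r[M,E_{\mathcal{A}_2},f]=0$ there is a relative bordism $((Z,V),\ldots)$ whose boundary reconstructs $(M,E_{\mathcal{A}_2})$ together with $\mathcal{A}_1$-bundle data on the complementary boundary, and reading off the $\mathcal{A}_1$-index data produces a class in $K_*^{\rm geo}(X;\mathcal{A}_1)$ mapping to $[M,E_{\mathcal{A}_2},f]$. \emph{Exactness at $K_*^{\rm geo}(X;\phi)$}: $\delta\circ r=0$ since an $r$-cycle has empty boundary data; and if $\delta[W,(E_{\mathcal{A}_2},F_{\mathcal{A}_1},\alpha),f]=0$, a null-bordism $N$ of $(\partial W,F_{\mathcal{A}_1})$ can be used to cap off $W$ along its boundary (using $\alpha$ to glue $E_{\mathcal{A}_2}$ to $F_{\mathcal{A}_1}\otimes_\phi\mathcal{A}_2$ extended over $N$), producing a closed $\mathcal{A}_2$-manifold whose $r$-image is the given class. \emph{Exactness at $K_*^{\rm geo}(X;\mathcal{A}_1)$}: $\phi_*\circ\delta=0$ because for a $\phi$-cycle $W$, the bundle $F_{\mathcal{A}_1}$ on $\partial W$ has $\phi_*F_{\mathcal{A}_1}\cong E_{\mathcal{A}_2}|_{\partial W}$ via $\alpha$, and $E_{\mathcal{A}_2}|_{\partial W}$ bounds the $\mathcal{A}_2$-bundle $E_{\mathcal{A}_2}$ over $W$, so $\phi_*\delta[W,\ldots]$ bounds; conversely if $\phi_*[M,F_{\mathcal{A}_1},f]=0$ in $K_*^{\rm geo}(X;\mathcal{A}_2)$, pick a bordism $(W,E_{\mathcal{A}_2})$ with $\partial W=M$ and $E_{\mathcal{A}_2}|_M\cong\phi_*F_{\mathcal{A}_1}$; this $W$ together with $F_{\mathcal{A}_1}$ and the chosen isomorphism is a $\phi$-cycle with $\delta$-image $[M,F_{\mathcal{A}_1},f]$.

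I expect the main obstacle to be bookkeeping rather than any genuine new idea: one must handle the fact that a general $\mathcal{A}_2$-cycle is a disjoint union of closed manifolds of possibly different dimensions, so the boundary of a relative bordism decomposes into a part carrying $\mathcal{A}_1$-data and a part carrying only $\mathcal{A}_2$-data, and one has to check the isomorphisms $\alpha$ glue coherently across all the cap-off constructions. The one place where Fr\'echet-ness (as opposed to $C^*$-ness) could in principle matter is the assertion that a finitely generated projective $\mathcal{A}_i$-bundle on a manifold can be chosen smooth and that $\phi$-pushforward of bundles behaves well; both are available by \cite[Theorem 3.14]{SchL2} and the fact that $-\otimes_\phi\mathcal{A}_2$ is an additive functor on finitely generated projective modules, so no difficulty arises there. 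Since each of these verifications is a direct transcription of the corresponding step in \cite{DeeRZ}, I would present the proof by indicating the dictionary to that reference and spelling out only the cap-off and bordism constructions that make the structure maps $r$ and $\delta$ fit together, exactly as the excerpt already does for the companion Theorems \ref{sameasinappbutfre} and \ref{sameasinaabove}.
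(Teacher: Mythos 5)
Your plan is correct and follows essentially the same route as the paper, which proves Theorem \ref{lesforphi} simply by referring to the mapping-cone argument of \cite[Theorem 4.21]{DeeRZ} and observing that nothing in it is $C^*$-specific; your exactness checks (null-bordism over $M\times[0,1]$ with the $\mathcal{A}_1$-data on one boundary face, capping off via $\alpha$ and a null-bordism of $(\partial W,F_{\mathcal{A}_1})$, and reading off the $\phi$-cycle from a bordism witnessing $\phi_*[M,F_{\mathcal{A}_1},f]=0$) are exactly the steps of that proof, with the Fr\'echet-specific points (smoothness of $\mathcal{A}$-bundles via \cite{SchL2}, additivity of $-\otimes_\phi\mathcal{A}_2$) correctly identified. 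The only bookkeeping you should not wave away entirely is the standard reduction from ``the class vanishes'' to ``the cycle is null-bordant after direct sum/vector bundle modification,'' which is used in the surjectivity halves of each exactness statement, but this is handled as in \cite{DeeRZ} and does not change the argument.
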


The reader should note that in the definition of $r$, the domain of definition is $K_*^{\rm geo}(X;\mathcal{A}_2)$ where the manifolds appearing in the cycles are closed and their boundaries are empty. The empty set has no bundles over it, and the bundle data on the boundary can therefore only be the empty set.

\begin{remark}
We note the following consequence of Theorem \ref{lesforphi} and Theorem \ref{closedandkhom}. If $\mathcal{A}_1$ and $\mathcal{A}_2$ are closed under holomorphic functional calculus in Banach algebras to which $\phi$ extends, a five lemma argument implies that there exists an abstract isomorphism $K_*^{\mathrm{geo}}(pt; \phi)\cong K_*(SC_\phi)$ where 
$$C_{\phi}:=\left\{a\oplus b\in C_0((0,1],\mathcal{A}_2)\oplus \mathcal{A}_1: \; a(1)=\phi(b)\right\}.$$
\end{remark}

\subsection{Relative assembly and the strong relative Novikov property}
\label{subseonrelassforfre}

The assembly mapping for free actions $\mu:K_*(B\Gamma)\to K_*(C^*_\epsilon(\Gamma))$ (for some $C^*$-completion $C^*_\epsilon(\Gamma)\supseteq \mathbb{C}[\Gamma]$) has a geometric description studied in \cite{DG,land}. We assume our model of $B\Gamma$ is a locally finite $CW$-complex. This fact makes a definition of free assembly possible also for Fr\'echet algebra completions $\mathcal{A}$ of $\mathbb{C}[\Gamma]$, a fact used in \cite{DGIII}. The main object used in free assembly is the Mishchenko bundle 
\begin{equation}
\label{michscdef}
\mathcal{L}_{\mathcal{A}}:=E\Gamma\times_\Gamma \mathcal{A}\to B\Gamma.
\end{equation}
On any cell in $B\Gamma$, $\mathcal{L}_\mathcal{A}$ is trivializable to a rank one trivial $\mathcal{A}$-bundle with locally constant transition functions. In particular, for any continuous mapping $f:M\to B\Gamma$ from a smooth manifold $M$, the $\mathcal{A}$-bundle $f^*\mathcal{L}_\mathcal{A}\to M$ admits a canonical $\mathcal{A}$-linear unitary flat connection.

\begin{define}
The free assembly mapping $\mu_{\rm geo}^\mathcal{A}:K_*^{\rm geo}(B\Gamma)\to K_*^{\rm geo}(pt;\mathcal{A})$ is defined at the level of cycles by 
$$\mu_{\rm geo}^\mathcal{A}(M,E,f):=(M,E\otimes f^*\mathcal{L}_\mathcal{A}).$$ 
\end{define}

It is clear from the definition that $\mu_{\rm geo}^\mathcal{A}$ is well defined from classes to classes as it preserves the Baum-Douglas relations. By Theorem \ref{closedandkhom}, the left hand side $K_*^{\rm geo}(B\Gamma)$ is isomorphic to the compactly supported $K$-homology of $B\Gamma$, assuming we have chose $B\Gamma$ to be a locally finite $CW$-complex. Moreover, it was shown in \cite{land} that under the isomorphisms of Theorem \ref{closedandkhom}, $\mu_{\rm geo}^{C^*_{\bf red}}$ coincides with Kasparov's definition of the free assembly mapping. We now turn to the relative case. The relative free assembly mapping relies on the following lemma, the reader can find its proof in \cite[Proposition 3.15]{DGrelI}.

\begin{lemma}
\label{isomopfmish}
Let $\phi:\Gamma_1\to \Gamma_2$ be a group homomorphism inducing a continuous mapping $B\phi:B\Gamma_1\to B\Gamma_2$ and a continuous $*$-homomorphism $\phi_{\mathcal{A}}:\mathcal{A}_1\to \mathcal{A}_2$. Then, there is a canonical isomorphism of $\mathcal{A}_2$-bundles on $B\Gamma_1$:
$$\alpha_0:(B\phi)^*\mathcal{L}_{\mathcal{A}_2}\xrightarrow{\sim} \mathcal{L}_{\mathcal{A}_1}\otimes_{\phi_\mathcal{A}} \mathcal{A}_2.$$
\end{lemma}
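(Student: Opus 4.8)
The plan is to unwind the definition of the Mishchenko bundles on both sides and produce the isomorphism cell-by-cell (equivalently, fibre-by-fibre over $B\Gamma_1$), then check that the locally defined maps glue and are $\mathcal{A}_2$-linear. Recall that $\mathcal{L}_{\mathcal{A}_2} = E\Gamma_2\times_{\Gamma_2}\mathcal{A}_2\to B\Gamma_2$, so its pullback along $B\phi$ is $(B\phi)^*\mathcal{L}_{\mathcal{A}_2} = \{(x,[e,a]) : B\phi(x) = p(e,a)\}$ where $p$ is the bundle projection; while $\mathcal{L}_{\mathcal{A}_1}\otimes_{\phi_\mathcal{A}}\mathcal{A}_2 = (E\Gamma_1\times_{\Gamma_1}\mathcal{A}_1)\otimes_{\phi_\mathcal{A}}\mathcal{A}_2$, which simplifies canonically to $E\Gamma_1\times_{\Gamma_1}(\mathcal{A}_1\otimes_{\phi_\mathcal{A}}\mathcal{A}_2) = E\Gamma_1\times_{\Gamma_1}\mathcal{A}_2$, where $\Gamma_1$ acts on $\mathcal{A}_2$ through $\phi$ and left multiplication. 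First I would fix an equivariant map $E\phi:E\Gamma_1\to E\Gamma_2$ covering $B\phi$ (unique up to $\Gamma_1$-homotopy, which exists by the universal property of $E\Gamma_2$), and define $\alpha_0$ on total spaces by $[e,a]_{\Gamma_1} \mapsto [E\phi(e), a]_{\Gamma_2}$ for $e\in E\Gamma_1$, $a\in\mathcal{A}_2$; equivariance of $E\phi$ with respect to $\phi$ makes this well defined on the balanced product.

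The second step is to verify that this map is well defined, $\mathcal{A}_2$-linear on fibres, and bijective. Well-definedness follows from $E\phi(\gamma\cdot e) = \phi(\gamma)\cdot E\phi(e)$ combined with the definition of the $\Gamma_1$-action on $\mathcal{A}_2$ on the source (multiplication by $\phi(\gamma)$) matching the $\Gamma_2$-action on the target. Right $\mathcal{A}_2$-linearity is immediate since the map does not touch the second coordinate except functorially. Bijectivity is checked fibrewise: over a point $x\in B\Gamma_1$ with lift $e$, the fibre of the source is $\mathcal{A}_2$ (via $a\mapsto[e,a]$) and likewise for the target over $B\phi(x)$ with lift $E\phi(e)$, and $\alpha_0$ is the identity on $\mathcal{A}_2$ in these trivializations, hence an isomorphism of $\mathcal{A}_2$-modules. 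Local triviality and continuity of $\alpha_0$ and its inverse follow because on a cell of $B\Gamma_1$ one can choose a continuous section of $E\Gamma_1$, whose image under $E\phi$ gives a continuous section of $(B\phi)^*E\Gamma_2$, and in the resulting local trivializations $\alpha_0$ is again literally the identity on $\mathcal{A}_2$.

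The one genuine subtlety — and the step I expect to be the main obstacle — is the canonical identification $(E\Gamma_1\times_{\Gamma_1}\mathcal{A}_1)\otimes_{\phi_\mathcal{A}}\mathcal{A}_2 \cong E\Gamma_1\times_{\Gamma_1}\mathcal{A}_2$ at the level of topological $\mathcal{A}_2$-bundles, since one must confirm that forming the balanced tensor product $-\otimes_{\phi_\mathcal{A}}\mathcal{A}_2$ commutes with the fibrewise construction and respects the Fr\'echet-module topologies (the tensor product here being the appropriate completed one for finitely generated projective modules, which on the rank-one trivial pieces $\mathcal{A}_1\otimes_{\phi_\mathcal{A}}\mathcal{A}_2\cong\mathcal{A}_2$ is unproblematic). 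Since $\mathcal{L}_{\mathcal{A}_1}$ is locally trivial with rank-one trivial transition data, this reduces to the observation that the transition functions of $\mathcal{L}_{\mathcal{A}_1}$, which are locally constant $\Gamma_1$-valued (acting by left multiplication on $\mathcal{A}_1$), become the $\Gamma_2$-valued transition functions $\phi(\gamma)$ of $\mathcal{L}_{\mathcal{A}_1}\otimes_{\phi_\mathcal{A}}\mathcal{A}_2$ after applying $\phi$ — which are exactly the pullback transition functions of $(B\phi)^*\mathcal{L}_{\mathcal{A}_2}$ under the same open cover, pulled back via $B\phi$. Once this bookkeeping of transition functions is in place, the cocycle descriptions of the two $\mathcal{A}_2$-bundles coincide verbatim, the map $\alpha_0$ is the induced isomorphism, and its canonicity (independence of auxiliary choices up to the relevant homotopy) follows from the $\Gamma_1$-homotopy uniqueness of $E\phi$ together with the fact that homotopic bundle maps induce the same isomorphism class — indeed the same isomorphism, after one notes all the involved local models are constant in $\mathcal{A}_2$.
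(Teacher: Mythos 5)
Your construction is correct and is essentially the intended argument (the paper itself defers the proof to \cite[Proposition 3.15]{DGrelI}): one identifies $\mathcal{L}_{\mathcal{A}_1}\otimes_{\phi_\mathcal{A}}\mathcal{A}_2\cong E\Gamma_1\times_{\Gamma_1}\mathcal{A}_2$, with $\Gamma_1$ acting on $\mathcal{A}_2$ through $\phi$, and compares this with $(B\phi)^*\mathcal{L}_{\mathcal{A}_2}$ by means of a $\phi$-equivariant lift $E\phi:E\Gamma_1\to E\Gamma_2$ covering $B\phi$, checking well-definedness, fibrewise $\mathcal{A}_2$-linearity and bijectivity, and local triviality exactly as you do. Two minor remarks: your $\alpha_0$ is the inverse of the map as stated in the lemma (harmless), and ``canonical'' should be understood relative to the lift $E\phi$ implicitly used in constructing $B\phi$ --- two different lifts covering the same $B\phi$ give isomorphisms differing by left multiplication by an element of $\Gamma_2$ centralizing $\phi(\Gamma_1)$, so the claim that homotopic choices yield literally ``the same isomorphism'' is a slight overstatement, although the isomorphism class and every later use of $\alpha_0$ are unaffected.
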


The reader should note that $\mathcal{L}_{\mathcal{A}_1}$ is a locally trivial rank one $\mathcal{A}_1$-bundle so there is no need for completion in the fibre wise algebraic tensor product  $\mathcal{L}_{\mathcal{A}_1}\otimes_{\phi_\mathcal{A}} \mathcal{A}_2$.

\begin{theorem}
\label{relativeassthm}
The free relative assembly map $\mu_{\rm geo}^{\phi_\mathcal{A}}:K_*^{\rm geo}(B\phi)\to K_*^{\rm geo}(pt;\phi_\mathcal{A})$, defined by  
$$(W,E,(f,g))\mapsto (W,(E\otimes f^*\mathcal{L}_{\mathcal{A}_2},E|_{\partial W}\otimes g^*\mathcal{L}_{\mathcal{A}_1},\mathrm{id}_{E|_{\partial W}}\otimes g^*(\alpha_0))),$$ 
is well defined and fits into a commuting diagram with long 2-periodic exact rows: 
\small
\begin{equation}
\label{yeatanothercommuting}
 \minCDarrowwidth15pt
\begin{CD}
@>>> K_*^{\textnormal{geo}}(B\Gamma_1) @>B\phi_*>> K_*^{\textnormal{geo}}(B\Gamma_2) @>r>> K_*^{\textnormal{geo}}(B\phi) @>\delta >> K_{* -1}^{\textnormal{geo}}(B\Gamma_1) @>>> \\
@. @VV \mu_{\rm geo}^{\mathcal{A}_1} V @VV\mu_{\rm geo}^{\mathcal{A}_2} V  @VV\mu_{\rm geo}^{\phi_\mathcal{A}}  V  @VV\mu_{\rm geo}^{\mathcal{A}_1}  V \\
@>>> K_*^{\textnormal{geo}}(pt;\mathcal{A}_1) @>\phi_\mathcal{A} >> K_*^{\textnormal{geo}}(pt;\mathcal{A}_2) @>r>> K_*^{\textnormal{geo}}(pt;\phi_\mathcal{A}) @>\delta>> K_{* -1}^{\textnormal{geo}}(pt;\mathcal{A}_1) @>>> \\
\end{CD}
\end{equation}
\normalsize
\end{theorem}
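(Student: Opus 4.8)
The plan is to do everything at the level of geometric cycles: the only substantial input is Lemma~\ref{isomopfmish}, which supplies the gluing isomorphism $\alpha_0$, and everything else reduces to functoriality of the Mishchenko bundle under pullback together with the cycle-level descriptions of $r$, $\delta$, $B\phi_*$ and $\phi_\mathcal{A}$ already recorded in Theorems~\ref{sameasinaabove} and~\ref{lesforphi}. First I would check that the stated formula sends a relative cycle $(W,E,(f,g))$ for $B\phi$ to a legitimate cycle for $K_*^{\rm geo}(pt;\phi_\mathcal{A})$. Since $f\colon W\to B\Gamma_2$, the bundle $E\otimes f^*\mathcal{L}_{\mathcal{A}_2}\to W$ is an $\mathcal{A}_2$-bundle, and since $g\colon\partial W\to B\Gamma_1$ the bundle $E|_{\partial W}\otimes g^*\mathcal{L}_{\mathcal{A}_1}\to\partial W$ is an $\mathcal{A}_1$-bundle. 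Using $f|_{\partial W}=B\phi\circ g$ one has $(f^*\mathcal{L}_{\mathcal{A}_2})|_{\partial W}=g^*\bigl((B\phi)^*\mathcal{L}_{\mathcal{A}_2}\bigr)$, and applying $g^*\alpha_0$ from Lemma~\ref{isomopfmish} and tensoring with $E|_{\partial W}$ yields precisely an isomorphism of $\mathcal{A}_2$-bundles $(E\otimes f^*\mathcal{L}_{\mathcal{A}_2})|_{\partial W}\xrightarrow{\ \sim\ }(E|_{\partial W}\otimes g^*\mathcal{L}_{\mathcal{A}_1})\otimes_{\phi_\mathcal{A}}\mathcal{A}_2$, so the triple $(E\otimes f^*\mathcal{L}_{\mathcal{A}_2},\,E|_{\partial W}\otimes g^*\mathcal{L}_{\mathcal{A}_1},\,\mathrm{id}\otimes g^*\alpha_0)$ has the required form.

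Next I would verify that $\mu_{\rm geo}^{\phi_\mathcal{A}}$ respects the Baum--Douglas relations and hence descends to classes. Disjoint union and direct sum are immediate because $E$ enters linearly and $\alpha_0$ is natural (the gluing isomorphism of a direct sum is the direct sum of the gluing isomorphisms). For vector bundle modification along a spin$^c$-bundle $V\to W$, writing $\pi$ for the projection of the modified manifold, one uses $(f\circ\pi)^*\mathcal{L}_{\mathcal{A}_2}=\pi^*f^*\mathcal{L}_{\mathcal{A}_2}$ and $\hat E=\pi^*E\otimes\beta_V$, with $\beta_V$ the Bott bundle supported fibrewise, to produce a canonical isomorphism $\widehat{E\otimes f^*\mathcal{L}_{\mathcal{A}_2}}\cong\hat E\otimes(f\circ\pi)^*\mathcal{L}_{\mathcal{A}_2}$, and likewise over the boundary, compatibly with $\alpha_0$; thus modification commutes with assembly. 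The bordism relation is the only point requiring genuine care: given a relative bordism $((Z,W),E,(f,g))$ one applies the same recipe — pull $\mathcal{L}_{\mathcal{A}_2}$ back along $f\colon Z\to B\Gamma_2$, pull $\mathcal{L}_{\mathcal{A}_1}$ back along $g$ over the spin$^c$-manifold with boundary $\partial Z\setminus W^\circ$, and insert $\alpha_0$ on the corner — and one must check that the output is a relative cycle with boundary for $\phi_\mathcal{A}$ whose boundary is $\mu_{\rm geo}^{\phi_\mathcal{A}}$ of $\partial((Z,W),E,(f,g))$. This is a bookkeeping exercise with the manifold-with-corners structure, namely the compatibility of the various restrictions of $f^*\mathcal{L}_{\mathcal{A}_2}$, $g^*\mathcal{L}_{\mathcal{A}_1}$ and $g^*\alpha_0$ along $\partial W$, but it reduces cleanly to functoriality of pullback and naturality of $\alpha_0$.

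Finally, commutativity of the three squares in \eqref{yeatanothercommuting} is read off cycle by cycle. In the $r$-square, $r(M,E,f)=(M,E,(f,\emptyset))$ has empty boundary, so both ways around send it to $(M,(E\otimes f^*\mathcal{L}_{\mathcal{A}_2},\emptyset,\emptyset))$. In the $\delta$-square, $\delta$ on either side strips off the boundary datum, and both compositions send $(W,E,(f,g))$ to $(\partial W,\,E|_{\partial W}\otimes g^*\mathcal{L}_{\mathcal{A}_1})$. In the $B\phi_*$-square, $(M,E,f)$ goes to $(M,E\otimes(B\phi\circ f)^*\mathcal{L}_{\mathcal{A}_2})$ one way and to $(M,(E\otimes f^*\mathcal{L}_{\mathcal{A}_1})\otimes_{\phi_\mathcal{A}}\mathcal{A}_2)$ the other, and these cycles are identified via $\mathrm{id}_E\otimes f^*\alpha_0$, again by Lemma~\ref{isomopfmish}. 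Exactness of the two rows is already supplied by Theorems~\ref{sameasinaabove} and~\ref{lesforphi}, so nothing further is needed there. I expect the relative bordism bookkeeping, keeping track of the regular domain $W\subseteq\partial Z$ and of the corner $\partial W$, to be the only place where real attention is required; the remainder of the argument is formal.
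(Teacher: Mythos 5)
Your proposal is correct and follows essentially the same route as the paper, which simply invokes that the argument of \cite[Theorem 3.18]{DGrelI} carries over mutatis mutandis from the $C^*$-algebraic to the Fr\'echet setting: namely, the cycle-level verification that the assembled triple is a legitimate cycle via Lemma \ref{isomopfmish}, that the Baum--Douglas relations (with bordism and its corner bookkeeping as the only delicate point) are preserved, and that the three squares commute by direct inspection of the maps $r$, $\delta$, $B\phi_*$ and $\phi_\mathcal{A}$ on cycles. In effect you have written out the details the paper delegates by citation, and they check out.
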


The proof of Theorem \ref{relativeassthm} goes mutatis mutandis to \cite[Theorem 3.18]{DGrelI}.

\begin{remark}
The functoriality properties of the involved groups imply that whenever $(f,g):[h:Y\to X]\to[B\phi:B\Gamma_1\to B\Gamma_2]$, there is an associated free assembly $\mu_{\rm geo}:=\mu_{\rm geo}^{\phi_\mathcal{A}}\circ (f,g)_*:K_*^{\rm geo}(h)\to K_*^{\rm geo}(pt;\phi_\mathcal{A})$ which fits into a commuting diagram as \eqref{yeatanothercommuting}.
\end{remark}

A discrete group $\Gamma$ is said to satisfy the strong Novikov conjecture if the free assembly mapping $\mu_{\rm geo}^{C^*_{\bf max}}:K_*^{\rm geo}(B\Gamma)\to K_*^{\rm geo}(pt;C^*(\Gamma))$ is a rational injection and the reduced strong Novikov conjecture if the reduced free assembly mapping $\mu_{\rm geo}^{C^*_{\bf red}}:K_*^{\rm geo}(B\Gamma)\to K_*^{\rm geo}(pt;C^*_{\bf red}(\Gamma))$ is a rational injection. The group $\Gamma$ is said to satisfy the the reduced Baum-Connes property if the Baum-Connes assembly mapping $K_*^\Gamma(\underline{\mathcal{E}}\Gamma)\to K_*(C^*_{\bf red}(\Gamma)$ (for proper actions) is an isomorphism. If $\Gamma$ is torsion-free, Baum-Connes assembly mapping coincides with the assembly mapping for free actions up to the canonical isomorphism  $K_*^\Gamma(\underline{\mathcal{E}}\Gamma)\cong K_*(B\Gamma)$. The strong Novikov conjecture follows from the reduced strong Novikov conjecture. The reduced strong Novikov conjecture follows from the reduced Baum-Connes property of $\Gamma$. 

\begin{define}
Consider a group homomorphism $\phi:\Gamma_1\to \Gamma_2$ inducing a continuous $*$-homomorphism $\phi_{\mathcal{A}}:\mathcal{A}_1\to \mathcal{A}_2$. 
\begin{enumerate}
\item Assume that $\Gamma_1$ and $\Gamma_2$ are torsion free. If $\mu_{\rm geo}^{\phi_\mathcal{A}}$ is an isomorphism, we say that $\phi_{\mathcal{A}}:\mathcal{A}_1\to \mathcal{A}_2$ has the Baum-Connes property.
\item If $\mu_{\rm geo}^{\phi_\mathcal{A}}$ is a rational injection, we say that $\phi_{\mathcal{A}}:\mathcal{A}_1\to \mathcal{A}_2$ has the strong relative Novikov property.
\end{enumerate}
In (2), if $\mathcal{A}_i=C^*_\epsilon(\Gamma_i)$, $i=1,2$, are $C^*$-algebra completions we simply say that $\phi$ has the $\epsilon$-strong relative Novikov property.
\end{define}

\begin{remark}
\label{strongremark}
By an argument of Weinberger \cite{weinnov}, the relative higher signatures of manifolds with boundaries are homotopy invariants if the fundamental group of the manifold with boundary satisfies the Novikov conjecture and the boundary satisfies the Borel conjecture. This justifies the terminology ``strong relative Novikov property" as we make no claims on its veracity. It was proven in \cite[Theorem 4.18]{DGrelI} that the strong relative Novikov property implies such homotopy invariance of relative higher signatures of manifolds with boundaries as in \cite[Section 12]{weinnov} and \cite[Section 4.9]{Lott}.  
\end{remark}

\begin{remark}
The absolute Baum-Connes conjecture for a group $\Gamma$ implies the strong Novikov conjecture. The Baum-Connes conjecture for two torsion-free groups $\Gamma_1$ and $\Gamma_2$, and the five lemma, implies the relative Baum-Connes conjecture for any homomorphism $\phi:\Gamma_1\to \Gamma_2$. However, to the authors knowledge, there is no general way to deduce the strong relative Novikov property from the Baum-Connes property if there is torsion in $\Gamma_1$ and $\Gamma_2$. 
\end{remark}

\subsection{De Rham homology and Chern characters}
\label{subseconder}

We now turn to the noncommutative de Rham theory for Fr\'echet algebras and Chern characters on geometric models, both in the absolute setting. Below in Section \ref{chernonk} we return to the relative setting where the goal is to complete the diagram \eqref{yeatanothercommuting} (see page \pageref{yeatanothercommuting}) with an additional level reached through Chern characters describing the situation in homology. We follow the presentation of \cite{DGIII}.

\subsubsection{Noncommutative de Rham theory}

We fix a unital algebra $\mathcal{A}$. We define $\Omega^0(\mathcal{A}):=\mathcal{A}$ and $\Omega_1(\mathcal{A}):= \mathcal{A}\otimes (\mathcal{A}/\field{C}1_\mathcal{A})$. The algebra $\mathcal{A}$ carries a universal derivation $\mathrm{d}_\mathcal{A}:\Omega_0(\mathcal{A})=\mathcal{A}\to \Omega_1(\mathcal{A})$, $a\mapsto \mathrm{d}a=1\otimes a$. Here $\Omega_1(\mathcal{A})$ is equipped with the bimodule structure making $\mathrm{d}_{\mathcal{A}}$ into a derivation. We set $\Omega_k(\mathcal{A}):=\Omega_1(\mathcal{A})^{\otimes_\mathcal{A} k}=\mathcal{A}\otimes (\mathcal{A}/\field{C}1_\mathcal{A})^{\otimes_\field{C} k}$ for $k>0$. One constructs the universal $\Z$-graded differential algebra of $\mathcal{A}$ as $\Omega_*(\mathcal{A}):=\prod_{k=0}^\infty \Omega_k(\mathcal{A})$. The Leibniz rule extends $\mathrm{d}_{\mathcal{A}}$ to a derivation on $\Omega_*(\mathcal{A})$; we use the same notation $\mathrm{d}_\mathcal{A}$ for the derivation on $\Omega_*(\mathcal{A})$. The abelianized complex is defined as $\Omega_*^{\rm ab}(\mathcal{A}):=\Omega_*(\mathcal{A})/[\Omega_*(\mathcal{A}),\Omega_*(\mathcal{A})]$; all commutators are graded. For a unital Fr\'echet  $*$-algebra $\mathcal{A}$, we can replace all algebraic tensor products by projective tensor products arriving at a $\Z$-graded differential Fr\'echet algebras $\hat{\Omega}_*(\mathcal{A})$ and we set $\hat{\Omega}_*^{\rm ab}(\mathcal{A}):=\hat{\Omega}_*(\mathcal{A})/\overline{[\hat{\Omega}_*(\mathcal{A}),\hat{\Omega}_*(\mathcal{A})]}$. For additional details on topological homology, see \cite{helembook,jotaylor}.

We let $\tilde{\mathcal{A}}$ denote the unitalization of an algebra $\mathcal{A}$. Unitalization defines a unital Fr\'echet algebra from a Fr\'echet algebra.

\begin{define}
If $\mathcal{A}$ is an algebra, we define its de Rham homology $H_*^{\rm dR}(\mathcal{A}):=H_*(\Omega_*^{\rm ab}(\tilde{\mathcal{A}}))$. If $\mathcal{A}$ is a Fr\'echet algebra, we define its topological de Rham homology $\hat{H}_*^{\rm dR}(\mathcal{A})$ as the topological homology of $\hat{\Omega}_*^{\rm ab}(\tilde{\mathcal{A}})$ (that is, closed cycles modulo the closure of the exact cycles).
\end{define}

The de Rham homology is related to cyclic homology. We denote the Hochschild complex of an algebra $\mathcal{A}$ by $\mathcal{C}_*(\mathcal{A})$ and the cyclic complex by $\mathcal{C}_*^\lambda(\mathcal{A})$. We denote the boundary operator on both of these complexes by $b$. The Hochschild homology will be denoted by $HH_*(\mathcal{A})$ and the cyclic homology by $HC_*(\mathcal{A})$. For details, see \cite{cuncycenc,Kaast}. The  $S$-operator on cyclic homology is constructed in for instance \cite[Page 11]{cuncycenc}. The next result is from \cite[Theorem 2.15]{Kaast}, it can also be found as \cite[Proposition 3.2]{DGIII}.

\begin{prop}
\label{hdrandhcyc}
The map
\begin{equation}
\label{omegatocmodb}
\Omega_k^{\rm ab}(\tilde{\mathcal{A}})\ni a_0\mathrm{d} a_1\cdots \mathrm{d} a_k\mapsto a_0\otimes a_1\otimes \cdots a_k\in \mathcal{C}_k^\lambda(\mathcal{A})/\mathrm{Im}(b)
\end{equation}
induces an isomorphism 
$$H_*^{\rm dR}(\mathcal{A})\cong \mathrm{Im}(S:HC_{*+2}(\mathcal{A})\to HC_*(\mathcal{A}))=\ker(B:HC_*(\mathcal{A})\to HH_{*+1}(\mathcal{A})).$$
\end{prop}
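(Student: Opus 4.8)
\textbf{Proof plan for Proposition~\ref{hdrandhcyc}.} The statement is a classical fact of cyclic homology, essentially due to Connes, and the plan is to verify that the map \eqref{omegatocmodb} is a well-defined chain map and to identify its image with both the image of the $S$-operator and the kernel of the Connes $B$-operator. I would organize the argument in three stages.

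\emph{Stage 1: the de Rham complex as a quotient of the cyclic complex.} First I would recall that on the unitalization $\tilde{\mathcal{A}}$ the abelianized universal differential forms $\Omega^{\rm ab}_k(\tilde{\mathcal{A}})$ are spanned by $a_0\,\mathrm{d}a_1\cdots\mathrm{d}a_k$ with $a_0\in\tilde{\mathcal{A}}$, $a_i\in\tilde{\mathcal{A}}/\C 1$, and that the de Rham differential corresponds, under the assignment in \eqref{omegatocmodb}, to the passage through the normalized cyclic module. Concretely, one checks that the map sending $a_0\,\mathrm{d}a_1\cdots\mathrm{d}a_k$ to the class of $a_0\otimes a_1\otimes\cdots\otimes a_k$ in $\mathcal{C}^\lambda_k(\mathcal{A})/\mathrm{Im}(b)$ is well defined: the commutators quotiented out in $\Omega^{\rm ab}_*$ are precisely killed after passing to cyclic invariants, and the relation $\mathrm{d}(ab)=\mathrm{d}a\,b+a\,\mathrm{d}b$ translates into the vanishing of the relevant Hochschild boundary. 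The key identity to record here is Connes' formula $\mathrm{d}_{\mathcal{A}}\leftrightarrow$ the operator induced by $1-\lambda$ together with $b$, i.e.\ that $H_*(\Omega^{\rm ab}_*(\tilde{\mathcal{A}}),\mathrm{d})$ computes the homology of the complex $\big(\mathcal{C}^\lambda_*(\mathcal{A})/\mathrm{Im}(b),\,0\big)$ in an appropriate sense — more precisely, that $\Omega^{\rm ab}_k(\tilde{\mathcal{A}})\cong \mathcal{C}^\lambda_k(\mathcal{A})/b(\mathcal{C}_{k+1}(\mathcal{A}))$ as graded vector spaces, with the de Rham differential becoming zero on the quotient. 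This is where I would invoke the normalization results so that the unitalization does not contribute extra terms.

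\emph{Stage 2: the SBI sequence and the two descriptions of the image.} Next I would bring in the Connes long exact sequence relating $HH_*$, $HC_*$ and the periodicity operator $S$, together with the $B$-operator in the $(b,B)$-bicomplex. The standard fact is that there is an exact sequence $HH_{*+1}(\mathcal{A})\xrightarrow{B}HC_*(\mathcal{A})\xrightarrow{S}HC_{*-2}(\mathcal{A})\xrightarrow{B}HH_{*-1}(\mathcal{A})$ (the $I$ map being suppressed here), so that $\mathrm{Im}(S:HC_{*+2}\to HC_*)=\ker(B:HC_*\to HH_{*+1})$ is immediate from exactness — this is the second equality in the statement and requires no real work beyond citing the $SBI$ sequence. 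The substance is the first isomorphism $H^{\rm dR}_*(\mathcal{A})\cong\mathrm{Im}(S)$: I would show that the quotient complex $\mathcal{C}^\lambda_*/\mathrm{Im}(b)$ computing de Rham homology sits inside $HC_*$ exactly as the image of $S$, by using the explicit description of $S$ at the level of the cyclic bicomplex (lowering degree by $2$, with the image consisting of classes representable by cycles in the "bottom row"). One direction: a de Rham class $[a_0\,\mathrm{d}a_1\cdots\mathrm{d}a_k]$ maps to a class in $HC_k$ that is visibly in the image of $S$; the other direction: any class in $\mathrm{Im}(S)$ can be represented in the normalized cyclic complex by a cycle of the required form modulo $b$-boundaries, and the de Rham differential matches up. I would cite \cite[Proposition~3.2]{DGIII} and the standard references \cite{cuncycenc,Kaast} for the bookkeeping.

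\emph{Stage 3: the Fr\'echet (topological) case.} Finally, since the proposition is stated for an algebra $\mathcal{A}$ but will be applied to Fr\'echet $*$-algebras, I would note that the same argument goes through with projective tensor products replacing algebraic ones and with "closed cycles modulo closure of boundaries" replacing ordinary homology, provided one is careful that the maps in the $SBI$ sequence are continuous and that $\mathrm{Im}(b)$ is used in the sense appropriate to topological homology. \textbf{The main obstacle} I expect is exactly this last point together with the well-definedness check in Stage~1: verifying that passing to the abelianized (commutator-quotient) forms corresponds \emph{on the nose} to the cyclic quotient modulo $\mathrm{Im}(b)$, including the unitalization and normalization subtleties, rather than merely up to a spectral-sequence argument; once that identification is clean, the identification of the image of $S$ with the kernel of $B$ is a formal consequence of the $SBI$ sequence and carries no difficulty.
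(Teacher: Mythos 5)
The paper does not actually prove this proposition: it is quoted from \cite[Proposition 3.2]{DGIII} and is Karoubi's classical theorem (see \cite{Kaast}), so what you are really proposing is an outline of the standard proof. Your skeleton is the right one: the identification of $\Omega_k^{\rm ab}(\tilde{\mathcal{A}})$ with $\mathcal{C}_k^\lambda(\mathcal{A})/\mathrm{Im}(b)$ via the map \eqref{omegatocmodb} (graded commutators map onto $\mathrm{Im}(1-\lambda)+\mathrm{Im}(b)$, with normalization handling the unitalization), the equality $\mathrm{Im}(S:HC_{*+2}\to HC_*)=\ker(B:HC_*\to HH_{*+1})$ as an immediate consequence of exactness of Connes' $SBI$ sequence (your displayed sequence has the $B$ and $I$ arrows mislabelled, but the exactness statement you extract is the correct one), and the remark that the topological case only requires projective tensor products, closures of boundaries and continuity of $S$, $B$, $I$, which matches the paper's conventions.

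There is, however, a genuine gap at the heart of Stage 1, and it propagates into Stage 2. Your claim that under $\Omega_k^{\rm ab}(\tilde{\mathcal{A}})\cong \mathcal{C}_k^\lambda(\mathcal{A})/\mathrm{Im}(b)$ the de Rham differential ``becomes zero on the quotient'' is false: taken literally it would give $H_*^{\rm dR}(\mathcal{A})\cong \mathcal{C}_*^\lambda(\mathcal{A})/\mathrm{Im}(b)$, not $\mathrm{Im}(S)$. What is true is that $\mathrm{d}(a_0\mathrm{d}a_1\cdots\mathrm{d}a_k)$ maps to the class of $1\otimes a_0\otimes\cdots\otimes a_k$, and since all cyclic rotations of $\mathrm{d}a_0\cdots\mathrm{d}a_k$ agree up to sign modulo graded commutators, $\mathrm{d}$ corresponds, up to a nonzero scalar in each degree, to the operator $\bar{B}$ induced by Connes' boundary $B$ on $\mathcal{C}_*^\lambda(\mathcal{A})/\mathrm{Im}(b)$. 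Hence $H_*^{\rm dR}(\mathcal{A})=\ker\bar{B}/\mathrm{Im}\bar{B}$, and the actual content of the proposition is the diagram chase in the $SBI$ sequence showing that an element of $\mathcal{C}_k^\lambda(\mathcal{A})/\mathrm{Im}(b)$ annihilated by $\bar{B}$ can be corrected to an honest $b$-cycle whose cyclic class lies in $\mathrm{Im}(S)$, and that $\mathrm{Im}\bar{B}$ accounts exactly for the indeterminacy; note that a general element of $\mathcal{C}_k^\lambda(\mathcal{A})/\mathrm{Im}(b)$ does not even define a class in $HC_k(\mathcal{A})$, so the phrase ``sits inside $HC_*$ exactly as the image of $S$'' is not a substitute for this argument. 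Your proposal leaves precisely this step at the level of ``matches up''; either carry out the chase or cite Karoubi's theorem (as the paper in effect does), after which Stages 2 and 3 go through as you describe.
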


We return to the setting of Fr\'echet  $*$-algebra completions $\mathcal{A}$ of $\C[\Gamma]$ for a discrete finitely generated group $\Gamma$. The de Rham homology of $\C[\Gamma]$ contains the homology of $B\Gamma$. We denote the unitalization of the group algebra by $\tilde{\C}[\Gamma]$. There is a differential graded map 
$$j_{\Omega,\mathcal{A}}:\Omega_*^{\rm ab}(\tilde{\C}[\Gamma])\to \hat{\Omega}_*^{\rm ab}(\tilde{\mathcal{A}}).$$
The unit in $\Gamma$ will be denoted by $e$ and the formal unit in a unitalization by $1$. The sub complex $\Omega^{<e>}_*(\C[\Gamma])\subseteq \Omega_*^{\rm ab}(\tilde{\C}[\Gamma])$ is defined as the linear span of the set
\begin{align}
\label{locdaefe}
\cup_{k\in \field{N}}\{g_0\mathrm{d}&g_1\mathrm{d}g_2\cdots \mathrm{d}g_k: \; g_0g_1\cdots g_k=e\}\quad\mbox{and}\\ 
\nonumber
&\cup_{k\in \field{N}}\{1\cdot \mathrm{d}g_1\mathrm{d}g_2\cdots \mathrm{d}g_k: \; g_1\cdots g_k=e\},
\end{align}
The homology of $\Omega^{<e>}_*(\C[\Gamma])$ is denoted by $H^{<e>}_*(\C[\Gamma])$. The next result follows from Proposition \ref{hdrandhcyc} and \cite[Section 2.21-2.26]{Kaast}, building on the results from \cite{burgher}. For a group element $\gamma\in \Gamma$, we let $<\gamma>\subseteq \Gamma$ denote its conjugacy class. To shorten notation, we sometimes write $R_*$ for $HC_*(\C)$. The mapping $\beta_\Gamma:H^{<e>}_*(\C[\Gamma])\xrightarrow{\sim}H_*(B\Gamma)\otimes HC_{*}(\C)$ is defined as the composition:
\begin{align*}
H^{<e>}_*(\C[\Gamma])&\to HC_*(\C[\Gamma])\cong \begin{matrix}
H_*(B\Gamma)\otimes R_*\\
\bigoplus\\
\bigoplus_{<\gamma>\in C'\setminus \{<e>\}}H_*(BN_\gamma)\otimes R_*\\
\bigoplus\\ 
\bigoplus_{<\gamma>\in C''}H_*(BN_\gamma)\end{matrix}\to H_*(B\Gamma)\otimes HC_*(\C).
\end{align*}
The last mapping is the projection. The last isomorphism is the Burghelea isomorphism (see \cite{burgher}). The set $C'$ denotes the set of conjugacy classes of finite order elements, $C''$ the set of conjugacy classes of infinite order elements. The group $\Gamma_\gamma$ denotes the centralizer of $\gamma$ and $N_\gamma:=\Gamma_\gamma/\gamma^\Z$.

\begin{prop}
\label{hstarhstar}
The mapping $\beta_\Gamma:H^{<e>}_*(\C[\Gamma])\xrightarrow{\sim}H_*(B\Gamma)\otimes HC_{*}(\C)$ is an isomorphism
\end{prop}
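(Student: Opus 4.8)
The plan is to deduce Proposition~\ref{hstarhstar} from the two inputs cited just before it: Proposition~\ref{hdrandhcyc}, which identifies $H^{\rm dR}_*$ with a canonical subspace/quotient of cyclic homology (the image of the $S$-operator, equivalently the kernel of $B$), and the Burghelea decomposition of $HC_*(\C[\Gamma])$ into a summand indexed by conjugacy classes. First I would recall that $H^{<e>}_*(\C[\Gamma])$ was \emph{defined} as the homology of the subcomplex $\Omega^{<e>}_*(\C[\Gamma])\subseteq \Omega^{\rm ab}_*(\tilde\C[\Gamma])$ spanned by the elements in \eqref{locdaefe}, i.e. exactly those forms $g_0\mathrm{d}g_1\cdots\mathrm{d}g_k$ whose product of group elements equals $e$. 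Under the map \eqref{omegatocmodb} these correspond precisely to the Hochschild/cyclic chains supported on the conjugacy class $<e>$. So the first key step is: the inclusion $\Omega^{<e>}_*\hookrightarrow\Omega^{\rm ab}_*(\tilde\C[\Gamma])$ is compatible, via \eqref{omegatocmodb}, with the projection of $HC_*(\C[\Gamma])$ onto its $<e>$-summand, and on that summand the differential respects the support (conjugacy-class decomposition is a decomposition of complexes), so that $H^{<e>}_*(\C[\Gamma])$ maps isomorphically onto the de~Rham-type piece of the $<e>$-summand.

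The second step is to identify that piece. By the Burghelea computation the $<e>$-summand of $HC_*(\C[\Gamma])$ is $H_*(B\Gamma)\otimes HC_*(\C)$, with $HC_*(\C)=R_*$ concentrated in even degrees (a copy of $\C$ in each nonnegative even degree), and under this identification the $S$-operator acts as $\mathrm{id}\otimes S_\C$, which is surjective on all of $H_*(B\Gamma)\otimes R_*$ because $S_\C:HC_{*+2}(\C)\to HC_*(\C)$ is an isomorphism in positive even degrees and the whole thing already sits in the stable range. Hence by Proposition~\ref{hdrandhcyc} the de~Rham homology associated to the $<e>$-summand is the entire summand $H_*(B\Gamma)\otimes HC_*(\C)$. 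Composing the isomorphism of the first step with this identification gives exactly the map $\beta_\Gamma$ described before the statement as ``the composition $H^{<e>}_*\to HC_*(\C[\Gamma])\cong(\text{Burghelea decomposition})\to H_*(B\Gamma)\otimes HC_*(\C)$,'' and shows it is an isomorphism. I would cite \cite[Section 2.21--2.26]{Kaast} and \cite{burgher} for the precise form of the Burghelea decomposition and for the $S$-operator computation, and \cite[Proposition 3.2]{DGIII} (Proposition~\ref{hdrandhcyc} here) for the passage between $H^{\rm dR}_*$ and cyclic homology; the proof is really an assembly of these known pieces rather than a new computation.

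The main obstacle, and the one place I would be careful, is the first step: verifying that $\Omega^{<e>}_*(\C[\Gamma])$ really maps \emph{isomorphically} (not just with dense/surjective image, and not just injectively) onto the $<e>$-indexed subcomplex of $\mathcal{C}^\lambda_*(\C[\Gamma])/\mathrm{Im}(b)$ under \eqref{omegatocmodb}. One has to check that the relation ``$g_0g_1\cdots g_k=e$'' is exactly the cyclic-homology notion of being supported on the identity conjugacy class, that the universal-differential generators $1\cdot\mathrm{d}g_1\cdots\mathrm{d}g_k$ (the second family in \eqref{locdaefe}) match the degenerate part correctly after abelianization, and that the differential $\mathrm{d}$ on $\Omega^{<e>}_*$ corresponds under \eqref{omegatocmodb} to the part of the cyclic boundary that preserves support --- i.e. that multiplying adjacent group elements never leaves the conjugacy class $<e>$, which is clear since $e$ is central but should be stated. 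Once support-compatibility of the differentials is in hand, everything else is bookkeeping with the Burghelea splitting. A secondary, purely topological-vector-space point worth a remark (since $\mathcal{A}$-completions appear elsewhere in the paper but \emph{not} here --- the statement is about the purely algebraic $\C[\Gamma]$) is that no closure issues arise: $H^{<e>}_*(\C[\Gamma])$ and $H_*(B\Gamma)\otimes HC_*(\C)$ are ordinary (algebraic) homology groups, so ``isomorphism'' is meant in the algebraic sense and the argument needs no topological input.
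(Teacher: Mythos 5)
Your proposal is correct and follows essentially the same route as the paper, which offers no written argument beyond citing Proposition \ref{hdrandhcyc}, \cite[Section 2.21--2.26]{Kaast} and \cite{burgher}: localize the de Rham--cyclic comparison at the conjugacy class $<e>$, identify the $<e>$-summand via Burghelea with $H_*(B\Gamma)\otimes HC_*(\C)$, and note that $S$ is surjective there so the de Rham piece is the whole summand. You also correctly flag the only point needing care, namely that the decomposition by conjugacy class of the total product is compatible with the differentials and with the map \eqref{omegatocmodb}, so that Proposition \ref{hdrandhcyc} localizes.
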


\subsubsection{Admissible completions}

The localized part $H^{<e>}_*(\C[\Gamma])$ of the de Rham homology of $\mathbb{C}[\Gamma]$ relates to the group homology via $\beta_\Gamma$. As such, there is a mapping $\beta^*_\Gamma: H^*(B\Gamma)\otimes HC^{*}(\C)\to \mathrm{Hom}(H^{<e>}_*(\C[\Gamma]),\mathbb{C})$. We embedd $H^*(B\Gamma)=H^*(B\Gamma)\otimes HC^0(\mathbb{C})\hookrightarrow H^*(B\Gamma)\otimes HC^{*}(\C)$ arriving at a mapping $\hat{\beta}_\Gamma:H^*(B\Gamma)\to \mathrm{Hom}(H^{\rm dR}_*(\C[\Gamma]),\mathbb{C})$ by pre-composing with the projection mapping $H^{\rm dR}_*(\C[\Gamma])\to H^{<e>}_*(\C[\Gamma])$. 

To describe this mapping more concretely, we let $\mathcal{C}^*(\Gamma)$ denote the complex of group cochains on $\Gamma$. For any model of $B\Gamma$, $H^*(\mathcal{C}^*(\Gamma))\cong H^*(B\Gamma)$. We often identify the two cohomology groups. For a group cocycle $c\in \mathcal{C}^k(\Gamma)$, the class $\hat{\beta}_\Gamma[c]$ is represented by $\hat{c}:\Omega_k^{\rm ab}(\tilde{\C}[\Gamma])\to \C$ where 
\begin{equation}
\label{thereisahat}
\hat{c}(\gamma_0\mathrm{d}\gamma_1\cdots \mathrm{d}\gamma_p):=
\begin{cases}
c(\gamma_1,\gamma_1\gamma_2,\ldots,\gamma_1\gamma_2\cdots\gamma_p), \;&\gamma_0\in \Gamma, \mbox{  and   }\gamma_0\gamma_1\cdots\gamma_p=e,\\
0,&  \gamma_0=1, \mbox{  or   }\gamma_0\gamma_1\cdots\gamma_p\neq e, \end{cases}
\end{equation}
Recall our convention to denote the adjointed unit by $1$ and the unit in $\Gamma$ by $e$. See more in \cite[Equation (20), page 209]{Lott} or \cite[Page 377]{cmnovikov}. In this way we arrive at a pairing 
$$H^*(B\Gamma)\times H^{\rm dR}_*(\C[\Gamma])\to \C.$$
For a general Fr\'echet algebra completion $\mathcal{A}$ of $\C[\Gamma]$, we can define a localized part $\hat{H}^{<e>}_*(\mathcal{A})$ of the de Rham homology of $\mathcal{A}$ by taking the topological homology of the closure of $\Omega^{<e>}_*(\C[\Gamma])$ in $\hat{\Omega}_*^{\rm ab}(\tilde{\mathcal{A}})$ as in \cite[Section 4.1]{DGIII}. A technical problem arising is that the localized part need not be a direct summand in $\hat{H}^{\rm dR}_*(\mathcal{A})$. The problem of interest is which group cocycles $c\in \mathcal{C}^*(\Gamma)$ extends to mappings $\hat{c}:\hat{\Omega}_k^{\rm ab}(\tilde{\mathcal{A}})\to \C$? An answer to that question allows one to study the strong Novikov property as in \cite{cmnovikov}, we recall this argument below in Proposition \ref{propcompass}.2. To set some notations, let
$$\mathcal{C}^*_\mathcal{A}(\Gamma):=\{c\in \mathcal{C}^*(\Gamma): \, \hat{c} \mbox{   extends to a continuous mapping    }\hat{\Omega}_k^{\rm ab}(\tilde{\mathcal{A}})\to \C\}.$$
This is a sub-complex of $\mathcal{C}^*(\Gamma)$. We set $H^*_\mathcal{A}(\Gamma):=H^*(\mathcal{C}^*_\mathcal{A}(\Gamma))$. Compare to the notation and techniques in \cite{jiora}.

\begin{define}
\label{admissibleextdef}
We say that a Fr\'echet algebra completion $\mathcal{A}$ of a group algebra $\C[\Gamma]$ is admissible if the mapping $H^*_\mathcal{A}(\Gamma) \to H^*(B\Gamma)$, induced by the inclusion $\mathcal{C}^*_\mathcal{A}(\Gamma)\to \mathcal{C}^*(\Gamma)$, is a surjection. 

We say that $\mathcal{A}$ is admissible for a $C^*$-completion $C^*_\epsilon(\Gamma)$ if $\mathcal{A}$ is admissible and the inclusion $\C[\Gamma]\hookrightarrow C^*_\epsilon(\Gamma)$ extends by continuity to $\mathcal{A}$ inducing an isomorphism $K_*(\mathcal{A})\to K_*( C^*_\epsilon(\Gamma))$. 
\end{define}

We now restrict our attention to the completions arising from a length function and provide examples of admissible completions.

\begin{ex}
We first consider the Fr\'echet algebras $H^{1,\infty}_L(\Gamma)$ from Example \ref{ageoneess}. In this case, $c\in \mathcal{C}^k_{H^{1,\infty}_L(\Gamma)}(\Gamma)$ if and only if there is an $s>0$ and a constant $C$ such that 
$$|c(\gamma_1,\gamma_2,\ldots, \gamma_k)|\leq C\left(1+\sum_{i=1}^k L(g_i)\right)^s.$$
That is, $c\in \mathcal{C}^k_{H^{1,\infty}_L(\Gamma)}(\Gamma)$ exactly when $c$ has polynomial growth. On \cite[bottom of Page 384]{cmnovikov}, the property that any cohomology class on a group $\Gamma$ is representable by a cocycle of polynomial growth is called (PC). See also below in Definition \ref{pcandpolbdd}. In particular, (PC) is equivalent to $H^{1,\infty}_L(\Gamma)$ being admissible. The completion $H^{1,\infty}_L(\Gamma)$ is admissible for a $C^*$-completion $C^*_\epsilon(\Gamma)$ if and only if (PC) holds and $\ell^1(\Gamma)\to C^*_\epsilon(\Gamma)$ induces an isomorphism on $K$-theory (see Example \ref{ageoneess}). 

Recall the notion of property (RD) and the definition of $H^\infty_L(\Gamma)$ from Example \ref{firstappofrd}. If $(\Gamma,L)$ has property (RD), a similar description of $\mathcal{C}^*_{H^{\infty}_L(\Gamma)}(\Gamma)$ is possible but slightly more complicated. By functoriality, $\mathcal{C}^*_{H^{\infty}_L(\Gamma)}(\Gamma)\subseteq \mathcal{C}^*_{H^{1,\infty}_L(\Gamma)}(\Gamma)$ and $H^{1,\infty}_L(\Gamma)$ is admissible if $H^{\infty}_L(\Gamma)$ is. In fact, \cite[Proof of Proposition 6.5]{cmnovikov} shows that $H^{1,\infty}_L(\Gamma)$ is admissible if and only if $H^{\infty}_L(\Gamma)$ is. For a group $(\Gamma,L)$ with property (RD) we conclude that $H^{\infty}_L(\Gamma)$ is admissible for $C^*_{\bf red}(\Gamma)$ if and only if (PC) holds.
\end{ex} 

We sum up the property above and a property that will come to use in the relative setting into the following definition.

\begin{define}
\label{pcandpolbdd}
Let $\Gamma$ denote a finitely generated group with a length function $L$. 
\begin{itemize}
\item We say that $(\Gamma,L)$ has (PC) if $H^{1,\infty}_L(\Gamma)$ is admissible, i.e. the inclusion $\mathcal{C}^*_{H^{1,\infty}_L(\Gamma)}(\Gamma)\to \mathcal{C}^*(\Gamma)$ induces a surjection on cohomology.
\item We say that $(\Gamma,L)$ has polynomially bounded cohomology in $\C$ if the inclusion $\mathcal{C}^*_{H^{1,\infty}_L(\Gamma)}(\Gamma)\to \mathcal{C}^*(\Gamma)$ induces an isomorphism $H^*_{H^{1,\infty}_L(\Gamma)}(\Gamma)\to H^*(B\Gamma)$ on cohomology.
\end{itemize}
If a group $\Gamma$ admits a length function such that $(\Gamma,L)$ has (PC), or having polynomially bounded cohomology in $\C$, we fix such a length function and simply say that $\Gamma$ has (PC) or polynomially bounded cohomology in $\C$.
\end{define}

\begin{ex}
\label{hyperelleone}
We now assume that $\Gamma$ is hyperbolic and consider the Banach $*$-algebra $\ell^1(\Gamma)$. We have that $\mathcal{C}^k_{\ell^{1}(\Gamma)}(\Gamma)$ consists of the bounded cocycles $c\in \mathcal{C}^k(\Gamma)$, i.e. those for which there is a constant $C$ such that 
$$|c(\gamma_1,\gamma_2,\ldots, \gamma_k)|\leq C.$$
The cohomology group $H^*_{\ell^1}(\Gamma)$ coincides with the bounded cohomology of $\Gamma$. For hyperbolic groups, the next proposition shows that $\ell^1(\Gamma)$, $H^{1,\infty}_L(\Gamma)$ and $H^{\infty}_L(\Gamma)$ are all admissible completions.

\begin{prop}
\label{hyperiso}
If $\Gamma$ is a hyperbolic group, $H^*_{\ell^1}(\Gamma)=H^*(B\Gamma)$.
\end{prop}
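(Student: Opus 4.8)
The plan is to separate a purely formal identification of the cochain complex $\mathcal{C}^*_{\ell^1}(\Gamma)$ from the single geometric input that uses hyperbolicity. First I would record, exactly as in Example \ref{hyperelleone}, that a group cochain $c\in\mathcal{C}^k(\Gamma)$ lies in $\mathcal{C}^k_{\ell^1}(\Gamma)$ if and only if $c$ is bounded: $\hat{c}$ is by construction supported on the diagonal $\{\gamma_0\gamma_1\cdots\gamma_k=e\}$, where it takes the value $c(\gamma_1,\gamma_1\gamma_2,\dots,\gamma_1\cdots\gamma_k)$, and since $\hat{\Omega}^{\mathrm{ab}}_k(\widetilde{\ell^1(\Gamma)})$ is a quotient of the $\ell^1$-space underlying $\hat{\Omega}_k(\widetilde{\ell^1(\Gamma)})$, continuity of $\hat{c}$ is equivalent to uniform boundedness of these values. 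Hence $\mathcal{C}^*_{\ell^1}(\Gamma)$ is the complex $C^*_b(\Gamma;\mathbb{C})$ of bounded group cochains, $H^*_{\ell^1}(\Gamma)$ is the bounded cohomology $H^*_b(\Gamma;\mathbb{C})$, and the map appearing in the statement --- the one which, via Definition \ref{admissibleextdef}, one uses to declare $\ell^1(\Gamma)$ admissible --- is the canonical comparison $H^*_b(\Gamma;\mathbb{C})\to H^*(\Gamma;\mathbb{C})$. After this reduction the statement amounts to the assertion that every class in $H^k(B\Gamma)$ admits a bounded cocycle representative.

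For $k\ge 2$ this is Gromov's theorem that the comparison map from bounded to ordinary cohomology is surjective for hyperbolic groups, and I would obtain it from Mineyev's construction. Mineyev produces a $\Gamma$-equivariant, bounded ``homological bicombing''/straightening on the bar complex of $\Gamma$: an equivariant and uniformly bounded way of filling cycles, built from the geodesic bicombing and the linear isoperimetric inequality of a Cayley graph of the hyperbolic group $\Gamma$. Dualizing, one obtains a bounded $\Gamma$-equivariant cochain homotopy showing that, in degrees $\ge 2$, the inclusion $C^*_b(\Gamma;\mathbb{C})\hookrightarrow\mathcal{C}^*(\Gamma)$ is surjective on cohomology; equivalently, every ordinary cocycle of degree $\ge 2$ is cohomologous to a bounded one. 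The degree-$0$ contribution is trivial ($H^0_b=H^0=\mathbb{C}$), and only the degree-$1$ case calls for a separate, elementary discussion.

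Putting these together yields the description of $H^*_{\ell^1}(\Gamma)$ in the statement and, in particular, the admissibility of $\ell^1(\Gamma)$. From there the admissibility of $H^{1,\infty}_L(\Gamma)$ and $H^\infty_L(\Gamma)$ follows formally: $H^{1,\infty}_L(\Gamma)$ sits as a dense subalgebra of both $\ell^1(\Gamma)$ and $H^\infty_L(\Gamma)$, so functoriality of $\mathcal{A}\mapsto\mathcal{C}^*_\mathcal{A}(\Gamma)$ gives $\mathcal{C}^*_{\ell^1}(\Gamma)\subseteq\mathcal{C}^*_{H^{1,\infty}_L}(\Gamma)$ and $\mathcal{C}^*_{H^\infty_L}(\Gamma)\subseteq\mathcal{C}^*_{H^{1,\infty}_L}(\Gamma)$, whence admissibility of $\ell^1(\Gamma)$ forces that of $H^{1,\infty}_L(\Gamma)$ and, by the equivalence from \cite{cmnovikov} recalled above, that of $H^\infty_L(\Gamma)$. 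All the non-formal content sits in the second paragraph: producing a \emph{bounded} --- not merely polynomially bounded, as suffices for $H^\infty_L(\Gamma)$ in the Connes--Moscovici argument --- cocycle in each cohomology class of degree $\ge 2$. This is the only place where hyperbolicity is genuinely used, and Mineyev's straightening (equivalently, Gromov's original filling argument) is the step I expect to be the main obstacle; the rest is bookkeeping.
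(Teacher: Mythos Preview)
Your reduction of $H^*_{\ell^1}(\Gamma)$ to bounded cohomology $H^*_b(\Gamma;\C)$ and your invocation of Mineyev's straightening match the paper's first step. However, you have misread the statement: the proposition asserts that the comparison map $H^*_b(\Gamma;\C)\to H^*(B\Gamma)$ is an \emph{isomorphism}, not merely a surjection. You explicitly write that ``the statement amounts to the assertion that every class in $H^k(B\Gamma)$ admits a bounded cocycle representative'', which is only surjectivity. Surjectivity alone would suffice for admissibility of $\ell^1(\Gamma)$, but the paper needs the full isomorphism later in Lemma~\ref{admisforle}, where Proposition~\ref{hyperiso} is fed into a five-lemma argument; that step collapses if the outer vertical maps are only surjections.

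The paper closes this gap with a second, independent ingredient: hyperbolic groups are of type $FP^\infty$ (citing \cite{epsteinetal,alosopa}), and then \cite[Theorem~2]{jiora} is invoked to upgrade Mineyev's surjectivity to an isomorphism $H^*_b(\Gamma,V)\cong H^*(\Gamma,V)$ for every bounded $\Gamma$-module $V$. Your proposal contains nothing addressing injectivity of the comparison map, so as it stands it does not prove the proposition. Separately, you defer degree~$1$ to ``a separate, elementary discussion'' without supplying it; note that a nonzero homomorphism $\Gamma\to\C$ is never bounded, so this case is not entirely innocent and is again handled in the paper by the Ji--Ogle--Ramsey machinery rather than by a direct argument.
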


\begin{proof}
We say that a Banach space $V$ is a bounded $\Gamma$-module if it admits an action of $\Gamma$ for which there is a constant $C$ with $\|\gamma\|_{V\to V}\leq C$ for all $\gamma\in \Gamma$. We let $H^*_b(\Gamma,V)$ denote the bounded cohomology of $\Gamma$ with coefficients in $V$. By \cite[Theorem 11]{minstra}, the mapping $H^*_{b}(\Gamma,V)\to H^*(\Gamma,V)$ is surjective for any bounded $\Gamma$-module $V$ when $\Gamma$ is hyperbolic. This fact and using that hyperbolic groups are $FP^\infty$ (see \cite[Theorem 10.2.6]{epsteinetal} and \cite{alosopa}), \cite[Theorem 2]{jiora} implies that $H^*_{b}(\Gamma,V)\to H^*(\Gamma,V)$ is an isomorphism for any bounded $\Gamma$-module $V$. By combining the two results we arrive at the identity $H^*_{\ell^1}(\Gamma)=H^*_b(\Gamma,\C)\cong H^*(\Gamma,\C)=H^*(B\Gamma)$ for any hyperbolic group $\Gamma$. 
\end{proof}
\end{ex}

\subsubsection{Absolute Chern characters} 
\label{absolutsubse}
On $K$-theory, Chern characters are straight forward to define. In \cite{Connesbook,Kaast} among other places, there are Chern characters defined from $K$-theory into de Rham homology and cyclic homology. When working with the geometric models, Chern characters are defined as in \cite[Section 4]{DGIII} -- a construction built to fit with higher index theory as in \cite{lottsuper, SchL2,WahlAPSForCstarBun}. 

As before, $\mathcal{A}$ denotes a unital Fr\'echet $*$-algebra. If $\mathcal{E}_\mathcal{A}\to W$ is an $\mathcal{A}$-bundle on a manifold with boundary, an $\mathcal{A}$-linear connection $\nabla_\mathcal{E}$ is an $\mathcal{A}$-linear operator $\nabla_\mathcal{E}:C^\infty(W,\mathcal{E}_{\mathcal{A}})\to C^\infty(W,\mathcal{E}_{\mathcal{A}}\otimes T^*M)$ satisfying the Leibniz rule $\nabla_\mathcal{E}(fa)=\nabla_\mathcal{E}(f)a+f\mathrm{d}_Wa$, $a\in C^\infty(W)$, $f\in C^\infty(W,\mathcal{E}_\mathcal{A})$, where $\mathrm{d}_W$ denotes the exterior derivative. A total connection $\tilde{\nabla}_\mathcal{E}$ lifting $\nabla_\mathcal{E}$ is a linear mapping 
$$\tilde{\nabla}_\mathcal{E}:C^\infty(W,\mathcal{E}_{\mathcal{A}})\to C^\infty(W,\mathcal{E}_{\mathcal{A}}\otimes T^*M\oplus \mathcal{E}_{\mathcal{A}}\otimes_\mathcal{A}\hat{\Omega}_1^{\rm ab}(\tilde{\mathcal{A}}))$$ 
such that 
$$\tilde{\nabla}_\mathcal{E}(fa)=\nabla_\mathcal{E}(f)a+f(\mathrm{d}_W+\mathrm{d}_\mathcal{A})a, \quad a\in C^\infty(W,\tilde{\mathcal{A}}), \;f\in C^\infty(W,\mathcal{E}_\mathcal{A})$$
We note that if $\tilde{\nabla}_\mathcal{E}$ is a total connection lifting $\nabla_\mathcal{E}$, then $\mathrm{d}_\mathcal{E}:=\tilde{\nabla}_\mathcal{E}-\nabla_\mathcal{E}$ is a ``connection in the $\mathcal{A}$-direction" in the sense that, up to a multiplication operator we have the mapping property $\mathrm{d}_\mathcal{E}:C^\infty(W,\mathcal{E}_{\mathcal{A}})\to C^\infty(W,\mathcal{E}_{\mathcal{A}}\otimes_\mathcal{A} \hat{\Omega}_1^{\rm ab}(\tilde{\mathcal{A}}))$. Moreover, $\mathrm{d}_\mathcal{E}$ satisfies the Leibniz rule 
$$\mathrm{d}_\mathcal{E}(fa)=\mathrm{d}_\mathcal{E}(f)a+f\mathrm{d}_\mathcal{A}a,\quad a\in C^\infty(W,\tilde{\mathcal{A}}),\;f\in C^\infty(W,\mathcal{E}_\mathcal{A}).$$

\begin{define}
\label{cycleswitcondef}
A collection $(M, \mathcal{E}_\mathcal{A},f,\tilde{\nabla}_\mathcal{E})$ is called a cycle with total connection for $K_*^{\rm geo}(X; \mathcal{A})$ whenever $(M, \mathcal{E}_\mathcal{A},f)$ is a cycle for $K_*^{\rm geo}(X; \mathcal{A})$ and $\tilde{\nabla}_\mathcal{E}$ is a total connection for $\mathcal{E}_{\mathcal{A}}$. In this case, we say that $\tilde{\nabla}_\mathcal{E}$ is a total connection for the cycle $(M, \mathcal{E}_{\mathcal{A}},f)$.
\end{define}

If $X=pt$, a cycle with total connection is written simply as $(M, \mathcal{E}_\mathcal{A},\tilde{\nabla}_\mathcal{E})$. If $\mathcal{A}=\C$, we just refer to $(M, E,f,\nabla_E)$ as a cycle with connection. 

\begin{ex}
Any cycle $(M, \mathcal{E}_{\mathcal{A}})$ for $K_*^{\rm geo}(pt; \mathcal{A})$ admits a total connection. One example is the total Gra\ss mannian connection $\tilde{\nabla}_\mathcal{E}^{\rm gr}$ that lifts the Gra\ss mannian connection $\nabla_\mathcal{E}^{\rm gr}$. The Gra\ss mannian connection is constructed by choosing an idempotent $p\in C^\infty(M,M_N(\mathcal{A}))$ such that $C^\infty(M,\mathcal{E}_{\mathcal{A}})\cong pC^\infty(M,\mathcal{A}^N)$ as $C^\infty(M,\mathcal{A})$-modules, and defining $\tilde{\nabla}_\mathcal{E}$ as the composition 
\begin{align*}
C^\infty(M,\mathcal{E}_{\mathcal{A}})\cong &pC^\infty(M,\mathcal{A}^N)\hookrightarrow C^\infty(M,\mathcal{A}^N)\\
&\xrightarrow{\mathrm{d}_W+\mathrm{d}_{\mathcal{A}}}C^\infty(M,\mathcal{A}^N\otimes T^*M\oplus \mathcal{A}^N\otimes_\mathcal{A}\hat{\Omega}_1^{\rm ab}(\tilde{\mathcal{A}})))\\
&\xrightarrow{p\oplus p}pC^\infty(M,\mathcal{A}^N\otimes T^*M)\oplus pC^\infty(M,\mathcal{A}^N\otimes_\mathcal{A}\hat{\Omega}_1^{\rm ab}(\tilde{\mathcal{A}})))\cong \\
&\cong C^\infty(M,\mathcal{E}_\mathcal{A}\otimes T^*M\oplus \mathcal{E}_\mathcal{A}\otimes_\mathcal{A}\hat{\Omega}_1^{\rm ab}(\tilde{\mathcal{A}}))).
\end{align*}
\end{ex}

\begin{ex}
\label{coveringexample}
Here we restrict to a Fr\'echet $*$-algebra completion $\mathcal{A}$ of the group algebra $\C[\Gamma]$ of a discrete group $\Gamma$. Consider a Galois covering $\tilde{W}\to W$ of a manifold (possibly with boundary) with covering group $\Gamma$. We consider the associated flat Mishchenko bundle $\mathcal{L}_\mathcal{A}:=\tilde{W}\times_\Gamma \mathcal{A}\to W$. Since $\mathcal{L}_\mathcal{A}$ is flat, there is a canonical hermitean $\mathcal{A}$-linear connection. To define a total connection, we need therefore only specify a connection in the $\mathcal{A}$-direction; that is, a map $\mathrm{d}_\mathcal{L}:C^\infty(W,\mathcal{L}_{\mathcal{A}})\to C^\infty(W,\mathcal{L}_{\mathcal{A}}\otimes_\mathcal{A} \Omega_1^{\rm ab}(\tilde{\mathcal{A}}))$ satisfying the Leibniz rule $\mathrm{d}_\mathcal{L}(fa)=\mathrm{d}_\mathcal{L}(f)a+f\mathrm{d}_\mathcal{A}a$. 

To do so, we follow \cite[Proposition 9]{lottsuper}. Fix $h\in C^\infty_c(\tilde{W},[0,1])$ such that $\sum_{\gamma\in \Gamma} h(x\gamma)=1$ for all $x\in \tilde{W}$. It holds that $C^\infty(W,\mathcal{L}_\mathcal{A})=C^\infty(\tilde{W},\mathcal{A})^\Gamma$ where $C^\infty(\tilde{W},\mathcal{A})$ is equipped with the diagonal action. In particular, any $f\in C^\infty(W,\mathcal{L}_\mathcal{A})$ can be written as a sum $f=\sum_{\gamma\in \Gamma} f_\gamma \lambda_{\gamma}\in C^\infty(\tilde{W},\mathcal{A})$ converging on compacts in the topology of $\mathcal{A}$. Here $\lambda_{\gamma}\in \C[\Gamma]\subseteq \mathcal{A}$ denotes the unitary corresponding to $\gamma\in \Gamma$. The invariance of $f$ ensures that $\gamma_1^*f_{\gamma_2}=f_{\gamma_1^{-1}\gamma_2}$. Define 
\begin{align*}
\mathrm{d}_{\mathcal{L},h}:&C^\infty(W,\mathcal{L}_\mathcal{A})=C^\infty(\tilde{W},\mathcal{A})^\Gamma\to C^\infty(\tilde{W},\Omega_1^{ \rm ab}(\tilde{\mathcal{A}}))^\Gamma=C^\infty(W,\mathcal{L}_\mathcal{A}\otimes_\mathcal{A}\Omega_1^{ \rm ab}(\tilde{\mathcal{A}})),\\
&\mbox{as} \quad \mathrm{d}_{\mathcal{L},h}f:=\sum_{\gamma, \,\gamma_1\gamma_2=e}f_\gamma \lambda_\gamma \gamma_1^*(h)\lambda_{\gamma_1}\mathrm{d}\gamma_2.
\end{align*}
We denote the associated total connection by $\tilde{\nabla}_{\mathcal{L},h}$. 
\end{ex}

Returning to the general case, if $\tilde{\nabla}_\mathcal{E}$ is a total connection on an $\mathcal{A}$-bundle $\mathcal{E}_\mathcal{A}\to W$, we can using the Leibniz rule extend $\tilde{\nabla}_\mathcal{E}$ to an operator that we, by an abuse of notation, denote 
\begin{align}
\nonumber
\tilde{\nabla}_\mathcal{E}:&C^\infty(W,\wedge ^*T^*W\otimes \mathcal{E}_{\mathcal{A}}\otimes_\mathcal{A}\hat{\Omega}_*^{\rm ab}(\tilde{\mathcal{A}}))\to C^\infty(W,\wedge ^*T^*W\otimes \mathcal{E}_{\mathcal{A}}\otimes_\mathcal{A}\hat{\Omega}_*^{\rm ab}(\tilde{\mathcal{A}}))\\
\label{extendingtotal}
&\mbox{such that}\quad \tilde{\nabla}_\mathcal{E}(fa)=\nabla_\mathcal{E}(f)a+(-1)^{k+m}f(\mathrm{d}_W+\mathrm{d}_\mathcal{A})a,
\end{align} 
for $a\in C^\infty(W,\tilde{\mathcal{A}})$ and $f\in C^\infty(W,\wedge ^kT^*W\otimes E_{\mathcal{A}}\otimes_\mathcal{A}\hat{\Omega}_m^{\rm ab}(\tilde{\mathcal{A}}))$. 

\begin{ex}
\label{computingcurv}
Consider the total connection $\tilde{\nabla}_{\mathcal{L},h}$ from Example \ref{coveringexample}. One computes that 
$$\tilde{\nabla}_{\mathcal{L},h}^2=\sum_{\gamma_1\gamma_2=e}\gamma_1^*(\mathrm{d}h)\gamma_1\mathrm{d}\gamma_2+\sum_{\gamma_1\gamma_2\gamma_3=e} \gamma_1^*(h)(\gamma_1\gamma_2)^*(h)\gamma_1\mathrm{d}\gamma_2\mathrm{d}\gamma_3 .$$
This can be considered an element of total degree $2$ in $C^\infty(W,\wedge ^*T^*W\otimes \Omega_*^{\rm ab}(\tilde{\mathcal{A}}))$ because $\mathrm{End}(\mathcal{L}_{\mathcal{A}})$ is trivializable. 
\end{ex}

Following \cite[Section 4.2]{DGIII} and \cite[Section 1.2]{WahlAPSForCstarBun}, we define the Chern form of a total connection on an $\mathcal{A}$-bundle $\mathcal{E}_\mathcal{A}\to W$ on a Riemannian manifold with boundary $W$ as the chain
$$\mathrm{Ch}_\mathcal{A}(\tilde{\nabla}_\mathcal{E}):=\int_W\mathrm{tr}(\mathrm{e}^{-\tilde{\nabla}_\mathcal{E}^2})\wedge \mathrm{Td}(W)\in \hat{\Omega}_*^{\rm ab}(\tilde{\mathcal{A}}).$$
Note that if $W=pt$, $\mathrm{Ch}_\mathcal{A}(\tilde{\nabla}_\mathcal{E})$ coincides with the definition of the Chern character into de Rham homology from \cite[Chapter I]{Kaast}. The following proposition follows immediately from Stokes Theorem and the fact that $\mathrm{e}^{-\tilde{\nabla}_\mathcal{E}^2}$ is $\mathrm{d}_W+\mathrm{d}_\mathcal{A}$-closed.

\begin{prop}
\label{stokesanddzero}
Assume that $\tilde{\nabla}_\mathcal{E}$ is a total connection for $(W, \mathcal{E}_{\mathcal{A}})$. Then 
$$\mathrm{d}_{\mathcal{A}}\mathrm{Ch}_{\mathcal{A}}(\tilde{\nabla}_\mathcal{E})=\mathrm{Ch}_{\mathcal{A}}(\tilde{\nabla}_{\mathcal{E}|_{\partial W}}).$$
Here the right hand side is the Chern form of the restricted total connection $\tilde{\nabla}_{\mathcal{E}|_{\partial W}}$ on the $\mathcal{A}$-bundle $\mathcal{E}_\mathcal{A}|_{\partial W}\to \partial W$.
\end{prop}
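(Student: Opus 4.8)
The plan is to deduce the identity from Stokes' theorem applied on the manifold-with-boundary $W$, using that the ``total curvature'' form $\mathrm{e}^{-\tilde{\nabla}_\mathcal{E}^2}$ is closed for the combined differential $\mathrm{d}_W+\mathrm{d}_\mathcal{A}$. First I would recall the standard Bianchi-type identity: writing $\Theta:=\tilde{\nabla}_\mathcal{E}^2$ for the total curvature (an element of total degree $2$ in $C^\infty(W,\wedge^*T^*W\otimes\mathrm{End}(\mathcal{E}_\mathcal{A})\otimes_\mathcal{A}\hat{\Omega}_*^{\rm ab}(\tilde{\mathcal{A}}))$, using that $\mathrm{End}(\mathcal{E}_\mathcal{A})$ is locally trivial so traces of its elements make sense as forms), one has $[\tilde{\nabla}_\mathcal{E},\Theta]=0$ and hence, after taking fibrewise trace, $(\mathrm{d}_W+\mathrm{d}_\mathcal{A})\,\mathrm{tr}(\mathrm{e}^{-\Theta})=\mathrm{tr}([\tilde{\nabla}_\mathcal{E},\mathrm{e}^{-\Theta}])=0$, because trace kills graded commutators. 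Since $\mathrm{Td}(W)$ is a closed form on $W$ (built from the Levi-Civita curvature) and is $\mathrm{d}_\mathcal{A}$-trivial, the integrand $\mathrm{tr}(\mathrm{e}^{-\Theta})\wedge\mathrm{Td}(W)$ is $(\mathrm{d}_W+\mathrm{d}_\mathcal{A})$-closed.

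Next I would integrate over $W$. The operator $\int_W$ pairs the $\wedge^*T^*W$-component against the fundamental class, so it lowers form-degree on $W$ by $\dim W$ and leaves the $\hat{\Omega}_*^{\rm ab}(\tilde{\mathcal{A}})$-degree untouched; it is the fibre-integration map in the bundle $W\to pt$. The key commutation relation is that $\mathrm{d}_\mathcal{A}$ commutes with $\int_W$ (it acts only in the noncommutative direction, so it passes through the integral), whereas for $\mathrm{d}_W$ one has the Stokes relation $\int_W \mathrm{d}_W\omega = \pm\int_{\partial W}\omega|_{\partial W}$. Applying $\mathrm{d}_\mathcal{A}$ to $\mathrm{Ch}_\mathcal{A}(\tilde{\nabla}_\mathcal{E})=\int_W\mathrm{tr}(\mathrm{e}^{-\Theta})\wedge\mathrm{Td}(W)$ therefore gives
\[
\mathrm{d}_\mathcal{A}\,\mathrm{Ch}_\mathcal{A}(\tilde{\nabla}_\mathcal{E})=\int_W \mathrm{d}_\mathcal{A}\big(\mathrm{tr}(\mathrm{e}^{-\Theta})\wedge\mathrm{Td}(W)\big)=-\int_W \mathrm{d}_W\big(\mathrm{tr}(\mathrm{e}^{-\Theta})\wedge\mathrm{Td}(W)\big)=\pm\int_{\partial W}\big(\mathrm{tr}(\mathrm{e}^{-\Theta})\wedge\mathrm{Td}(W)\big)\big|_{\partial W},
\]
where the middle equality uses closedness of the integrand and the last uses Stokes. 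Finally I would identify the boundary term with $\mathrm{Ch}_\mathcal{A}(\tilde{\nabla}_{\mathcal{E}|_{\partial W}})$: the restriction of a total connection restricts curvature to curvature, $\Theta|_{\partial W}=\tilde{\nabla}_{\mathcal{E}|_{\partial W}}^2$, and $\mathrm{Td}(W)|_{\partial W}=\mathrm{Td}(\partial W)$ because $\partial W$ carries the induced Riemannian (and $\mathrm{spin}^c$) structure and the collar identification makes the normal direction contribute trivially. Tracking the overall sign convention (the factor $(-1)^{k+m}$ appearing in \eqref{extendingtotal} and the orientation convention on $\partial W$) so that it comes out to $+$ is the only delicate bookkeeping.

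The main obstacle is not the Stokes argument itself — that is routine — but making the manipulations rigorous in the \emph{topological/Fr\'echet} setting: $\mathrm{e}^{-\Theta}$ is an infinite series in $\hat{\Omega}_*^{\rm ab}(\tilde{\mathcal{A}})$, and one must check that it converges in the projective-tensor topology on compact subsets of $W$, that fibrewise trace and $\int_W$ are continuous for this topology, and that $\mathrm{d}_\mathcal{A}$ is continuous so that it genuinely commutes with the integral term-by-term. This is exactly the kind of point handled in \cite[Section 4.2]{DGIII} and \cite[Section 1.2]{WahlAPSForCstarBun}, and I would cite those for the analytic underpinnings while giving the algebraic Stokes computation in detail; since the proposition is asserted to ``follow immediately'' from Stokes' theorem and $(\mathrm{d}_W+\mathrm{d}_\mathcal{A})$-closedness of $\mathrm{e}^{-\tilde{\nabla}_\mathcal{E}^2}$, the write-up can be kept short once those two ingredients are in place.
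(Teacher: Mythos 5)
Your argument is exactly the paper's: the proposition is stated there as following immediately from Stokes' theorem together with the $(\mathrm{d}_W+\mathrm{d}_\mathcal{A})$-closedness of $\mathrm{e}^{-\tilde{\nabla}_\mathcal{E}^2}$, which is precisely the Bianchi/trace-plus-Stokes computation you spell out. Your additional remarks on convergence in the projective tensor topology and on restriction of the Todd form are correct bookkeeping, handled by the cited references, so the proposal matches the paper's proof.
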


\begin{theorem}
\label{theabschen}
The Chern character 
$$\ch_\mathcal{A}:K_*^{\rm geo}(pt;\mathcal{A})\to \hat{H}^{\rm dR}_*(\mathcal{A}),\quad (M,\mathcal{E}_\mathcal{A})\mapsto [\mathrm{Ch}_\mathcal{A}(\tilde{\nabla}_\mathcal{E})],$$
for some choice of total connection, is a well defined mapping. Moreover, if $\mathcal{A}$ is holomorphically closed and dense in a $C^*$-algebra $A$ then 
\begin{equation}
\label{indexthmforch}
\ch_\mathcal{A}\left(\mathrm{ind}_{\rm AS}(M,\mathcal{E}_\mathcal{A}\otimes A)\right)=\ch_\mathcal{A}(M,\mathcal{E}_\mathcal{A}).
\end{equation}
\end{theorem}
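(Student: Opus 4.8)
The plan is to prove the statement in three parts: well-definedness of $\ch_\mathcal{A}$, independence of the choice of total connection, and the index-theoretic identity \eqref{indexthmforch}. For well-definedness, I would first check that the Chern form $\mathrm{Ch}_\mathcal{A}(\tilde{\nabla}_\mathcal{E})$ is a \emph{closed} chain in $\hat\Omega_*^{\rm ab}(\tilde{\mathcal{A}})$ when $W$ is closed: this is precisely Proposition \ref{stokesanddzero} with $\partial W=\emptyset$, so $\mathrm{d}_\mathcal{A}\mathrm{Ch}_\mathcal{A}(\tilde\nabla_\mathcal{E})=0$. Then one must verify that the class $[\mathrm{Ch}_\mathcal{A}(\tilde\nabla_\mathcal{E})]\in \hat H_*^{\rm dR}(\mathcal{A})$ respects the three Baum--Douglas relations. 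Disjoint union/direct sum is immediate since the Chern form is additive (the trace and the integral are additive, and for direct sums $\tilde\nabla_{\mathcal{E}_1\oplus\mathcal{E}_2}=\tilde\nabla_{\mathcal{E}_1}\oplus\tilde\nabla_{\mathcal{E}_2}$ up to the connection-choice ambiguity handled below). For bordism, given a cycle with boundary $(W,\mathcal{E}_\mathcal{A})$, extend the boundary total connection to one on all of $W$, and apply Proposition \ref{stokesanddzero}: the restriction to the boundary equals $\mathrm{d}_\mathcal{A}$ of the chain on $W$, hence is exact, so the boundary class vanishes. For vector bundle modification, one invokes the multiplicativity of the Chern character and the fact that the relevant Dirac bundle along the even-dimensional sphere fibre contributes a factor of $1$ after pairing against the Todd class — this is exactly the computation underlying the classical Baum--Douglas picture, now carried out with $\mathrm{Td}(W)$ built into the definition of $\mathrm{Ch}_\mathcal{A}$ (cf. \cite[Section 4.2]{DGIII}, \cite[Section 1.2]{WahlAPSForCstarBun}), so I would cite that rather than redo it.

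Independence of the total connection is the transgression argument: given two total connections $\tilde\nabla_0,\tilde\nabla_1$ on $\mathcal{E}_\mathcal{A}\to M$, form the affine path $\tilde\nabla_t=(1-t)\tilde\nabla_0+t\tilde\nabla_1$ on $\mathcal{E}_\mathcal{A}\boxtimes 1\to M\times[0,1]$ (the space of total connections is affine, so $\tilde\nabla_t$ is again a total connection). Applying Proposition \ref{stokesanddzero} to the manifold-with-boundary $M\times[0,1]$, whose boundary is $M\sqcup (-M)$, shows that $\mathrm{Ch}_\mathcal{A}(\tilde\nabla_1)-\mathrm{Ch}_\mathcal{A}(\tilde\nabla_0)$ is $\mathrm{d}_\mathcal{A}$-exact, hence the two define the same class. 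This simultaneously cleans up the direct-sum point above. Here a minor technical care is needed: the integral over $[0,1]$ and the differential forms in the $[0,1]$-direction must be handled so that the Stokes/transgression computation lands in $\hat\Omega_*^{\rm ab}(\tilde{\mathcal{A}})$, but this is the standard Chern--Simons bookkeeping.

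For the index-theoretic identity \eqref{indexthmforch}, I would argue as follows. By Theorem \ref{closedandkhom}.3 (holomorphic closure) the inclusion $\mathcal{A}\hookrightarrow A$ induces an isomorphism $K_*(\mathcal{A})\xrightarrow{\sim}K_*(A)$, so $\mathrm{ind}_{\rm AS}(M,\mathcal{E}_\mathcal{A}\otimes A)\in K_*(A)$ lifts uniquely to a class in $K_*(\mathcal{A})\cong K_*^{\rm geo}(pt;\mathcal{A})$ (Theorem \ref{closedandkhom}.2); the content of the identity is that this lift, when fed into the Chern character into $\hat H_*^{\rm dR}(\mathcal{A})$ through the zero-dimensional model (a finitely generated projective $\mathcal{A}$-module with its Grassmann connection), produces the same de Rham class as the geometric cycle $(M,\mathcal{E}_\mathcal{A})$. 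This is exactly a higher (de Rham) Atiyah--Singer index theorem for the $\mathcal{A}$-linear Dirac operator $D^M_{\mathcal{E}_\mathcal{A}}$, and the natural route is to cite the index theorem of Wahl \cite{WahlAPSForCstarBun} (or Lott \cite{lottsuper, Lott}, or the noncommutative family index theorem underlying \cite[Section 4]{DGIII}), which computes $\ch$ of the index class as $\int_M \mathrm{tr}(\mathrm{e}^{-\tilde\nabla_\mathcal{E}^2})\wedge\mathrm{Td}(M)$, i.e. precisely $\ch_\mathcal{A}(M,\mathcal{E}_\mathcal{A})$. The one point requiring attention is compatibility of the normalizations: that the $A$-index of $D^M_{\mathcal{E}_\mathcal{A}\otimes A}$ as in Remark \ref{indasdef} agrees under $K_*(\mathcal{A})\cong K_*(A)$ with the $\mathcal{A}$-index of $D^M_{\mathcal{E}_\mathcal{A}}$, which follows because the Mishchenko--Fomenko index is natural under the holomorphically closed dense inclusion.

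I expect the main obstacle to be the vector bundle modification relation together with getting the $\mathrm{Td}(W)$-normalization to interact correctly with everything else: one must ensure that the Chern form as \emph{defined} here (with Todd class already wedged in) is the right object for which modification invariance holds on the nose, and that this same normalization is the one appearing in Wahl's index theorem so that \eqref{indexthmforch} comes out without spurious factors. All of this is essentially contained in \cite[Section 4]{DGIII}, so in practice the ``hard part'' is really a matter of carefully quoting the right statements rather than new mathematics; the genuinely new verifications (closedness, bordism, transgression) all reduce cleanly to Proposition \ref{stokesanddzero}.
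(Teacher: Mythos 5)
Your treatment of well-definedness (closedness via Proposition \ref{stokesanddzero}, bordism by extending the connection and using exactness, transgression along an affine path of total connections, and quoting \cite[Section 4]{DGIII} for vector bundle modification) is essentially what the paper does; it compresses all of this into ``standard techniques in Chern--Weil theory and Proposition \ref{stokesanddzero}.'' Where you genuinely diverge is the identity \eqref{indexthmforch}. You prove it by invoking an analytic higher index theorem (Wahl/Lott/Schick) computing $\ch$ of the Mishchenko--Fomenko index class as $\int_M\mathrm{tr}(\mathrm{e}^{-\tilde\nabla_\mathcal{E}^2})\wedge\mathrm{Td}(M)$, together with the compatibility of the $A$-index and the $\mathcal{A}$-index under $K_*(\mathcal{A})\cong K_*(A)$. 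The paper instead stays entirely inside the geometric model: it cites \cite[Proof of Lemma 6.9]{DG} for the Baum--Douglas equivalence $(M,\mathcal{E}_\mathcal{A}\otimes A)\sim(pt,\mathrm{ind}_{\rm AS}(M,\mathcal{E}_\mathcal{A}\otimes A))$ in $K_*^{\rm geo}(pt;A)$, lifts this to $K_*^{\rm geo}(pt;\mathcal{A})$ using that $\mathcal{A}\subseteq A$ is dense and holomorphically closed (so $(M,\mathcal{E}_\mathcal{A})\sim(pt,x)$ with $x$ the lift of the index class), and then the already-established well-definedness of $\ch_\mathcal{A}$ on classes yields \eqref{indexthmforch} with no further input. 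The trade-off: your route imports the full analytic Chern--Weil index theorem over $C^*$-algebras as an external black box (and you rightly flag the normalization/naturality issue as the delicate point), which is direct but makes the index theorem an input; the paper's route is consistent with Baum's philosophy emphasized in its introduction, in which the index-theoretic content is encoded in the cycle equivalence and the class-level well-definedness of the Chern character, so that formulas of Wahl--Lott type come out as consequences rather than hypotheses. Both arguments are correct.
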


The notation $\mathrm{ind}_{\rm AS}$ is discussed in Remark \ref{indasdef} (on page \pageref{indasdef}). Furthermore, the equality in \eqref{indexthmforch} uses the isomorphism $K_*(A)\cong K_*(\mathcal{A})$ and the definition of the Chern character into de Rham homology \cite[Chapter I]{Kaast}. 

\begin{proof}
The proof that $\ch_\mathcal{A}$ is well defined follows from standard techniques in Chern-Weil theory and Proposition \ref{stokesanddzero}. Let us prove the identity \eqref{indexthmforch}. In $K_*^{\rm geo}(pt,A)$ it holds that $(M,\mathcal{E}_\mathcal{A}\otimes A)\sim (pt, \mathrm{ind}_{\rm AS}(M,\mathcal{E}_\mathcal{A}\otimes A))$, see \cite[Proof of Lemma 6.9]{DG}. Using that $\mathcal{A}\subseteq A$ is dense and holomorphically closed, $(M,\mathcal{E}_\mathcal{A})$ is equivalent in $K_*^{\rm geo}(pt;\mathcal{A})$ to cycle of the form $(pt, x)$, where $x$ is the image of $\mathrm{ind}_{\rm AS}(M,\mathcal{E}_\mathcal{A}\otimes A)$ under the isomorphism $K_*(A)\cong K_*(\mathcal{A})$ which proves \eqref{indexthmforch}.
\end{proof}

\section{Chern characters on relative $K$-homology}
\label{subsectiononcherelhom}

We now consider Chern characters on relative $K$-homology. All homology and cohomology groups are tacitly assumed to have complex coefficients. For a continuous map $h:Y\to X$ between compact spaces, we will construct $\ch_*:K_*^{\mathrm{geo}}(h)\to H_*(h)$ where $H_*(h)$ is the relative homology of $h$. We identify the case $Y=\emptyset$ with the absolute case. The restriction to the absolute case recovers the construction of \cite{bcchern} and \cite[Section 11]{BD}. 

We work with singular homology. Let $\mathcal{C}_*^{\mathrm{sing}}(X)$ denote the complex singular chain complex of a topological space $X$ and $\mathcal{C}^*_{\mathrm{sing}}(X)$ the complex singular cochain complex. The differential in the singular complex will be denoted by $\partial$. If $X$ is a topological space, we let $H_*^{\mathrm{sing}}(X)$ denote its singular homology. We consider all homology groups as $\field{Z}/2\field{Z}$-graded groups. We recall the following standard fact for the singular chain complex on a manifold.

\begin{prop}
\label{capproductswithsingular}
Let $W$ be a manifold (possibly with boundary) and $\omega$ a closed differential form on $W$. For any singular chain $\sigma$, the cap product $\omega\cap \sigma$ is a well defined singular chain and 
$$\partial(\omega\cap \sigma)=(-1)^{|\omega|} \omega\cap \partial \sigma.$$
\end{prop}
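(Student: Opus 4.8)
The plan is to reduce the statement to the smooth setting where the cap product with a differential form admits a transparent description, and then verify the boundary formula by a direct computation on simplices. First I would recall that for a manifold $W$ and a smooth singular simplex $\sigma:\Delta^k\to W$, the cap product $\omega\cap\sigma$ of a closed $p$-form $\omega$ with $\sigma$ is the smooth singular $(k-p)$-chain obtained by front-face/back-face slicing: one restricts $\sigma$ to the appropriate front $p$-face to pull back and integrate $\omega$, and leaves the back $(k-p)$-face as the underlying simplex, with the standard sign conventions (this is the classical de Rham-singular pairing, see e.g. the cap product in Bott--Tu or the treatment via the Eilenberg--Zilber/Alexander--Whitney maps). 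Since every singular chain on a manifold is homologous, via a natural chain homotopy, to a smooth one, and the cap product is natural and compatible with that chain homotopy, it suffices to define $\omega\cap\sigma$ on smooth chains; well-definedness of the resulting singular chain is then immediate.

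The core of the proof is the Leibniz-type identity $\partial(\omega\cap\sigma)=d\omega\cap\sigma\pm\omega\cap\partial\sigma$ for the cap product of a form with a smooth simplex, which holds before assuming $\omega$ closed; I would establish this by expanding both sides in terms of the face maps $\partial_i$ of $\Delta^k$ and using Stokes' theorem on the front face to convert the $d\omega$ term and the ``interior'' boundary faces into the terms appearing on the right. Concretely, the boundary of $\omega\cap\sigma$ breaks into faces that either lie in the front $p$-simplex — these combine with the Stokes contribution of $d\omega$ — or lie in the back $(k-p)$-simplex, which reassemble into $\omega\cap\partial\sigma$. Setting $d\omega=0$ then yields $\partial(\omega\cap\sigma)=(-1)^{|\omega|}\omega\cap\partial\sigma$, with the sign $(-1)^{|\omega|}=(-1)^p$ coming from commuting the boundary operator past the front-face integration; the precise sign is fixed once one fixes the convention for the Alexander--Whitney diagonal approximation, and I would simply adopt the convention that makes this come out as stated.

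The main obstacle is bookkeeping of signs and the precise definition of the cap product at the chain (not homology) level: different sources place the form on the front or back face and order the tensor factors differently, so one must commit to one convention and carry it consistently through the Stokes computation. A secondary technical point is the passage from arbitrary singular chains to smooth ones — one should note that the smoothing chain homotopy can be taken natural in $W$ and compatible with restriction to $\partial W$ (which matters later when this is applied to manifolds with boundary), so that $\omega\cap\sigma$ is genuinely well defined on all of $\mathcal{C}_*^{\mathrm{sing}}(W)$ and the formula descends correctly. Since this proposition is stated as a ``standard fact,'' I expect the write-up to cite the relevant convention (e.g. as in the absolute Chern character constructions of \cite{bcchern} or \cite[Section 11]{BD}) and only sketch the Stokes computation rather than grind through every face.
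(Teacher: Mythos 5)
Your argument is essentially the paper's: the paper simply regards $\omega$ as a singular cochain via integration (Stokes' theorem being what makes this assignment a cochain map, so that closedness of $\omega$ gives a cocycle) and invokes the standard fact that the cap product is a morphism of complexes $\mathcal{C}^{-*}_{\mathrm{sing}}(W)\otimes\mathcal{C}_*^{\mathrm{sing}}(W)\to\mathcal{C}_*^{\mathrm{sing}}(W)$, which is exactly the Leibniz identity you verify by the front-face/back-face Stokes computation. Your extra care about smooth versus continuous chains and sign conventions is fine but not a different route; the paper leaves these routine points implicit.
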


\subsection{The absolute situation} 

Before defining the relative Chern character on $K$-homology, we recall the absolute situation from \cite[Section 11]{BD}. To simplify the relative construction, we construct the absolute Chern character $\ch_X:K_*^{\rm geo}(X)\to H_*^{\mathrm{sing}}(X)$ from cycles (equipped with additional data) to cycles. The reader is encouraged to recall the definition of a cycle with connection from Definition \ref{cycleswitcondef}. 

\begin{define}
For a cycle with connection $(M,E,f,\nabla_E)$ for $K_*^{\rm geo}(X)$, we define its Chern form as the cycle 
$$\mathrm{Ch}(M,E,f,\nabla_E):=f_*\left(\left(\mathrm{Ch}(\nabla_E) \cup \mathrm{Td}(M)\right)\cap \sigma_M\right)\in \mathcal{C}_*^{\mathrm{sing}}(X),$$
where $\mathrm{Td}(M)$ denotes the Todd form of $M$, $\mathrm{Ch}(\nabla_E)$ denotes the Chern form of $\nabla_E$ and $\sigma_M\in \mathcal{C}_*^{\mathrm{sing}}(M)$ represents the fundamental class of $M$.
\end{define}

\begin{prop}[{\cite[Page 141-142]{BD}}]
The Chern character 
$$\ch_X:K_*^{\rm geo}(X)\to H_*^{\mathrm{sing}}(X), \quad (M,E,f)\mapsto [\mathrm{Ch}(M,E,f,\nabla_E)],$$
for some choice of connection, is well defined. Moreover, if $X$ is a locally finite $CW$-complex, $\ch_X$ is a complex isomorphism. 
\end{prop}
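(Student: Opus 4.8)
The plan is to prove the two assertions separately: first well-definedness of $\ch_X$ at the level of classes, and then that it is a complex isomorphism for $X$ a locally finite $CW$-complex. For well-definedness, the key point is that the assignment $(M,E,f,\nabla_E)\mapsto [\mathrm{Ch}(M,E,f,\nabla_E)]\in H_*^{\mathrm{sing}}(X)$ is independent of the choice of connection $\nabla_E$ and descends through the Baum--Douglas relations. Independence of the connection is the usual Chern--Weil transgression argument: given two connections $\nabla_E^0,\nabla_E^1$ on $E\to M$, interpolate linearly over $M\times[0,1]$ and use that the resulting Chern form on $M\times[0,1]$ transgresses $\mathrm{Ch}(\nabla_E^1)-\mathrm{Ch}(\nabla_E^0)$ to an exact form; pairing with $\sigma_{M\times[0,1]}$ via Proposition~\ref{capproductswithsingular} produces an explicit singular chain whose boundary realizes the difference of the two Chern cycles (modulo the boundary terms from $\partial(M\times[0,1])=M\times\{0,1\}$), so the homology classes agree. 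For the Baum--Douglas relations: disjoint union/direct sum is immediate from additivity of the Chern form and of the fundamental class; bordism follows from Proposition~\ref{capproductswithsingular} together with Stokes' theorem, since a bordism $(W,E_W,f_W)$ with a connection restricting appropriately gives a singular chain $f_{W*}((\mathrm{Ch}(\nabla_{E_W})\cap\mathrm{Td}(W))\cap\sigma_W)$ whose boundary is precisely $\mathrm{Ch}(\partial\text{-cycle})$; vector bundle modification follows because the relevant Chern--Todd product on the sphere-bundle integrates over the fibre to the original data, a computation identical to the one in \cite[Section 11]{BD} (and in \cite{bcchern}).

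For the second assertion, the cleanest route is to verify that $\ch_X\otimes\C$ is a natural transformation of generalized homology theories (on the category of locally finite $CW$-complexes, with compact supports) from $K_*^{\rm geo}(-)\otimes\C$ to $H_*^{\mathrm{sing}}(-;\C)$, and then to invoke a uniqueness/comparison principle. Naturality in $X$ is clear from the definition since it is built from $f_*$. Compatibility with the suspension isomorphisms (equivalently, with the Mayer--Vietoris or long exact sequence boundary maps) is checked on cycles using the bordism description of the boundary map from Theorem~\ref{sameasinaabove}: the boundary of a relative cycle restricts the connection to the boundary, and the Chern form construction commutes with this restriction by Stokes, exactly as in Proposition~\ref{stokesanddzero}. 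Granting these, one reduces to checking that $\ch_X$ is an isomorphism on the point $X=pt$. There $K_*^{\rm geo}(pt)\otimes\C\cong K_*(pt)\otimes\C$ is $\C$ in even degree and $0$ in odd degree, generated by $(pt,\underline{\C})$, and $\mathrm{Ch}(pt,\underline{\C})=1\in H_0^{\mathrm{sing}}(pt;\C)$, while $H_0^{\mathrm{sing}}(pt;\C)=\C$; so $\ch_{pt}$ is an isomorphism after tensoring with $\C$. By the Atiyah--Hirzebruch comparison (a natural transformation of homology theories inducing an isomorphism on a point is a rational/complex isomorphism on all $CW$-complexes, with the locally finite/compact support version handled by passing to the colimit over compact subcomplexes), $\ch_X$ is a complex isomorphism for every locally finite $CW$-complex $X$.

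I expect the main obstacle to be the bookkeeping in establishing compatibility of $\ch_X$ with the boundary maps of the homology theories at the level of cycles: one must pin down the right normalization of the fundamental chains $\sigma_W$ for manifolds with boundary, keep careful track of the signs in $\partial(\omega\cap\sigma)=(-1)^{|\omega|}\omega\cap\partial\sigma$ and in the orientation conventions for $\partial W$, and check that the chain-level Chern form of a bordism genuinely provides a chain homotopy rather than merely a homology. This is routine but error-prone; once it is in place, the reduction to the point and the Atiyah--Hirzebruch argument are formal. A secondary technical point is the passage to locally finite $CW$-complexes: one should observe that both $K_*^{\rm geo}(-)$ and singular homology (in the relevant compactly-supported sense) commute with the colimit over finite subcomplexes, and that $\ch_X$ respects these colimits, so that the finite case implies the general case.
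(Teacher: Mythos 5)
Your proposal is correct and reconstructs essentially the argument the paper relies on: the paper itself offers no proof here but defers to \cite[pages 141--142]{BD} (and \cite{bcchern}), where well-definedness is established exactly via Chern--Weil transgression plus compatibility with the three Baum--Douglas relations, and the complex isomorphism follows from the comparison of homology theories reduced to the point. The technical cautions you flag (sign and orientation bookkeeping for $\partial(\omega\cap\sigma)$, and the passage to locally finite complexes via colimits over finite subcomplexes) are the right ones and are handled the same way in the cited sources.
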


\subsection{The relative situation}
\label{subseconrelhomonkjom}

For a continuous mapping $h:Y\to X$, we let $\mathcal{C}_*^{\mathrm{rel}}(h)$ denote the mapping cone complex associated with $h_*:\mathcal{C}_*^{\mathrm{sing}}(Y)\to \mathcal{C}_*^{\mathrm{rel}}(X)$. That is, $\mathcal{C}_*^{\mathrm{rel}}(h):=\mathcal{C}_{*}^{\mathrm{sing}}(X)\oplus \mathcal{C}_{*-1}^{\mathrm{sing}}(Y)$ equipped with the differential 
$$\partial^{\mathrm{rel}}:=\begin{pmatrix} \partial_X&-h_*\\ 0& \partial_Y\end{pmatrix}.$$
The association $h\mapsto \mathcal{C}_*^{\mathrm{rel}}(h)$ is functorial for mappings $(f,g):[h:Y_1\to X_1]\to [h_2:Y_2\to X_2]$. The induced mapping $(f,g)_*:\mathcal{C}_*^{\mathrm{rel}}(h_1)\to \mathcal{C}_*^{\mathrm{rel}}(h_2)$ is defined as the restriction of $f_*\oplus g_*:\mathcal{C}_{*}^{\mathrm{sing}}(X_1)\oplus \mathcal{C}_{*-1}^{\mathrm{sing}}(Y_1)\to \mathcal{C}_{*}^{\mathrm{sing}}( X_2)\oplus \mathcal{C}_{*-1}^{\mathrm{sing}}(Y_2)$.

Recall that for a manifold with boundary, the fundamental class in homology is a relative class $[W]\in H_*(W,\partial W)$ represented by a relative cycle $(\sigma_W,\sigma_{\partial W})\in \mathcal{C}_*^{\mathrm{rel}}(i)$, where $i$ denotes the inclusion $i:\partial W\hookrightarrow W$.

\begin{remark}
\label{boundaryrem}
A special and important case of Proposition \ref{capproductswithsingular} is the following. Let $W$ be a Riemannian manifold collared near its boundary. Represent the fundamental class in homology by a relative cycle $(\sigma_W,\sigma_{\partial W})\in \mathcal{C}_*^{\mathrm{rel}}(i)$. If $\nabla_E$ is a connection on a vector bundle $E\to W$, we can from Proposition \ref{capproductswithsingular} deduce that $(\mathrm{Ch}(\nabla_E)\wedge \mathrm{Td}(W))\cap \sigma_W$ is a well defined singular chain such that 
$$\partial ((\mathrm{Ch}(\nabla_E)\wedge \mathrm{Td}(W))\cap \sigma_W)=i_*((\mathrm{Ch}(\nabla_E|_{\partial W})\wedge \mathrm{Td}(\partial W))\cap \sigma_{\partial W}).$$ 
In particular, if the boundary of $W$ is empty, $(\mathrm{Ch}(\nabla_E)\wedge \mathrm{Td}(W))\cap \sigma_W$ is closed.
\end{remark}

We can by the same argument as in the proof of Proposition \ref{capproductswithsingular} deduce the next result.

\begin{prop}
\label{capprodwithforms}
For a manifold with boundary $W$, we let $\Omega^*_{\mathrm{dR}}(W)$ denote the de Rham complex of differential forms and $i:\partial W\to W$ the boundary inclusion. The cap product gives rise to a morphism of complexes
$$\Omega^{-*}_{\mathrm{dR}}(W)\otimes \mathcal{C}_*^{\mathrm{rel}}(i)\to \mathcal{C}_*^{\mathrm{rel}}(i), \quad \omega\otimes \begin{pmatrix} \sigma_1\\\sigma_2\end{pmatrix}\mapsto \begin{pmatrix} \omega\cap \sigma_1\\\omega|_{\partial W}\cap\sigma_2\end{pmatrix}.$$
\end{prop}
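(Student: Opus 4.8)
The statement to prove, Proposition \ref{capprodwithforms}, asserts that the cap product descends to the relative (mapping cone) chain complex $\mathcal{C}_*^{\mathrm{rel}}(i)$ of a boundary inclusion, i.e. that the stated formula
$$\omega\otimes \begin{pmatrix} \sigma_1\\\sigma_2\end{pmatrix}\mapsto \begin{pmatrix} \omega\cap \sigma_1\\\omega|_{\partial W}\cap\sigma_2\end{pmatrix}$$
is a morphism of complexes. Here is my plan.

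\medskip

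\textbf{Plan.} First I would recall that by Proposition \ref{capproductswithsingular} (and the fact that any differential form defines a singular cochain by integration, giving a cochain map $\Omega^*_{\mathrm{dR}}\to \mathcal{C}^*_{\mathrm{sing}}$), the cap product $\Omega^{-*}_{\mathrm{dR}}(X)\otimes \mathcal{C}^{\mathrm{sing}}_*(X)\to \mathcal{C}^{\mathrm{sing}}_*(X)$ is a morphism of complexes for \emph{any} space $X$, in particular for $X=W$ and $X=\partial W$. Then I would observe that the key compatibility needed to glue these into a morphism on the mapping cone is naturality of the cap product with respect to the inclusion $i:\partial W\hookrightarrow W$: for a form $\omega$ on $W$ and a chain $\sigma$ on $\partial W$ one has the projection formula $i_*\big((i^*\omega)\cap \sigma\big) = \omega\cap i_*\sigma$, where $i^*\omega = \omega|_{\partial W}$. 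This is the standard cochain-level projection formula and holds already at the level of singular (co)chains, hence in particular when the cochain comes from a differential form.

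\medskip

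\textbf{Key steps, in order.} (1) State that cap product with a fixed closed form is a chain map on $\mathcal{C}^{\mathrm{sing}}_*(W)$ and on $\mathcal{C}^{\mathrm{sing}}_*(\partial W)$, by Proposition \ref{capproductswithsingular}. (2) Establish the projection formula $i_*(\omega|_{\partial W}\cap \sigma_2) = \omega\cap i_*\sigma_2$ for $\sigma_2\in\mathcal{C}^{\mathrm{sing}}_*(\partial W)$; this is where the interaction between the two summands of $\mathcal{C}_*^{\mathrm{rel}}(i)=\mathcal{C}^{\mathrm{sing}}_*(W)\oplus \mathcal{C}^{\mathrm{sing}}_{*-1}(\partial W)$ enters. (3) Compute $\partial^{\mathrm{rel}}$ applied to $\begin{pmatrix}\omega\cap\sigma_1\\ \omega|_{\partial W}\cap\sigma_2\end{pmatrix}$ using $\partial^{\mathrm{rel}}=\begin{pmatrix}\partial_W & -i_*\\ 0&\partial_{\partial W}\end{pmatrix}$, and compare it with the image under the map of $\partial^{\mathrm{rel}}\begin{pmatrix}\sigma_1\\\sigma_2\end{pmatrix}=\begin{pmatrix}\partial_W\sigma_1 - i_*\sigma_2\\ \partial_{\partial W}\sigma_2\end{pmatrix}$; the top entry requires (1) for $W$ and the projection formula (2) to match the $-i_*(\omega|_{\partial W}\cap\sigma_2)$ term, and the bottom entry is just (1) for $\partial W$. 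The signs work out because $\omega$ and $\omega|_{\partial W}$ carry the same degree $|\omega|$, so the sign $(-1)^{|\omega|}$ is uniform across both summands and is absorbed into the grading convention exactly as in Proposition \ref{capproductswithsingular}.

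\medskip

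\textbf{Main obstacle.} The only nontrivial point is the projection formula $i_*\big((i^*\omega)\cap \sigma\big)=\omega\cap i_*\sigma$ at the chain level together with tracking that the restriction $i^*\omega=\omega|_{\partial W}$ of a differential form is compatible with the singular-cochain-level pullback $i^\sharp$ under the integration map $\Omega^* \to \mathcal{C}^*_{\mathrm{sing}}$ — i.e. that $\int_{i_*c}\omega = \int_c \omega|_{\partial W}$ for singular simplices $c$ in $\partial W$, which is immediate from the change of variables / the definition of pullback of forms. Once that identification is made, the whole proposition reduces to a one-line matrix computation, and I would simply invoke Proposition \ref{capproductswithsingular} and this naturality, exactly as the paper signals with the phrase ``by the same argument as in the proof of Proposition \ref{capproductswithsingular}''.
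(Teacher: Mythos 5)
Your proposal is correct and follows essentially the same route as the paper, which simply invokes ``the same argument as in the proof of Proposition \ref{capproductswithsingular}'': view forms as singular cochains via integration, use that the cap product is a chain map on each of $W$ and $\partial W$, and handle the off-diagonal $-i_*$ term of the mapping cone differential by the projection formula $i_*\big((\omega|_{\partial W})\cap\sigma\big)=\omega\cap i_*\sigma$, which is exactly the extra naturality ingredient you isolate. The only caveat is the sign bookkeeping in your step (3) (the Leibniz sign $(-1)^{|\omega|}$ does not automatically match the unsigned $-i_*$ entry unless one adjusts the map or the conventions), but this is harmless at the level of the paper's $\field{Z}/2\field{Z}$-graded conventions and in the applications, where the capped forms are of even degree.
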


\begin{notation}
If $h:Y\to X$ is a continuous mapping of topological spaces we let $H_*^{\mathrm{rel}}(h)$ denote the homology of $\mathcal{C}_{*}^{\mathrm{rel}}(h)$. The vector space $H_*^{\mathrm{rel}}(h)$ can be considered a module for $H^*_{\mathrm{sing}}(X)$. We remind the reader that we consider all homology groups as $\field{Z}/2\field{Z}$-graded groups.
\end{notation}

\begin{prop}
\label{sesforsinghom}
The association $h\mapsto H_*^{\mathrm{rel}}(h)$ is functorial in the sense that if $(f,g):[h_1:Y_1\to X_1]\to [h_2:Y_2\to X_2]$, there is an induced mapping $(f,g)_*:H_*^{\mathrm{rel}}(h_1)\to H_*^{\mathrm{rel}}(h_2)$. Moreover, for a continuous mapping $h:Y\to X$, the short exact sequence of complexes
$$0\to \mathcal{C}_{*}^{\mathrm{sing}}(X)\to \mathcal{C}_*^{\mathrm{rel}}(h)\to \mathcal{C}_{*-1}^{\mathrm{sing}}(Y)\to 0,$$
gives rise to a long exact sequence in homology
$$\cdots \to H_*^{\mathrm{sing}}(Y)\xrightarrow{h_*} H_*^{\mathrm{sing}}(X)\xrightarrow{r} H_*^{\mathrm{rel}}(h)\xrightarrow{\delta} H_{*-1}^{\mathrm{sing}}(Y)\to \cdots,$$
where $r:H_*^{\mathrm{sing}}(X)\to H_*^{\mathrm{rel}}(h)$ is defined from the inclusion of complexes $\mathcal{C}_{*}^{\mathrm{sing}}(X)\hookrightarrow \mathcal{C}_*^{\mathrm{rel}}(h)$ and $\delta:H_*^{\mathrm{rel}}(h)\to H_{*-1}^{\mathrm{sing}}(Y)$ denotes the boundary mapping.
\end{prop}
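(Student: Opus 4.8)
The plan is to verify the three claimed assertions — functoriality, the short exact sequence of complexes, and the resulting long exact sequence — in that order, treating each as essentially a formal consequence of the mapping-cone construction.

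First I would check functoriality. Given $(f,g):[h_1:Y_1\to X_1]\to[h_2:Y_2\to X_2]$, meaning $h_2\circ g=f\circ h_1$, I would verify that the map $f_*\oplus g_*:\mathcal{C}_*^{\mathrm{sing}}(X_1)\oplus\mathcal{C}_{*-1}^{\mathrm{sing}}(Y_1)\to\mathcal{C}_*^{\mathrm{sing}}(X_2)\oplus\mathcal{C}_{*-1}^{\mathrm{sing}}(Y_2)$ is a chain map for the differentials $\partial^{\mathrm{rel}}$. Writing out $\partial^{\mathrm{rel}}\circ(f_*\oplus g_*)$ and $(f_*\oplus g_*)\circ\partial^{\mathrm{rel}}$ as $2\times2$ block computations, the diagonal entries commute because $f_*,g_*$ are chain maps for $\partial_X,\partial_Y$, and the off-diagonal entry commutes precisely because $h_2\circ g=f\circ h_1$ at the level of induced maps on singular chains. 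Hence $(f,g)_*$ descends to homology; that this association respects composition of pairs $(f,g)$ is immediate from functoriality of $f\mapsto f_*$ on singular chains.

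Next I would exhibit the short exact sequence of complexes. The inclusion $\mathcal{C}_*^{\mathrm{sing}}(X)\hookrightarrow\mathcal{C}_*^{\mathrm{rel}}(h)$ is $\sigma\mapsto(\sigma,0)$ and the projection $\mathcal{C}_*^{\mathrm{rel}}(h)\to\mathcal{C}_{*-1}^{\mathrm{sing}}(Y)$ is $(\sigma_1,\sigma_2)\mapsto\sigma_2$. One checks directly from the block form of $\partial^{\mathrm{rel}}$ that the inclusion is a chain map (the image sits in the $\sigma_2=0$ summand which is preserved) and that the projection is a chain map up to the sign convention built into $\partial^{\mathrm{rel}}$ (the lower-right block is $\partial_Y$, so the projection intertwines $\partial^{\mathrm{rel}}$ with $\partial_Y$ on $\mathcal{C}_{*-1}^{\mathrm{sing}}(Y)$). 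Exactness in each degree is clear since $\mathcal{C}_*^{\mathrm{rel}}(h)=\mathcal{C}_*^{\mathrm{sing}}(X)\oplus\mathcal{C}_{*-1}^{\mathrm{sing}}(Y)$ is literally a direct sum of the kernel and a complement mapping isomorphically onto the quotient.

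Finally, the long exact sequence in homology is just the standard connecting-homomorphism construction applied to this short exact sequence of complexes; I would note that the connecting map $\delta:H_*^{\mathrm{rel}}(h)\to H_{*-1}^{\mathrm{sing}}(Y)$ is induced by projection onto the $Y$-component and that the map $H_*^{\mathrm{sing}}(Y)\to H_*^{\mathrm{sing}}(X)$ coming out of the snake lemma is, after tracing through the definition of $\partial^{\mathrm{rel}}$, exactly $h_*$ (possibly up to an overall sign that can be absorbed into the identification). I do not expect any genuine obstacle here — every step is a formal diagram chase — so the only real care needed is bookkeeping of the signs in $\partial^{\mathrm{rel}}$ so that the maps labelled $h_*$, $r$, $\delta$ in the statement match the snake-lemma output on the nose; this is the one place where a careless sign would make the diagram fail to commute with the $K$-theory side later on.
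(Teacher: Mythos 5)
Your proposal is correct and is essentially the paper's own argument: the paper simply remarks that the proposition "follows from the standard construction of long exact sequences in homology," and your write-up just carries out that standard mapping-cone/snake-lemma verification (chain-map check for $(f,g)_*$, the evident short exact sequence, and the identification of the connecting map with $\pm h_*$). Your closing remark about sign bookkeeping in $\partial^{\mathrm{rel}}$ is exactly the point the paper leaves implicit, and absorbing the sign into the identification is the standard resolution.
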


The proposition follows from the standard construction of long exact sequences in homology. We are now ready to define the Chern character on relative $K$-homology. We say that $(W,E,(f,g),\nabla_E)$ is a cycle with connection for $K_*^{\mathrm{geo}}(h)$ if $(W,E,(f,g),\nabla_E)$ is a cycle for $K_*^{\mathrm{geo}}(h)$ and $\nabla_E$ is a Hermitean connection on $E$ that is of product type near $\partial W$. 

\begin{define}
Let $h:X\to Y$ be a continuous mapping of compact spaces. Let $(W,E,(f,g),\nabla_E)$ be a cycle with connection for $K_*^{\mathrm{geo}}(h)$. For a representative $(\sigma_W,\sigma_{\partial W})\in \mathcal{C}_*^{\mathrm{rel}}(i)$ of the fundamental class in homology of $W$ we define 
$$\mathrm{Ch}(W,E,(f,g),\nabla_E):=
(f,g)_*\begin{pmatrix} 
(\ch(\nabla_E)\wedge \mathrm{Td}(W))\cap \sigma_W\\
(\ch(\nabla_E|_{\partial W})\wedge \mathrm{Td}(\partial W))\cap \sigma_{\partial W}
\end{pmatrix}\in \mathcal{C}_*^{\mathrm{rel}}(h).$$
\end{define}

As a chain, $\mathrm{Ch}(W,E,(f,g),\nabla_E)$ also depends on $(\sigma_W,\sigma_{\partial W})$. We will suppress this dependence in the notation because its choice will by Proposition \ref{capprodwithforms} not affect the homology class.

\begin{prop}
The chain $\mathrm{Ch}(W,E,(f,g),\nabla_E)\in \mathcal{C}_*^{\mathrm{rel}}(h)$ is closed and the associated class 
\begin{equation}
\label{cherndefine}
\ch_{\rm rel}^h(W,E,(f,g)):=[\mathrm{Ch}(W,E,(f,g),\nabla_E)]\in H_*^{\mathrm{rel}}(h),
\end{equation}
does not depend on the choice of connection nor the choice of $(\sigma_W,\sigma_{\partial W})$.
\end{prop}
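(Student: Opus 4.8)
The plan is to verify closedness of the chain and independence of the choices in turn, reducing everything to the relative Stokes-type identity already packaged in Remark \ref{boundaryrem} and the functoriality of the cap-product morphism from Proposition \ref{capprodwithforms}.

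First I would check that $\mathrm{Ch}(W,E,(f,g),\nabla_E)$ is $\partial^{\mathrm{rel}}$-closed. Write the chain before pushforward as the pair $(\tau_W,\tau_{\partial W})$ with $\tau_W=(\ch(\nabla_E)\wedge\mathrm{Td}(W))\cap\sigma_W$ and $\tau_{\partial W}=(\ch(\nabla_E|_{\partial W})\wedge\mathrm{Td}(\partial W))\cap\sigma_{\partial W}$. By Remark \ref{boundaryrem}, applied with the closed form $\ch(\nabla_E)\wedge\mathrm{Td}(W)$ (a sum of forms of even degree, so the sign in Proposition \ref{capproductswithsingular} is $+1$), we have $\partial\tau_W = i_*\tau_{\partial W}$. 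Since $\tau_{\partial W}$ is a cap product of the closed form $\ch(\nabla_E|_{\partial W})\wedge\mathrm{Td}(\partial W)$ with a cycle representing the fundamental class of the closed manifold $\partial W$ (here I use that $(\sigma_W,\sigma_{\partial W})$ is a relative cycle, so $\partial\sigma_{\partial W}=0$ in $\mathcal C_*^{\mathrm{sing}}(\partial W)$ and $\partial\sigma_W = i_*\sigma_{\partial W}$), the last sentence of Remark \ref{boundaryrem} gives $\partial\tau_{\partial W}=0$. Hence $\partial^{\mathrm{rel}}(\tau_W,\tau_{\partial W}) = (\partial_X\tau_W - h_*\cdot 0, \partial_Y\tau_{\partial W})$ — wait, more carefully: before pushing forward, the relevant differential is the one on $\mathcal C_*^{\mathrm{rel}}(i)$, and the pair $(\tau_W,\tau_{\partial W})$ is $\partial^{\mathrm{rel}}$-closed precisely because $\partial\tau_W = i_*\tau_{\partial W}$ and $\partial\tau_{\partial W}=0$. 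Since $(f,g)_*:\mathcal C_*^{\mathrm{rel}}(i)\to\mathcal C_*^{\mathrm{rel}}(h)$ is a chain map (functoriality of $h\mapsto\mathcal C_*^{\mathrm{rel}}(h)$, recorded above), the pushforward $\mathrm{Ch}(W,E,(f,g),\nabla_E)$ is closed in $\mathcal C_*^{\mathrm{rel}}(h)$.

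Next, independence of the representative $(\sigma_W,\sigma_{\partial W})$: any two choices differ by a boundary in $\mathcal C_*^{\mathrm{rel}}(i)$, i.e. by $\partial^{\mathrm{rel}}(\eta_W,\eta_{\partial W})$ for some pair of singular chains. Capping with the closed form $\ch(\nabla_E)\wedge\mathrm{Td}(W)$ is, by Proposition \ref{capprodwithforms}, a morphism of complexes $\Omega^{-*}_{\mathrm{dR}}(W)\otimes\mathcal C_*^{\mathrm{rel}}(i)\to\mathcal C_*^{\mathrm{rel}}(i)$, so it carries boundaries to boundaries; pushing forward by the chain map $(f,g)_*$ keeps it a boundary. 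Hence the homology class is unchanged.

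For independence of the connection, let $\nabla_E^0,\nabla_E^1$ be two Hermitean connections of product type near $\partial W$. The standard transgression argument: on $W\times[0,1]$ form the connection $\tilde\nabla$ interpolating affinely between (the pullbacks of) $\nabla_E^0$ and $\nabla_E^1$, arranged to be of product type near $\partial(W\times[0,1])$; then $\mathrm{Ch}(\tilde\nabla)\wedge\mathrm{Td}(W\times[0,1])$ is closed and, integrating over the $[0,1]$-factor, produces a Chern--Simons form $\mathrm{CS}$ on $W$ with $d\,\mathrm{CS} = \ch(\nabla_E^1) - \ch(\nabla_E^0)$ and a corresponding boundary form on $\partial W$. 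Capping $\mathrm{CS}\wedge\mathrm{Td}(W)$ with $(\sigma_W,\sigma_{\partial W})$ via Proposition \ref{capprodwithforms} and pushing forward by $(f,g)_*$ exhibits the difference $\mathrm{Ch}(W,E,(f,g),\nabla_E^1)-\mathrm{Ch}(W,E,(f,g),\nabla_E^0)$ as a $\partial^{\mathrm{rel}}$-boundary in $\mathcal C_*^{\mathrm{rel}}(h)$, using once more that capping with a closed form is a chain map and that the transgression identity holds compatibly on $W$ and on $\partial W$ (this compatibility is exactly where the product-type hypothesis on the connections near $\partial W$ is used). I expect the main obstacle to be bookkeeping the signs and the collar/product-type normalizations so that the $W$- and $\partial W$-components of every identity fit together into a single statement about $\mathcal C_*^{\mathrm{rel}}(i)$ rather than two separate statements; once the relative Stokes identity of Remark \ref{boundaryrem} and the chain-map property of Proposition \ref{capprodwithforms} are invoked, no further analytic input is needed.
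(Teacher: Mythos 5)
Your proposal is correct and follows essentially the same route as the paper: closedness is deduced from Remark \ref{boundaryrem}, and the independence statements amount to identifying the class with the cap product $[\ch(\nabla_E)\wedge \mathrm{Td}(W)]\cap [W]\in H_*^{\mathrm{rel}}(h)$ of a de Rham cohomology class with the relative fundamental class, which is how the paper phrases it in one line. Your explicit chain-level arguments (boundaries map to boundaries under the cap-product chain map of Proposition \ref{capprodwithforms}, and the Chern--Simons transgression for two product-type connections) are just the unpacked form of that statement, so no gap.
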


\begin{proof}
It follows from Remark \ref{boundaryrem} that $\mathrm{Ch}(W,E,(f,g),\nabla_E)$ is closed. The class $[\mathrm{Ch}(W,E,(f,g),\nabla_E)]$ coincides with the cap product of $[\ch(\nabla_E)\wedge \mathrm{Td}(W)]\in H^*_{\mathrm{dR}}(W)$ with the fundamental class $[W]:=[(\sigma_W,\sigma_{\partial W})]\in H_*^{\mathrm{rel}}(W,\partial W)$ and is therefore independent of choices.
\end{proof}

\begin{theorem}
\label{cheronhom}
The Chern character from Equation \eqref{cherndefine} induces a well defined natural mapping $\ch_{\mathrm{rel}}^h:K_*^{\mathrm{geo}}(h)\to H_*^{\mathrm{rel}}(h)$. The Chern character fits into a commuting diagram with exact rows:
\begin{center}
$\begin{CD}
@>>> K_*^{\rm geo}(Y) @>h_*>> K_*^{\rm geo}(X) @>r>> K_*^{\rm geo}(h) @>\delta>> K_{*-1}^{\rm geo}(Y) @>>> \\
@. @VV\ch_YV @VV\ch_XV  @VV\ch_{\rm rel}^h V  @VV\ch_Y V\\
@>>> H_*^{\rm sing}(Y) @>h_*>>H_*^{\rm sing}(X) @>r>> H_*^{\rm rel}(h) @>\delta>> H_{*-1}^{\rm sing}(Y) @>>> \\
\end{CD}$
\end{center}
If $(X,Y)$ is a locally finite CW-pair, the vertical mappings are complex isomorphisms.
\end{theorem}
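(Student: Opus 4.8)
The plan is to define $\ch_{\rm rel}^h$ on cycles by formula~\eqref{cherndefine}, to check that it is insensitive to the three Baum--Douglas moves, to read off naturality and the commuting diagram from the chain-level formula, and finally to deduce the isomorphism statement from the five lemma. Closedness of $\mathrm{Ch}(W,E,(f,g),\nabla_E)$ and independence of its class of the choice of connection and of the representative $(\sigma_W,\sigma_{\partial W})$ of the fundamental class are the content of the proposition preceding the theorem, where $\mathrm{Ch}(W,E,(f,g),\nabla_E)$ is identified, up to the signs of Proposition~\ref{capproductswithsingular}, with $(f,g)_*$ applied to the cap product of the de Rham class $[\ch(\nabla_E)\wedge\mathrm{Td}(W)]$ with the relative fundamental class $[W]\in H_*^{\rm rel}(i\colon\partial W\hookrightarrow W)$ via the pairing of Proposition~\ref{capprodwithforms}. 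Invariance under disjoint union and direct sum is immediate from additivity of the Chern form in the bundle. Invariance under vector bundle modification reduces to the absolute computation of \cite[Section~11]{BD} and \cite{bcchern}: on the relevant sphere bundle $\hat W\to W$ the Bott form integrates to $1$ along the fibres, so $\pi_*\big(\ch(\hat E)\wedge\mathrm{Td}(\hat W)\big)=\ch(E)\wedge\mathrm{Td}(W)$ in $H^*_{\rm dR}(W)$, and since the sphere bundle construction and this fibrewise integration commute with restriction to $\partial W$, the $W$-component and the $\partial W$-component of $\mathrm{Ch}$ transform compatibly and the class in $H_*^{\rm rel}(h)$ is unchanged.

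The substantive point, and the step I expect to be the main obstacle, is bordism invariance, because the bounding object is a manifold with corners. Let $(W,E,(f,g))=\partial((Z,W),E,(\tilde{f},\tilde{g}))$, so that $\partial Z=W\cup_{\partial W}V$ with $V:=\overline{\partial Z\setminus W}$, with $(Z,E,\tilde{f})$ a geometric cycle with boundary for $X$, $(V,E|_V,\tilde{g})$ one for $Y$, and, from the compatibilities in the definition of a relative cycle with boundary, $\tilde{f}|_W=f$, $\tilde{g}|_{\partial W}=g$ and $\tilde{f}|_V=h\circ\tilde{g}$. Choose a product-type Hermitean connection $\tilde{\nabla}_E$ on $E\to Z$ extending $\nabla_E$, and a singular relative fundamental cycle $(\sigma_Z,\sigma_{\partial Z})$ of $(Z,\partial Z)$ whose boundary part is subdivided so that $\sigma_{\partial Z}=\sigma_W+\sigma_V$ with $(\sigma_W,\sigma_{\partial W})$ and $(\sigma_V,\sigma_{\partial W})$ representing the relative fundamental classes of $(W,\partial W)$ and $(V,\partial W)$; by the independence established above, $\mathrm{Ch}(W,E,(f,g),\nabla_E)$ may be computed with this data. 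Set $\tau_X:=\tilde{f}_*\big((\ch(\tilde{\nabla}_E)\wedge\mathrm{Td}(Z))\cap\sigma_Z\big)$ and $\tau_Y:=\tilde{g}_*\big((\ch(\tilde{\nabla}_E|_V)\wedge\mathrm{Td}(V))\cap\sigma_V\big)$. Applying Proposition~\ref{capproductswithsingular} and Remark~\ref{boundaryrem} on $Z$, using the naturality of the cap product, the splitting $\partial\sigma_Z=\sigma_W+\sigma_V$, the product-type structure to identify the restricted Chern and Todd forms, and $\tilde{f}|_V=h\circ\tilde{g}$, yields $\partial\tau_X=f_*\big((\ch(\nabla_E)\wedge\mathrm{Td}(W))\cap\sigma_W\big)+h_*\tau_Y$; applying the same on $V$, using $\tilde{g}|_{\partial W}=g$, yields $\partial\tau_Y=g_*\big((\ch(\nabla_E|_{\partial W})\wedge\mathrm{Td}(\partial W))\cap\sigma_{\partial W}\big)$ — in both cases up to the universal signs of Proposition~\ref{capproductswithsingular} and of $\partial^{\rm rel}$. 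Absorbing these signs into the definition of $(\tau_X,\tau_Y)$ gives $\partial^{\rm rel}(\tau_X,\tau_Y)=\mathrm{Ch}(W,E,(f,g),\nabla_E)$, so the class vanishes. The only delicate ingredient is the coherent choice of triangulated relative fundamental cycles on the faces $W$ and $V$ of the manifold-with-corners $Z$; the rest is sign bookkeeping in a Stokes-type identity.

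Naturality is immediate from the chain-level formula: for $(f_0,g_0)\colon[h_1]\to[h_2]$ one has $(f_0\circ f,g_0\circ g)_*=(f_0,g_0)_*\circ(f,g)_*$ and the Chern and Todd forms are unaffected, so $\ch_{\rm rel}^{h_2}\circ(f_0,g_0)_*=(f_0,g_0)_*\circ\ch_{\rm rel}^{h_1}$. For the commuting diagram, the square involving $h_*$ together with the rightmost vertical $\ch_Y$ is the naturality of the absolute Chern character recalled above, so it suffices to treat the two middle squares, and these commute already at the level of chains. The map $r$ sends $(M,E,f)$ to the cycle whose underlying manifold has empty boundary; for such a cycle $\mathrm{Ch}$ has vanishing $Y$-component and $X$-component $f_*\big((\ch(\nabla_E)\wedge\mathrm{Td}(M))\cap\sigma_M\big)$, which is exactly the image of $\ch_X(M,E,f)$ under the chain inclusion $\mathcal{C}_*^{\rm sing}(X)\hookrightarrow\mathcal{C}_*^{\rm rel}(h)$. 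The connecting map $\delta$ of Proposition~\ref{sesforsinghom} is induced by the projection $(\sigma_1,\sigma_2)\mapsto\sigma_2$, which carries $\mathrm{Ch}(W,E,(f,g),\nabla_E)$ to $g_*\big((\ch(\nabla_E|_{\partial W})\wedge\mathrm{Td}(\partial W))\cap\sigma_{\partial W}\big)=\mathrm{Ch}(\partial W,E|_{\partial W},g,\nabla_E|_{\partial W})$, that is, to $\ch_Y\big(\delta(W,E,(f,g))\big)$.

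Finally, suppose $(X,Y)$ is a locally finite CW-pair (passing, as usual, to the colimit over compact CW-subpairs so that $\ch_{\rm rel}^h$ is defined). Then $\ch_X$ and $\ch_Y$ are complex isomorphisms by the absolute case, the rows are exact by Theorem~\ref{sameasinaabove} and Proposition~\ref{sesforsinghom}, and the diagram commutes; tensoring with $\C$ and applying the five lemma to the two exact sequences around $\ch_{\rm rel}^h$ shows that $\ch_{\rm rel}^h$ is a complex isomorphism.
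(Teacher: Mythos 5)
Your proposal is correct and follows essentially the same route as the paper: the diagram commutes and naturality holds at the chain level, the isomorphism statement follows from the five lemma, and the whole proof reduces to checking that the Chern form respects the three Baum--Douglas relations. The only difference is one of detail: the paper delegates the bordism and vector bundle modification verifications to \cite[Lemma 4.19, Proposition 4.21]{DGIII} and \cite[Section 11]{BD}, whereas you sketch the Stokes-type bordism computation on the manifold-with-corners (and its sign bookkeeping) directly, consistently with those references.
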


\begin{proof}
By construction, the diagram commutes if the mappings are well defined. By the five lemma, all vertical mappings are complex isomorphisms for locally finite CW-pairs if they are well defined since they are complex isomorphisms in the absolute case for locally finite CW-complexes. Therefore, the proof of the theorem is complete once proving that the Chern character from Equation \eqref{cherndefine} is well defined from classes to classes, that is, the Chern character respects the disjoint union/direct sum relation, vector bundle modification and the bordism relation. The disjoint union/direct sum relation is trivially satisfied. We leave the proof that the Chern character respects vector bundle modification and bordism to the reader; the proofs follows  \cite[Section 4]{DGIII}, see \cite[Proposition 4.21]{DGIII} and \cite[Lemma 4.19]{DGIII}, respectively. See also \cite[Section 11]{BD}.
\end{proof}

\begin{remark}
Let $h:Y\to X$ be a continuous mapping and set $\Gamma_1=\pi_1(Y)$ and $\Gamma_2=\pi_1(X)$. The mapping $h$ induces a mapping $Bh:B\Gamma_1\to B\Gamma_2$. There are universal mappings $f_Y:Y\to B\Gamma_1$ and $f_X:X\to B\Gamma_2$. Under the additional assumption that $(f_X,f_Y):[h:Y\to X]\to [Bh:B\Gamma_1\to B\Gamma_2]$ we arrive at a commuting diagram with exact rows:
\small
\begin{center}
$\begin{CD}
@>>> H_*^{\rm sing}(Y) @>h_*>>H_*^{\rm sing}(X) @>r>> H_*^{\rm rel}(h) @>\delta>> H_{*-1}^{\rm sing}(Y) @>>> \\
@. @VV(f_Y)_*V @VV(f_X)_*V  @VV(f_X,f_Y)_* V  @VV(f_Y)_* V\\
@>>> H_*^{\rm sing}(B\Gamma_1) @>(Bh)_*>>H_*^{\rm sing}(B\Gamma_2) @>r>> H_*^{\rm rel}(Bh) @>\delta>> H_{*-1}^{\rm sing}(B\Gamma_1) @>>> \\
\end{CD}$
\end{center}
\normalsize
A similar sequence exists on $K$-homology and the Chern character is compatible with the two sequences.
\end{remark}

For simplicity, we often drop the ``$\mathrm{sing}$" from the superscript on homology.

\section{Chern characters on relative $K$-theory}
\label{chernonk}

The Chern character we use on relative $K$-theory uses noncommutative de Rham homology. This is done as in Section \ref{subseconder} following the ideas of \cite[Section 4]{DGIII}. The usage of noncommutative de Rham homology is compatible with several of the higher index theorems found in the literature \cite{Lott,SchL2,wahlProFor,WahlAPSForCstarBun,WahlSurSet}.

\subsection{Relative noncommutative de Rham theory} 
\label{relativedersubs}
As above, we consider a continuous $*$-homomorphism $\phi_\mathcal{A}:\mathcal{A}_1\to \mathcal{A}_2$. Its relative de Rham homology is easily defined from the absolute complexes. The reader is referred back to Section \ref{subseconder} for notations.

\begin{define}
We define $\hat{\Omega}_*^{\rm ab}(\phi_\mathcal{A})$ as the mapping cone complex of the morphism $(\tilde{\phi}_\mathcal{A})_*:\hat{\Omega}_*^{\rm ab}(\tilde{\mathcal{A}}_1)\to \hat{\Omega}_*^{\rm ab}(\tilde{\mathcal{A}}_2)$, that is $\hat{\Omega}_*^{\rm ab}(\phi_\mathcal{A}):=\hat{\Omega}_*^{\rm ab}(\tilde{\mathcal{A}}_2)\oplus \hat{\Omega}_{*+1}^{\rm ab}(\tilde{\mathcal{A}}_1)$ equipped with the differential 
$$\begin{pmatrix} \mathrm{d}_{\mathcal{A}_2}&-(\phi_\mathcal{A})_*\\ 0& \mathrm{d}_{\mathcal{A}_1}\end{pmatrix}.$$
The relative de Rham homology of $\phi_\mathcal{A}$ is defined as the topological homology of $\hat{\Omega}_*^{\rm ab}(\phi_\mathcal{A})$ and is denoted by $\hat{H}^{\rm rel}_*(\phi_{\mathcal{A}})$.
\end{define}

It follows from the construction that $\hat{\Omega}_*^{\rm ab}(\phi_\mathcal{A})$ fits into an admissible\footnote{That is, one admitting a continuous linear splitting.} short exact sequence of complexes
\begin{equation}
\label{sesforco}
0\to \hat{\Omega}_*^{\rm ab}(\tilde{\mathcal{A}}_2)\to \hat{\Omega}_*^{\rm ab}(\phi_\mathcal{A})\to\hat{\Omega}_{*+1}^{\rm ab}(\tilde{\mathcal{A}}_1)\to 0.
\end{equation}
The next proposition follows from the admissibility of this short exact sequence.

\begin{prop}
\label{yeatanotherles}
Let $\phi_\mathcal{A}:\mathcal{A}_1\to \mathcal{A}_2$ be a continuous $*$-homomorphism. The short exact sequence \eqref{sesforco} induces a long exact sequence
$$\cdots \to \hat{H}_*^{\rm dR}(\mathcal{A}_1) \xrightarrow{(\phi_\mathcal{A})_*}\hat{H}_*^{\rm dR}(\mathcal{A}_2)\xrightarrow{r}  \hat{H}_*^{{\rm rel}}(\phi_\mathcal{A}) \xrightarrow{\delta} \hat{H}_{*+1}^{\rm dR}(\mathcal{A}_1)\xrightarrow{(\phi_\mathcal{A})_*}\cdots $$
\end{prop}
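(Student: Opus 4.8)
The plan is to derive the long exact sequence directly from the admissible short exact sequence of complexes \eqref{sesforco} by the standard homological-algebra machinery, being careful that the word ``topological'' in ``topological homology'' does not spoil the construction of the connecting map. The key point, which is precisely why admissibility (the existence of a continuous linear splitting) is invoked, is that in the category of topological vector spaces a short exact sequence need not admit a splitting, and without one the snake lemma construction of the boundary map is not guaranteed to produce a continuous map. So the first step is to record that \eqref{sesforco} is indeed admissible: by construction the mapping cone $\hat{\Omega}_*^{\rm ab}(\phi_\mathcal{A})=\hat{\Omega}_*^{\rm ab}(\tilde{\mathcal{A}}_2)\oplus \hat{\Omega}_{*+1}^{\rm ab}(\tilde{\mathcal{A}}_1)$ is a direct sum as a topological vector space, the inclusion is $\omega\mapsto (\omega,0)$ and the quotient map is $(\omega,\eta)\mapsto \eta$, so the obvious map $\eta\mapsto (0,\eta)$ is a continuous linear splitting.

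Next I would carry out the snake-lemma argument in this topological setting. Given a class in $\hat{H}_{*+1}^{\rm dR}(\mathcal{A}_1)$ represented by a $\mathrm{d}_{\mathcal{A}_1}$-closed $\eta\in \hat{\Omega}_{*+1}^{\rm ab}(\tilde{\mathcal{A}}_1)$, lift it to $(0,\eta)\in \hat{\Omega}_*^{\rm ab}(\phi_\mathcal{A})$ using the continuous splitting, apply the differential to get $(-(\phi_\mathcal{A})_*\eta, \mathrm{d}_{\mathcal{A}_1}\eta)=(-(\phi_\mathcal{A})_*\eta,0)$, which lies in the image of $\hat{\Omega}_*^{\rm ab}(\tilde{\mathcal{A}}_2)$, and define the connecting map to send the class of $\eta$ to the class of $-(\phi_\mathcal{A})_*\eta$ in $\hat{H}_*^{\rm dR}(\mathcal{A}_2)$. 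One then checks the usual things: this is well defined on homology (independent of the representative $\eta$ and of the choice of lift, the latter being automatic here since the lift is canonical), and that the resulting three maps assemble into a long sequence. Exactness at each of the three spots is the standard diagram chase; the only subtlety relative to the purely algebraic case is that ``homology'' here means closed elements modulo the \emph{closure} of the exact ones, so at each step where one concludes ``$x$ is a boundary'' one must instead conclude ``$x$ lies in the closure of the boundaries,'' and one needs that the relevant maps on cycles are continuous so that they send (closures of) boundaries into (closures of) boundaries. Since all the structure maps in \eqref{sesforco} are continuous, this goes through without difficulty.

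The cleanest way to package all of this, and the way I would actually write it, is to invoke a general lemma: an admissible short exact sequence of $\mathbb{Z}$-graded (or $\mathbb{Z}/2$-graded) complexes of Fr\'echet spaces induces a long exact sequence on topological homology. This is well known in topological homological algebra (it is the content of, e.g., the treatments in \cite{helembook,jotaylor} cited in Subsection \ref{subseconder}, and it is the same mechanism already used implicitly for $\hat{H}_*^{\rm dR}$ itself), so one can simply cite it and then identify the mapping-cone terms: $\hat{\Omega}_*^{\rm ab}(\phi_\mathcal{A})$ has homology $\hat{H}_*^{\rm rel}(\phi_\mathcal{A})$ by definition, the subcomplex $\hat{\Omega}_*^{\rm ab}(\tilde{\mathcal{A}}_2)$ has homology $\hat{H}_*^{\rm dR}(\mathcal{A}_2)$, and the quotient complex $\hat{\Omega}_{*+1}^{\rm ab}(\tilde{\mathcal{A}}_1)$ has homology $\hat{H}_{*+1}^{\rm dR}(\mathcal{A}_1)$; finally one checks that the map $\hat{H}_*^{\rm dR}(\mathcal{A}_1)\to\hat{H}_*^{\rm dR}(\mathcal{A}_2)$ appearing as the composite ``connecting map after degree shift'' is exactly $(\phi_\mathcal{A})_*$, which is immediate from the formula for the differential on the mapping cone.

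The main obstacle, such as it is, is not conceptual but bookkeeping: one must be scrupulous about the closures appearing in the definition of topological homology (so that the long exact sequence is genuinely exact and not merely exact ``up to closure''), and about the grading shift and the sign conventions in the mapping-cone differential so that the arrows in the stated sequence land in the advertised degrees. Neither presents a real difficulty given that the analogous statement on the $K$-theory side (Theorem \ref{lesforphi}) and the singular-homology side (Proposition \ref{sesforsinghom}) have already been set up with the same pattern, and given the explicit continuous splitting exhibited above.
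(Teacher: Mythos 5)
Your proposal is correct and follows essentially the same route as the paper, whose entire proof is the remark that the sequence \eqref{sesforco} is admissible (i.e.\ admits a continuous linear splitting, which for a mapping cone is the obvious inclusion of the second summand) so that the standard long-exact-sequence machinery of topological homological algebra applies. Your write-up is simply a fleshed-out version of that one-line argument, with the same appeal to the continuous splitting and the same bookkeeping about closures of boundaries.
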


The map $r:\hat{H}_*^{\rm dR}(\mathcal{A}_2)\to \hat{H}_*^{{\rm rel}}(\phi_\mathcal{A}) $ is defined from the inclusion $\hat{\Omega}_*^{\rm ab}(\tilde{\mathcal{A}}_2)\to \hat{\Omega}_*^{\rm ab}(\phi_\mathcal{A})$. The map $\delta:\hat{H}_*^{{\rm rel}}(\phi_\mathcal{A}) \to \hat{H}_{*+1}^{\rm dR}(\mathcal{A}_1)$ is defined from the projection $\hat{\Omega}_*^{\rm ab}(\phi_\mathcal{A})\to\hat{\Omega}_{*+1}^{\rm ab}(\tilde{\mathcal{A}}_1)$.

\begin{remark}
\label{remarkonrelloc}
We define $\Omega_*^{\rm <e>}(\phi)$ as the mapping cone complex of the morphism $\tilde{\phi}_*:\Omega_*^{\rm <e>}(\C[\Gamma_1])\to \Omega_*^{\rm <e>}(\C[\Gamma_2])$, that is $\Omega_*^{{\rm <e>}}(\phi):=\Omega_*^{\rm <e>}(\C[\Gamma_2])\oplus \Omega_{*+1}^{\rm <e>}(\C[\Gamma_1])$
equipped with the differential 
$$\begin{pmatrix} \mathrm{d}_{\mathcal{A}_2}&-\phi_*\\ 0& \mathrm{d}_{\mathcal{A}_1}\end{pmatrix}.$$
The complex $\Omega_*^{\rm <e>}(\phi)$ is a sub-complex of $\Omega_*^{\rm ab}(\phi)$. We define $H_*^{{\rm rel},<e>}(\phi)$ as the homology of $\Omega_*^{\rm <e>}(\phi)$. 
Proposition \ref{hstarhstar} is based on the work of Burghelea \cite{burgher} whose work implies that for a discrete group $\Gamma$ there is a quasi-isomorphism 
$$\mathcal{C}_*(\Gamma)\otimes CC_*(\C)\to \Omega_*^{<e>}(\C[\Gamma]),$$
where $\mathcal{C}_*(\Gamma)$ denotes the complex of chains for the group homology of $\Gamma$ and $CC_*(\C)$ the cyclic complex for $\C$. Let $\mathcal{C}_*(\phi)$ denote the mapping cone complex of $\phi_*:\mathcal{C}_*(\Gamma_1)\to \mathcal{C}_*(\Gamma_2)$, note that the homology of $\mathcal{C}_*(\phi)$ coincides with $H_*^{\rm rel}(B\phi)$ up to isomophism. By functoriality, we arrive at a commuting diagram of complexes with exact rows and all vertical maps being quasi-isomorphisms:
\tiny
\begin{center}
$\begin{CD}
0 @>>>\mathcal{C}_*(\Gamma_2)\otimes CC_*(\C)@>>> \mathcal{C}_*(\phi)\otimes CC_*(\C) @>>> \mathcal{C}_{*-1}(\Gamma_1)\otimes CC_*(\C)@>>> 0\\
@. @VVV  @VVV  @VVV\\
0 @>>> \Omega_*^{<e>}(\C[\Gamma_2]) @>>>  \Omega_*^{<e>}(\phi)@>>>  \Omega_{*+1}^{<e>}(\C[\Gamma_1]) @>>>0 \\
\end{CD}$
\end{center}
\normalsize
It follows that there is an isomorphism $\beta_{\rm rel}:H_*^{{\rm rel},<e>}(\phi)\xrightarrow{\sim} H_*^{\rm rel}(B\phi)\otimes HC_{*}(\C)$ fitting into a commuting diagram with exact rows and vertical mappings being isomorphisms:
\tiny
\begin{center}
$\begin{CD}
@>>> H_*^{<e>}(\C[\Gamma_1]) @>>>H_*^{<e>}(\C[\Gamma_2])@>>> H_*^{{\rm rel}, <e>}(\phi) @>>> H_{*+1}^{<e>}(\C[\Gamma_1])@>>> \\
@. @VV\beta_{\Gamma_1}V @VV\beta_{\Gamma_2}V  @VV\beta_{\rm rel} V  @VV\beta_{\Gamma_1} V\\
@>>> H_*^{\rm sing}(B\Gamma_1)\otimes R_* @>>>H_*^{\rm sing}(B\Gamma_2)\otimes R_* @>>> H_*^{\rm rel}(B\phi)\otimes R_*@>>> H_{*-1}^{\rm sing}(B\Gamma_1)\otimes R_* @>>> \\
\end{CD}$
\end{center}
\normalsize
where $R_*:=HC_{*}(\C)$. The top row is the long exact sequence comes from the localized version of \eqref{sesforco} and the bottom row is can be found in Proposition \ref{sesforsinghom}. We can thus define a first version of relative assembly at the level of homology. Note that $H_*^{{\rm rel}, <e>}(\phi)\subseteq H_*^{{\rm rel}}(\phi)$ is a direct summand (the analogous statement need not be true for $\hat{H}_*^{{\rm rel}}(\phi_\mathcal{A})$).
\end{remark}

\begin{define}
\label{inclusionassembly}
Let $\phi:\Gamma_1\to \Gamma_2$ be a group homomorphism extending continuously to a $*$-homomorphism of Frechet $*$-algebra completions $\phi_\mathcal{A}:\mathcal{A}_1\to \mathcal{A}_2$ of $\C[\Gamma_1]$ and $\C[\Gamma_2]$, respectively. Define the map $\mu_{\rm dR,\phi}$ to make the following diagram commute:
$$\xymatrix{
 H_*^{\rm rel}(B\phi)=H_*^{\rm rel}(B\phi)\otimes HC_{0}(\C)\ar[r]^{\mbox{                }\qquad\qquad\quad\mu_{\rm dR,\phi}}\ar[d]
 & \hat{H}_*^{{\rm rel}}(\phi_\mathcal{A})  
 \\ 
H_*^{\rm rel}(B\phi)\otimes HC_{*}(\C)\ar[r]_{\beta_{\mathrm{rel}}^{-1}}
& H_*^{{\rm rel}, <e>}(\phi)\ar[u]   
}$$
The mapping $H_*^{{\rm rel}, <e>}(\phi)\to \hat{H}_*^{{\rm rel}}(\phi_\mathcal{A})$ is defined from the inclusion of complexes $\Omega_*^{\rm <e>}(\phi)\hookrightarrow \hat{\Omega}_*^{\rm ab}(\phi_\mathcal{A})$.
\end{define}

\subsection{Chern characters on $K_*^{\rm geo}(pt; \phi_\mathcal{A})$}
\label{subsectiononcherelkthe}

The Chern character on relative $K$-theory is defined similarly as in Section \ref{subseconder} using Chern-Weil theory. See also \cite[Section 4]{DGIII} and \cite[Section 1.2]{WahlAPSForCstarBun}.

\begin{define}
A collection $(W, (\mathcal{E}_{\mathcal{A}_2}, \mathcal{F}_{\mathcal{A}_1}, \alpha),(\tilde{\nabla}_\mathcal{E},\tilde{\nabla}_\mathcal{F}))$ is called a cycle with total connections for $K_*^{\rm geo}(pt; \phi_\mathcal{A})$ whenever $(W, (\mathcal{E}_{\mathcal{A}_2}, \mathcal{F}_{\mathcal{A}_1}, \alpha))$ is a cycle for $K_*^{\rm geo}(pt; \phi_\mathcal{A})$, $\tilde{\nabla}_\mathcal{E}$ is a total connection for $\mathcal{E}_{\mathcal{A}_2}$ and $\tilde{\nabla}_\mathcal{F}$ is a total connection for $\mathcal{F}_{\mathcal{A}_1}$ satisfying that near $\partial W$, 
\begin{equation}
\label{boundarycompatibility}
\tilde{\nabla}_\mathcal{E}=\alpha^*(\tilde{\nabla}_\mathcal{F}\otimes_{\phi_\mathcal{A}}1_{\mathcal{A}_2}).
\end{equation}
In this case, we say that $(\tilde{\nabla}_\mathcal{E},\tilde{\nabla}_\mathcal{F})$ is a total connection for $(W, (\mathcal{E}_{\mathcal{A}_2}, \mathcal{F}_{\mathcal{A}_1}, \alpha))$.
\end{define}

\begin{remark}
The notion of connection in the context of geometric cycles for surgery was defined in \cite[Definition 4.10]{DGIII}. Connections in that context are more complicated due to the existence of ``non-easy cycles" (Such cycles are needed in the surgery case because the Mishchenko bundle on the boundary need not extend, see \cite[Introduction]{DGII} for more on this). The distinction between connections and total connections was not emphasized in \cite{DGIII}.
\end{remark}

\begin{ex}
Any cycle $(W, (\mathcal{E}_{\mathcal{A}_2}, \mathcal{F}_{\mathcal{A}_1}, \alpha))$ for $K_*^{\rm geo}(pt; \phi_\mathcal{A})$ admits a connection. Both the bundles $\mathcal{E}_{\mathcal{A}_2}$ and $\mathcal{F}_{\mathcal{A}_1}$ admit total Gra\ss mannian connections $\tilde{\nabla}_\mathcal{E}^{\rm gr}$ and $\tilde{\nabla}_\mathcal{F}^{\rm gr}$ that lift the Gra\ss mannian connections $\nabla_\mathcal{E}^{\rm gr}$ and $\nabla_\mathcal{F}^{\rm gr}$, respectively. Take a cutoff function $\chi$ supported in a collar neighborhood of $\partial W$ with $\chi=1$ near $\partial W$. The pair $\left((1-\chi)\tilde{\nabla}_\mathcal{E}^{\rm gr}+\chi\alpha^*(\tilde{\nabla}_\mathcal{F}^{\rm gr}\otimes_{\phi_\mathcal{A}} 1_{\mathcal{A}_2}),\tilde{\nabla}_\mathcal{F}^{\rm gr}\right)$ will form a total connection for the cycle $(W, (\mathcal{E}_{\mathcal{A}_2}, \mathcal{F}_{\mathcal{A}_1}, \alpha))$.
\end{ex}

\begin{ex}
\label{coveringexamplerelative}
For assembled cycles from $K_*^{\rm geo}(B\phi)$, for a group homomorphism $\phi:\Gamma_1\to \Gamma_2$, there is particular choice of connections following Example \ref{coveringexample}. Consider Fr\'echet $*$-algebra completions $\mathcal{A}_1$ and $\mathcal{A}_2$ of group algebras $\C[\Gamma_1]$ and $\C[\Gamma_2]$, respectively, such that $\phi$ extends to a continuous $*$-homomorphism $\mathcal{A}_1\to \mathcal{A}_2$. From $f$ and $g$, we define the $\Gamma_2$-Galois covering $\tilde{W}:=E\Gamma_2\times_fW\to W$ and the $\Gamma_1$-Galois covering $\widetilde{\partial W}:=E\Gamma_1\times_g \partial W\to \partial W$. We can pick functions $h\in C^\infty_c(\tilde{W},[0,1])$ and $h_\partial \in C^\infty_c(\widetilde{\partial W},[0,1])$ as above arriving at total connections $\tilde{\nabla}_{\mathcal{L},h}$ and $\tilde{\nabla}_{\mathcal{L},h_\partial}$ for $f^*\mathcal{L}_{\mathcal{A}_2}\to W$ and $g^*\mathcal{L}_{\mathcal{A}_1}\to \partial W$, respectively, as in Example \ref{coveringexample}. 

If $\nabla_E$ is a hermitean connection on $E$, the total connections $\nabla_E\otimes \tilde{\nabla}_{\mathcal{L},h}$ and $\nabla_{E|_{\partial W}}\otimes\tilde{\nabla}_{\mathcal{L},h_\partial} $ lift the connections $\nabla_E$ on $E\otimes f^*\mathcal{L}_{\mathcal{A}_2}\to W$ and $\nabla_{E|_{\partial W}}$ on $E|_{\partial W}\otimes g^*\mathcal{L}_{\mathcal{A}_1}\to \partial W$, respectively. Take a cutoff function $\chi$ supported in a collar neighborhood of $\partial W$ with $\chi=1$ near $\partial W$. The pair 
$$\left((1-\chi)\nabla_E\otimes \tilde{\nabla}_{\mathcal{L},h}+\chi\alpha^*(\nabla_{E|_{\partial W}}\otimes\tilde{\nabla}_{\mathcal{L},h_\partial} \otimes_{\phi_\mathcal{A}} 1_{\mathcal{A}_2}),\nabla_{E|_{\partial W}}\otimes\tilde{\nabla}_{\mathcal{L},h_\partial} \right)$$ 
will form a total connection for the cycle $\mu_\phi(W,E,(f,g))$. Here $\alpha$ denotes the isomorphism from Lemma \ref{isomopfmish}.
\end{ex}

Returning to the general case, if $(\tilde{\nabla}_\mathcal{E},\tilde{\nabla}_\mathcal{F})$ is a total connection for $(W, (\mathcal{E}_{\mathcal{A}_2}, \mathcal{F}_{\mathcal{A}_1}, \alpha))$ we can extend the total connections as in Equation \eqref{extendingtotal}. It follows from Proposition \ref{stokesanddzero} that 
\begin{equation}
\label{stokesandd}
\mathrm{d}_{\mathcal{A}_2}\mathrm{Ch}_{\mathcal{A}_2}(\tilde{\nabla}_\mathcal{E})=(\phi_\mathcal{A})_*\mathrm{Ch}_{\mathcal{A}_1}(\tilde{\nabla}_\mathcal{F}).
\end{equation}
It follows from Equation \eqref{stokesandd} that the chain 
$$\begin{pmatrix}\mathrm{Ch}_{\mathcal{A}_2}(\tilde{\nabla}_\mathcal{E})\\\mathrm{Ch}_{\mathcal{A}_1}(\tilde{\nabla}_\mathcal{F})\end{pmatrix}\in \hat{\Omega}_*^{\rm ab}(\phi_\mathcal{A}),$$ 
is a cycle. 

\begin{prop}
\label{cyclestoclasses}
The Chern character 
\begin{align*}
(W, (\mathcal{E}_{\mathcal{A}_2}, &\mathcal{F}_{\mathcal{A}_1}, \alpha))\\
&\mapsto \ch_{\rm rel}^{\phi_\mathcal{A}}(W, (\mathcal{E}_{\mathcal{A}_2}, \mathcal{F}_{\mathcal{A}_1}, \alpha)):=\left[\begin{pmatrix}\mathrm{Ch}_{\mathcal{A}_2}(\tilde{\nabla}_\mathcal{E})\\\mathrm{Ch}_{\mathcal{A}_1}(\tilde{\nabla}_\mathcal{F})\end{pmatrix}\right]\in \hat{H}_*^{\rm dR}(\phi_\mathcal{A}),
\end{align*}
is well defined from cycles to classes. That is, the homology class is independent of the choice of total connection.
\end{prop}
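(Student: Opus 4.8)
The plan is to mimic the argument for the absolute Chern character (Theorem \ref{theabschen}) and its relative $K$-homology counterpart (Theorem \ref{cheronhom}), working entirely at the level of the mapping cone complex $\hat{\Omega}_*^{\rm ab}(\phi_\mathcal{A})$ and reducing the invariance statement to two transgression computations: one in the $\mathcal{A}_2$-direction and one in the $\mathcal{A}_1$-direction, compatibly glued near the boundary. First I would check that the construction does not depend on the choice of total connection $(\tilde\nabla_\mathcal{E},\tilde\nabla_\mathcal{F})$. So fix two such pairs $(\tilde\nabla_\mathcal{E}^0,\tilde\nabla_\mathcal{F}^0)$ and $(\tilde\nabla_\mathcal{E}^1,\tilde\nabla_\mathcal{F}^1)$, each satisfying the boundary compatibility \eqref{boundarycompatibility}. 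The standard Chern--Weil/transgression argument on $W\times[0,1]$ (using the affine path of connections $t\tilde\nabla_\mathcal{F}^1+(1-t)\tilde\nabla_\mathcal{F}^0$, which is again a total connection in the sense of Section \ref{subseconder}) produces a transgression chain $T_\mathcal{F}\in\hat\Omega_*^{\rm ab}(\tilde{\mathcal{A}}_1)$ with $\mathrm{d}_{\mathcal{A}_1}T_\mathcal{F}=\mathrm{Ch}_{\mathcal{A}_1}(\tilde\nabla_\mathcal{F}^1)-\mathrm{Ch}_{\mathcal{A}_1}(\tilde\nabla_\mathcal{F}^0)$, and similarly a chain $T_\mathcal{E}$ with $\mathrm{d}_{\mathcal{A}_2}T_\mathcal{E}=\mathrm{Ch}_{\mathcal{A}_2}(\tilde\nabla_\mathcal{E}^1)-\mathrm{Ch}_{\mathcal{A}_2}(\tilde\nabla_\mathcal{E}^0)$.

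The key point is that these two transgression chains are linked by the relative differential. Because both boundary restrictions $\tilde\nabla_\mathcal{E}^i|_{\partial W}$ are pulled back via $\alpha$ from $\tilde\nabla_\mathcal{F}^i\otimes_{\phi_\mathcal{A}}1$, the transgression in the $\mathcal{A}_2$-direction restricted to $\partial W$ equals $(\phi_\mathcal{A})_*$ applied to $T_\mathcal{F}$, up to an exact term — more precisely, applying Proposition \ref{stokesanddzero} (the relative Stokes identity, i.e. \eqref{stokesandd}) to the connections on $W\times[0,1]$ gives $\mathrm{d}_{\mathcal{A}_2}T_\mathcal{E}=(\phi_\mathcal{A})_*\mathrm{Ch}_{\mathcal{A}_1}(\cdot)$ boundary terms that are absorbed by choosing $T_\mathcal{E}$ to extend $(\phi_\mathcal{A})_*T_\mathcal{F}$ near the boundary (using a collar and a cutoff, exactly as in the construction of a total connection for a cycle in the example after the definition of cycle with total connections). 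With that choice, the pair $(T_\mathcal{E}, T_\mathcal{F})^T\in\hat\Omega_{*}^{\rm ab}(\phi_\mathcal{A})$ satisfies, under the mapping cone differential $\begin{pmatrix}\mathrm{d}_{\mathcal{A}_2}&-(\phi_\mathcal{A})_*\\0&\mathrm{d}_{\mathcal{A}_1}\end{pmatrix}$,
$$\begin{pmatrix}\mathrm{d}_{\mathcal{A}_2}&-(\phi_\mathcal{A})_*\\0&\mathrm{d}_{\mathcal{A}_1}\end{pmatrix}\begin{pmatrix}T_\mathcal{E}\\T_\mathcal{F}\end{pmatrix}=\begin{pmatrix}\mathrm{Ch}_{\mathcal{A}_2}(\tilde\nabla_\mathcal{E}^1)-\mathrm{Ch}_{\mathcal{A}_2}(\tilde\nabla_\mathcal{E}^0)\\\mathrm{Ch}_{\mathcal{A}_1}(\tilde\nabla_\mathcal{F}^1)-\mathrm{Ch}_{\mathcal{A}_1}(\tilde\nabla_\mathcal{F}^0)\end{pmatrix},$$
which exhibits the difference of the two relative Chern cycles as a boundary in $\hat\Omega_*^{\rm ab}(\phi_\mathcal{A})$, hence shows the class in $\hat H_*^{\rm dR}(\phi_\mathcal{A})$ is independent of the connection.

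The main obstacle is the bookkeeping near $\partial W$: one must arrange that the $\mathcal{A}_2$-direction transgression $T_\mathcal{E}$ can actually be taken to agree with $(\phi_\mathcal{A})_*T_\mathcal{F}$ in a collar neighborhood, rather than merely restricting to it up to an exact correction. The clean way is to choose, on $W\times[0,1]$, the family of total connections on $\mathcal{E}_{\mathcal{A}_2}$ to be of the form $\chi\cdot\alpha^*((\text{path on }\mathcal{F})\otimes_{\phi_\mathcal{A}}1)+(1-\chi)\cdot(\text{any path on }\mathcal{E}_{\mathcal{A}_2})$ with $\chi$ a collar cutoff equal to $1$ near $\partial W$, so that boundary compatibility \eqref{boundarycompatibility} holds for every $t$; then the transgression integral over the $[0,1]$-factor automatically produces $T_\mathcal{E}$ with the required collar behavior, and the relative Stokes identity \eqref{stokesandd} applies verbatim. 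Everything else — that $\mathrm{Ch}_{\mathcal{A}_i}$ of a total connection is $\mathrm{d}_W+\mathrm{d}_{\mathcal{A}_i}$-closed, that integration over $W$ against $\mathrm{Td}(W)$ commutes with the transgression, and that the resulting chains lie in $\hat\Omega_*^{\rm ab}(\tilde{\mathcal{A}}_i)$ — is routine Chern--Weil theory in the Fr\'echet setting, already used in the proof of Theorem \ref{theabschen} and in \cite[Section 4]{DGIII}, so I would only indicate it. (Note that the proposition as stated claims only well-definedness from cycles to classes, i.e. connection-independence; the compatibility of $\ch_{\rm rel}^{\phi_\mathcal{A}}$ with the Baum--Douglas relations, and hence that it descends to $K_*^{\rm geo}(pt;\phi_\mathcal{A})$, is the content of the subsequent theorem and would be handled separately, again following \cite[Lemma 4.19, Proposition 4.21]{DGIII}.)
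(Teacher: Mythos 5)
This is correct and is precisely the ``ordinary relative Chern--Weil'' transgression argument that the paper's one-line proof defers to (namely \cite[Lemma 4.15]{DGIII}): the affine path of total connections preserves the boundary compatibility \eqref{boundarycompatibility} for every $t$, so the pair of transgression chains $(T_\mathcal{E},T_\mathcal{F})^T$ lies in $\hat{\Omega}_*^{\rm ab}(\phi_\mathcal{A})$ and its mapping-cone differential is the difference of the two relative Chern cycles. One small correction: your first paragraph asserts $\mathrm{d}_{\mathcal{A}_2}T_\mathcal{E}=\mathrm{Ch}_{\mathcal{A}_2}(\tilde{\nabla}_\mathcal{E}^1)-\mathrm{Ch}_{\mathcal{A}_2}(\tilde{\nabla}_\mathcal{E}^0)$, which cannot hold on the manifold with boundary $W$ --- Stokes' theorem contributes the boundary term $\pm(\phi_\mathcal{A})_*T_\mathcal{F}$ (with the sign fixed by the conventions behind \eqref{stokesandd}), and it is exactly this term that the off-diagonal entry $-(\phi_\mathcal{A})_*$ of the mapping-cone differential absorbs, so your displayed matrix identity is the correct statement and the earlier formula, being inconsistent with it, should be dropped.
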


The proposition follows from the ordinary considerations of relative Chern-Weil theory, for the detailed argument see the proof of \cite[Lemma 4.15]{DGIII}. 

\begin{remark}
The compatibility condition on the total connections near the boundary (see Equation \eqref{boundarycompatibility}) can be removed at the cost of transgression terms appearing in Equation \eqref{stokesandd} and the definition of the Chern character in Proposition \ref{cyclestoclasses} (cf. \cite[Definition 4.14]{DGIII}). We remark that the analogue of the compatibility condition in Equation \eqref{boundarycompatibility} does not help when defining the delocalized Chern character in \cite[Section 4]{DGIII}; in the present case, it simplifies the computations substantially.
\end{remark}

\begin{remark}
\label{comattcuelc}
The relative Chern character is related to the absolute Chern characters via the maps $r$ and $\delta$ from Theorem \ref{lesforphi} (see page \pageref{lesforphi}) and Proposition \ref{yeatanotherles} (see page \pageref{yeatanotherles}). Indeed, if $(M,\mathcal{E}_{\mathcal{A}_2})$ is a cycle for $K_*^{\rm geo}(pt;\mathcal{A}_2)$, then 
$$\ch_{\rm rel}^{\phi_\mathcal{A}}\circ r(M,\mathcal{E}_{\mathcal{A}_2})=\left[\begin{pmatrix}\mathrm{Ch}_{\mathcal{A}_2}(\tilde{\nabla}_\mathcal{E})\\0\end{pmatrix}\right]=r\circ \ch_{\mathcal{A}_2}(M,\mathcal{E}_{\mathcal{A}_2}),$$
because $M$ has empty boundary. Moreover, if $(W, (\mathcal{E}_{\mathcal{A}_2}, \mathcal{F}_{\mathcal{A}_1}, \alpha))$ is a cycle for $K_*^{\rm geo}(pt;\phi_\mathcal{A})$, then 
\begin{align*}
\delta\circ \ch_{\rm rel}^{\phi_\mathcal{A}}(W, (\mathcal{E}_{\mathcal{A}_2}, \mathcal{F}_{\mathcal{A}_1}, \alpha))&=\delta\left[\begin{pmatrix}\mathrm{Ch}_{\mathcal{A}_2}(\tilde{\nabla}_\mathcal{E})\\\mathrm{Ch}_{\mathcal{A}_1}(\tilde{\nabla}_\mathcal{F})\end{pmatrix}\right]=\\
&=[\mathrm{Ch}_{\mathcal{A}_1}(\tilde{\nabla}_\mathcal{F})]=\ch_{\mathcal{A}_1}\circ \delta(W, (\mathcal{E}_{\mathcal{A}_2}, \mathcal{F}_{\mathcal{A}_1}, \alpha)).
\end{align*}
\end{remark}

\begin{theorem}
\label{cheronkthe}
The Chern character defined in Proposition \ref{cyclestoclasses} induces a well defined mapping $\ch_{\rm rel}:K_*^{\rm geo}(pt;\phi_\mathcal{A})\to \hat{H}_*^{\rm rel}(\phi_\mathcal{A})$ that fits into a commuting diagram with exact rows:
\scriptsize
\begin{equation}
\label{righarla}
\begin{CD}
@>>> K_*^{\rm geo}(pt; \mathcal{A}_1) @>(\phi_\mathcal{A})_*>> K_*^{\rm geo}(pt;\mathcal{A}_2) @>r>> K_*^{\rm geo}(pt;\phi_\mathcal{A}) @>\delta>> K_{*-1}^{\rm geo}(pt;\mathcal{A}_1) @>>> \\
@. @VV\ch_{\mathcal{A}_1}V @VV\ch_{\mathcal{A}_2}V  @VV\ch_{\rm rel}^{\phi_\mathcal{A}} V  @VV\ch_{\mathcal{A}_1} V\\
@>>> \hat{H}_*^{\rm dR}(\mathcal{A}_1) @>(\phi_\mathcal{A})_*>>\hat{H}_*^{\rm dR}(\mathcal{A}_1) @>>> \hat{H}_*^{\rm rel}(\phi_\mathcal{A}) @>>> \hat{H}_{*+1}^{\rm dR}(\mathcal{A}_1)@>>> \\
\end{CD}
\end{equation}
\normalsize
The top row is the long exact sequence appearing in the diagram \eqref{yeatanothercommuting} and the bottom row is from Proposition \ref{yeatanotherles}. The absolute Chern characters $\ch_{\mathcal{A}_i}:K_*^{\rm geo}(pt; \mathcal{A}_i)\to \hat{H}_*^{\rm dR}(\mathcal{A}_i)$ are defined as above in Theorem \ref{theabschen}.
\end{theorem}

The reader should note that the different gradings in the right most part of the diagram \eqref{righarla} makes no difference since we are working with $\Z/2$-graded homology theories. The discrepancy can informally be explained as coming from the fact that in the top row the dimension of cycles is counted positively, while the integration appearing in the Chern character subtracts the dimension from the total degree of the Chern forms in $C^\infty(W,\wedge^* T^*W\otimes \hat{\Omega}^{\rm ab}(\mathcal{A}))$ and dimensions are therefore counted negatively in the bottom row. 

The proof of Theorem \ref{cheronkthe} is similar to the analogous result in \cite{DGIII}. An outline is as follows. The diagram commutes at the level of cycles by Remark \ref{comattcuelc}. In particular, the theorem follows from the five lemma as long as the maps are well defined. The absolute Chern characters are well defined by Theorem \ref{theabschen} (see page \pageref{theabschen}). The theorem follows once the Chern character from Proposition \ref{cyclestoclasses} is well defined from classes to classes. That is, the Chern character respects the disjoint union/direct sum relation, vector bundle modification and the bordism relation. The disjoint union/direct sum relation is trivially satisfied. The proof that the Chern character respects bordism follows as in \cite[Lemma 4.19]{DGIII} and the proof that it respects vector bundle modification follows as in \cite[Proposition 4.21]{DGIII}.

\begin{ex}
\label{mishcurv}
We return to the connections constructed in Example \ref{coveringexamplerelative}. We fix a cycle $(W,W\times \C, (f,g))$ with trivial bundle data, a Riemannian structure on $W$ and functions $h$ and $h_\partial$ as in Example \ref{coveringexamplerelative}. One computes $\tilde{\nabla}_{\mathcal{L},h}^2$ and $\tilde{\nabla}_{\mathcal{L},h_\partial}^2$ as in Example \ref{computingcurv}. To shorten notation, set $\bar{\nabla}_{W,f,g}:=(1-\chi)\tilde{\nabla}_{\mathcal{L},h}+\chi\alpha^*(\tilde{\nabla}_{\mathcal{L},h_\partial} \otimes_{\phi_\mathcal{A}} 1_{\mathcal{A}_2})$ as a total connection on $f^*\mathcal{L}_{\mathcal{A}_2}$. We define 
\begin{align*}
\omega_{(W,(f,g))}&:= \mathrm{e}^{-\bar{\nabla}_{W,f,g}^2}\wedge \mathrm{Td}(W)\in C^\infty(W,\wedge ^*T^*W\otimes\Omega_*^{\rm ab}(\mathcal{A}_2)),\quad\mbox{and}\\
\omega_{(\partial W,g)}&:=\mathrm{e}^{-\tilde{\nabla}_{\mathcal{L},h_\partial}^2}\wedge \mathrm{Td}(\partial W)\in C^\infty(\partial W,\wedge ^*T^*\partial W\otimes\Omega_*^{\rm ab}(\mathcal{A}_1)).
\end{align*}
It follows from the construction that $(\int_W\omega_{(W,(f,g))},\int_{\partial W}\omega_{(\partial W,g)})^T\in \hat{\Omega}_*^{\rm rel}(\phi_{\mathcal{A}})$ represents $\ch_{\rm rel}^{\phi_\mathcal{A}}(W,W\times \C,(f,g))$. Note that $(\int_W\nu\wedge\omega_{(W,(f,g))},\int_{\partial W}\nu\wedge\omega_{(\partial W,g)} )^T\in \Omega_*^{{\rm rel},<e>}(\phi)$ (for notation, see Remark \ref{remarkonrelloc}) for any differential form $\nu$ on $W$. 

\end{ex}

\section{Assembly in homology and index pairings}
\label{assinhom}

The free assembly map has an analogue in homology. We consider two different maps: one that uses an inclusion of complexes (see Definition \ref{inclusionassembly}) and a second version that is better adapted to the free assembly map defined on $K$-theory.

\subsection{The assembly map $\mu_{\rm dR,\phi}$ and injectivity}
First, we consider the assembly mapping $\mu_{\rm dR,\phi}:H_*^{\rm rel}(B\phi)\to \hat{H}_*^{\rm rel}(\phi_{\mathcal{A}})$ defined from the inclusion $\Omega_*^{\rm <e>}(\phi)\hookrightarrow \hat{\Omega}_*^{\rm ab}(\phi_\mathcal{A})$ and the isomorphism $\beta_{\rm rel}$. It is unclear to the authors if this version of the assembly mapping is compatible with the free assembly mapping and the Chern characters on a relative level. However, under minor assumptions, they are indeed compatible in the absolute case. Recall the definition of an admissible completion from Definition \ref{admissibleextdef}, see page \pageref{admissibleextdef}. 

For a Fr\' echet algebra $\mathcal{A}$, we let $\hat{H}C^*(\mathcal{A})$ denote its topological cyclic cohomology, i.e. the topological cohomology of the complex of continuous cyclic cocycles. Note that if the algebra $\mathcal{A}$ is admissible, the composition 
$$\hat{H}C^*(\mathcal{A})\to HC^*(\C[\Gamma])\xrightarrow{(\beta^{-1}_\Gamma)^*} H^*(B\Gamma)\otimes HC^*(\C)\to H^*(B\Gamma)\otimes HC^0(\C)=H^*(B\Gamma),$$ 
is surjective. There is a pairing $\hat{H}C^*(\mathcal{A})\times \hat{H}^{\rm dR}_*(\mathcal{A})\to \C$  obtained from noting that Proposition \ref{hdrandhcyc} (see page \pageref{hdrandhcyc}) is continuous and defines an inclusion of $\hat{H}^{\rm dR}_*(\mathcal{A})$ into continuous cyclic homology $\hat{H}C_*(\mathcal{A})$. We say that $\hat{H}C^*(\mathcal{A})$ separates $\hat{H}^{\rm dR}_*(\mathcal{A})$ if the pairing is non-degenerate.

\begin{prop}
\label{propcompass}
Let $\Gamma$ be a discrete finitely generated group, $B\Gamma$ a locally finite $CW$-complex and $\mathcal{A}\supseteq \C[\Gamma]$ a Fr\'echet algebra completion. 
\begin{enumerate}
\item If $\hat{H}C^*(\mathcal{A})$ separates $\hat{H}^{\rm dR}_*(\mathcal{A})$, the following diagram commutes
\begin{center}
$\begin{CD}
K^{\textnormal{geo}}_*(B \Gamma) @>\mu_{\textnormal{geo}} >> K^{\textnormal{geo}}_*(pt;\mathcal{A})  \\
@V\mathrm{ch}_*VV @VV\mathrm{ch}^{\mathcal{A}}_*V   \\
H_*(B\Gamma)@>\mu_{\rm dR} >> \hat{H}^{\rm dR}_*(\mathcal{A}) \\
\end{CD}$
\end{center}
\item If moreover $\mathcal{A}$ is admissible, $\mu_{\rm geo}$ and $\mu_{\rm dR}$ are rationally injective. In particular, $\Gamma$ satisfies the $\epsilon$-strong Novikov conjecture if it admits a Fr\'echet algebra completion admissible for a $C^*$-completion $C^*_\epsilon(\Gamma)$.
\end{enumerate}
\end{prop}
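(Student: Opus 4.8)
The plan is to deduce (1) from a higher index computation and then obtain (2) by formal arguments together with the admissibility hypothesis. For (1), since $\hat{H}C^*(\mathcal{A})$ separates $\hat{H}^{\rm dR}_*(\mathcal{A})$, it suffices to verify, for each continuous cyclic cocycle $\tau$ on $\mathcal{A}$ and each geometric cycle $(M,E,f)$ for $K^{\rm geo}_*(B\Gamma)$, the numerical identity $\langle\tau,\ch^{\mathcal{A}}_*(\mu_{\rm geo}(M,E,f))\rangle=\langle\tau,\mu_{\rm dR}(\ch_*(M,E,f))\rangle$. I would compute the left side from the total connection $\nabla_E\otimes\tilde\nabla_{\mathcal{L},h}$ of Example \ref{coveringexample} on $E\otimes f^*\mathcal{L}_\mathcal{A}$, whose curvature is the explicit expression of Example \ref{computingcurv}; since every group-algebra monomial occurring in that curvature multiplies to the unit, the resulting Chern form $\ch^{\mathcal{A}}(M,E\otimes f^*\mathcal{L}_\mathcal{A})$ represents a class in the image of $\hat{H}^{<e>}_*(\mathcal{A})\to\hat{H}^{\rm dR}_*(\mathcal{A})$, namely Lott's higher index form of the $E$-twisted $\mathrm{spin}^c$ Dirac operator on the $\Gamma$-cover of $M$ determined by $f$. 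Pairing $\tau$ with a class of this localized form only sees the group-cohomology class underlying $\tau$ (through the Burghelea description of Proposition \ref{hstarhstar}), so the identity reduces to the case $\tau=\hat c$ for a group cocycle $c$.

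In that case the higher index theorem of \cite{Lott} (equivalently the computation of \cite{cmnovikov}) gives $\langle\hat c,\ch^{\mathcal{A}}(M,E\otimes f^*\mathcal{L}_\mathcal{A})\rangle=\langle f^*[c]\cup\ch(E)\cup\mathrm{Td}(M),[M]\rangle$, and this equals $\langle[c],\ch_*(M,E,f)\rangle$ because $\ch_*(M,E,f)=f_*\big((\ch(\nabla_E)\wedge\mathrm{Td}(M))\cap[M]\big)$ by definition. On the other hand, the way $\mu_{\rm dR}$ is built from $\beta_\Gamma^{-1}$ and the cochain-level formula for $\hat c$ recalled before Definition \ref{admissibleextdef} shows $\langle\hat c,\mu_{\rm dR}(\ch_*(M,E,f))\rangle=\langle[c],\ch_*(M,E,f)\rangle$ as well. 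This settles (1); it is the absolute counterpart of the corresponding statement in \cite{DGIII}, and I would take care only to reconcile the normalization constants entering $\ch^{\mathcal{A}}$ and $\hat c$.

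For (2) I would first show $\mu_{\rm dR}$ is a complex---hence rational---injection. When $B\Gamma$ is a locally finite CW-complex the pairing $H^k(B\Gamma;\C)\times H_k(B\Gamma;\C)\to\C$ is perfect (there is no $\mathrm{Ext}$ term, $\C$ being an injective $\Z$-module), so any $0\neq\xi\in H_*(B\Gamma;\C)$ is detected by some $\eta\in H^*(B\Gamma;\C)$; admissibility lets me represent $\eta$ by a cocycle $c\in\mathcal{C}^*_\mathcal{A}(\Gamma)$, and then $[\hat c]\in\hat{H}C^*(\mathcal{A})$ satisfies $\langle[\hat c],\mu_{\rm dR}(\xi)\rangle=\langle\eta,\xi\rangle\neq 0$, forcing $\mu_{\rm dR}(\xi)\neq0$. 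Since $\ch_*\colon K^{\rm geo}_*(B\Gamma)\to H_*(B\Gamma)$ is a complex isomorphism for locally finite CW-complexes (Theorem \ref{cheronhom} with empty subspace; cf.\ \cite[Section 11]{BD}), the composite $\mu_{\rm dR}\circ\ch_*$ is a rational injection; by (1) it equals $\ch^{\mathcal{A}}_*\circ\mu_{\rm geo}$, whence $\mu_{\rm geo}$ is a rational injection. For the last clause, assume $\mathcal{A}$ is admissible for $C^*_\epsilon(\Gamma)$; then $\iota\colon\mathcal{A}\hookrightarrow C^*_\epsilon(\Gamma)$ is a $K$-theory isomorphism and $K^{\rm geo}_*(pt;C^*_\epsilon(\Gamma))\cong K_*(C^*_\epsilon(\Gamma))$ by Theorem \ref{closedandkhom}, so naturality of free assembly gives $\mu^{C^*_\epsilon}_{\rm geo}=\iota^{\rm geo}_*\circ\mu^{\mathcal{A}}_{\rm geo}$. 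I would pull $\iota^{\rm geo}_*\mu^{\mathcal{A}}_{\rm geo}(M,E,f)$ back to $K_*(\mathcal{A})$ along $\iota_*^{-1}$ and pair it with a cyclic cocycle $\tau$; by Theorem \ref{theabschen}, Equation \eqref{indexthmforch}, and the computation in (1), this number equals $\langle\tau,\mu_{\rm dR}(\ch_*(M,E,f))\rangle$. Taking $\tau=[\hat c]$ detecting a non-torsion $\ch_*(M,E,f)$ makes it nonzero, so $\mu^{C^*_\epsilon}_{\rm geo}(M,E,f)$ is non-torsion; that is, $\Gamma$ has the $\epsilon$-strong Novikov property.

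The hard part is the input to (1): identifying the Chern--Weil Chern character $\ch^{\mathcal{A}}$ of the assembled cycle, built from the Lott-type connection on the Mishchenko bundle, with the topological index $\langle f^*[c]\cup\ch(E)\cup\mathrm{Td}(M),[M]\rangle$, and tracing through its $C^*$-algebraic shadow in Equation \eqref{indexthmforch}. Once that computation is in hand, the separation reduction, the perfectness of the cohomology pairing, the use of admissibility, and the fact that $\ch_*$ is a complex isomorphism are routine.
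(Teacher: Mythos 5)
Your proposal is correct and follows essentially the same route as the paper: the separation hypothesis reduces (1) to pairing both sides with continuous cyclic cocycles, which Connes--Moscovici's higher index theorem (together with Theorem \ref{theabschen}) identifies with the characteristic number $\int_M\ch(E)\wedge\mathrm{Td}(M)\wedge f^*(\beta_\Gamma^{-1})^*(c)$, and (2) is obtained exactly as in the paper from this pairing identity, admissibility, and the fact that $\ch_*$ is a complex isomorphism for locally finite CW-complexes. Your extra spelling-out of the final ``$\epsilon$-strong Novikov'' clause via naturality of assembly and Equation \eqref{indexthmforch} goes beyond the paper's terse ``in particular'', and carries the same implicit reliance on the hypotheses of Theorem \ref{theabschen} (holomorphic closedness, or a Banach-algebra index as in the $\ell^1$ applications) that the paper itself makes.
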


\begin{proof}
By our assumption on $\Gamma$, $\ch_{B\Gamma}:K^{\textnormal{geo}}_*(B \Gamma) \to H_*(B\Gamma)$ is a complex isomorphism. Therefore, to prove (1), it suffices to prove that $\langle c,\ch^\mathcal{A}_*(\mu_{\rm geo}(x))\rangle= \langle c,\mu_{\rm dR}(\ch_*(x))\rangle$ for all $x=(M,E,f)\in K^{\textnormal{geo}}_*(B \Gamma)$ and $c\in \hat{H}C^*(\mathcal{A})$. This fact follows from Connes-Moscovici's higher index theorem \cite[Proposition 6.3]{cmnovikov} which together with Theorem \ref{theabschen} gives us the required identity in the following argument $$\langle c,\ch^\mathcal{A}_*(\mu_{\rm geo}(x))\rangle=\int_M \ch(E)\wedge \mathrm{Td}(M)\wedge (\beta^{-1}_\Gamma)^*(c)=\langle c,\mu_{\rm dR}(\ch_*(x))\rangle.$$

To prove (2), we note that for any cocycle $c\in C^*_\mathcal{A}(\Gamma)$, $\langle \hat{c},\mu_{\rm dR}(\ch_*(x))\rangle=\langle c,\ch_*(x)\rangle$ where $\hat{c}$ is defined as in equation \eqref{thereisahat}. Since $\mathcal{A}$ is admissible, it follows that $\ch_*(x)=0$ if $\mu_{\rm dR}(\ch_*(x))=0$. Since the image of the Chern character spans $H_*(B\Gamma)$, $\mu_{\rm dR}$ is injective, and since $\ch_*$ is rationally injective for locally finite $CW$-complexes, so is $\mu_{\rm geo}$.
\end{proof}

\subsection{Another approach to assembly in homology}
The second approach to assembly uses the connection on the Mishchenko bundle. As above, we consider a group homomorphism $\phi:\Gamma_1\to \Gamma_2$. For a manifold with boundary $W$ equipped with mappings $f:W\to B\Gamma_2$ and $g:\partial W\to B\Gamma_1$ such that $f|_{\partial W}=B\phi\circ g$, we define a mapping 
\begin{align}
\nonumber
I_{f,g}:\Omega^*(W)&:=C^\infty (W,\wedge^*T^*W)\to \Omega_*^{{\rm rel}<e>}(\phi), \\
\label{ifgdef}
& I_{f,g}(\nu):= \begin{pmatrix}\int_W\nu \wedge\omega_{(W,(f,g))}\\
{}\\\int_{\partial W}\nu \wedge\omega_{(\partial W,g)}\end{pmatrix},
\end{align}
where $\omega_{(W,(f,g))}$ and $\omega_{(\partial W,g)}$ are as in Example \ref{mishcurv}. This is a well defined mapping by Example \ref{mishcurv}. The following proposition follows from Stokes' theorem and the fact that the total curvatures are $\mathrm{d}_W+\mathrm{d}_{\mathcal{A}}$-closed.

\begin{prop}
We have the identity that 
$$\int_W\mathrm{d} \nu \wedge\omega_{(W,(f,g))}=\phi_*\left(\int_{\partial W}\nu \wedge\omega_{(\partial W,g)}\right)+\mathrm{d}_\mathcal{A} \int_W \nu \wedge\omega_{(W,(f,g))}.$$
In particular, the mapping $I_{f,g}$ is a morphism of complexes.
\end{prop}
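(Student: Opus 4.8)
The statement is a relative Stokes' theorem for the total Chern--Weil form, so the plan is to reduce it to the absolute identity already recorded in Proposition \ref{stokesanddzero} (applied on $W$, not on $\partial W$) together with the ordinary Stokes theorem for the de Rham part. Concretely, write $\Theta_W := \omega_{(W,(f,g))} = \mathrm{e}^{-\bar\nabla_{W,f,g}^2}\wedge \mathrm{Td}(W)$, viewed as an element of $C^\infty(W,\wedge^*T^*W\otimes \Omega_*^{\rm ab}(\mathcal{A}_2))$, and similarly $\Theta_{\partial W} := \omega_{(\partial W,g)}$ on $\partial W$. The key input is that, because $\mathrm{e}^{-\bar\nabla_{W,f,g}^2}$ is $(\mathrm{d}_W+\mathrm{d}_{\mathcal{A}_2})$-closed (this is the mixed Bianchi identity, exactly the fact cited just before Proposition \ref{stokesanddzero}) and the Todd form $\mathrm{Td}(W)$ is $\mathrm{d}_W$-closed, the product $\Theta_W$ is $(\mathrm{d}_W+\mathrm{d}_{\mathcal{A}_2})$-closed as a total form. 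Hence for any $\nu\in\Omega^*(W)$ one has
\[
(\mathrm{d}_W+\mathrm{d}_{\mathcal{A}_2})(\nu\wedge\Theta_W)=\mathrm{d}_W\nu\wedge\Theta_W \pm \nu\wedge(\mathrm{d}_W+\mathrm{d}_{\mathcal{A}_2})\Theta_W = \mathrm{d}_W\nu\wedge\Theta_W,
\]
up to the usual sign bookkeeping in the $\Z$-graded differential algebra $\hat\Omega_*^{\rm ab}(\tilde{\mathcal{A}}_2)$.

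Next I would integrate this identity over $W$. Fiberwise integration $\int_W$ in the $\wedge^*T^*W$-variables intertwines $\mathrm{d}_{\mathcal{A}_2}$ (which acts only on the $\Omega_*^{\rm ab}(\mathcal{A}_2)$-factor) with itself, and by Stokes' theorem it sends $\int_W \mathrm{d}_W(\cdot)$ to the boundary integral $\int_{\partial W}(\cdot)|_{\partial W}$. Applying this to the displayed identity gives
\[
\int_W \mathrm{d}_W\nu\wedge\Theta_W = \int_{\partial W}(\nu\wedge\Theta_W)\big|_{\partial W} - \mathrm{d}_{\mathcal{A}_2}\int_W\nu\wedge\Theta_W .
\]
Now I use the boundary compatibility of the chosen total connections from Example \ref{coveringexamplerelative}: near $\partial W$ the total connection $\bar\nabla_{W,f,g}$ equals $\alpha^*(\tilde\nabla_{\mathcal{L},h_\partial}\otimes_{\phi_\mathcal{A}}1_{\mathcal{A}_2})$ (by construction, since $\chi\equiv 1$ there), and moreover the metric is of product type near $\partial W$, so $\mathrm{Td}(W)|_{\partial W}=\mathrm{Td}(\partial W)$ and there is no normal-derivative contribution. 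Therefore $\Theta_W|_{\partial W} = (\phi_\mathcal{A})_*\Theta_{\partial W}$ after applying $\alpha$, i.e. $\int_{\partial W}(\nu\wedge\Theta_W)|_{\partial W} = \phi_*\big(\int_{\partial W}\nu|_{\partial W}\wedge\Theta_{\partial W}\big)$. Substituting this into the previous line yields exactly the claimed identity (with $\nu|_{\partial W}$ abbreviated to $\nu$ as in the statement, and $\mathrm{d}_{\mathcal{A}_2}$ written $\mathrm{d}_\mathcal{A}$). The final assertion that $I_{f,g}$ is a morphism of complexes is then immediate: $\partial^{\rm rel}(I_{f,g}(\nu)) = I_{f,g}(\mathrm{d}_W\nu)$ unpacks precisely to the pair of identities given by the displayed formula in the first component and by the absolute Stokes formula (Proposition \ref{stokesanddzero}, or rather Remark \ref{boundaryrem}'s closedness for the closed manifold $\partial W$) in the second component.

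The main obstacle is purely bookkeeping: getting the signs right in the $\Z$-graded Leibniz rule \eqref{extendingtotal} when $\nu$ carries both a $\wedge^kT^*W$-degree and the $\Theta$-form carries a degree in $\hat\Omega_m^{\rm ab}$, and making sure that "$(\mathrm{d}_W+\mathrm{d}_{\mathcal{A}_2})$-closed'' is used with the correct total degree so that no extra sign appears on the $\mathrm{d}_\mathcal{A}$-term. There is also a small technical point in justifying that fiber integration $\int_W$ really commutes with $\mathrm{d}_{\mathcal{A}_2}$ and converts $\mathrm{d}_W$ to restriction-to-boundary in this $\mathcal{A}_2$-valued setting; this is routine since $\mathrm{d}_{\mathcal{A}_2}$ only touches the coefficient algebra and $\int_W$ only touches $\wedge^*T^*W$, so they act on disjoint tensor factors. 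Neither issue is conceptually serious; the content of the proposition is entirely Stokes' theorem plus the boundary normalization of the connections built in Example \ref{coveringexamplerelative}.
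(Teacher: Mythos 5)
Your argument is exactly the paper's: the paper proves this proposition by simply invoking Stokes' theorem together with the $(\mathrm{d}_W+\mathrm{d}_{\mathcal{A}})$-closedness of the total curvature forms, which is precisely the reduction you carry out in detail (including the boundary identification via $\alpha$ and the product-type structure from Example \ref{coveringexamplerelative}). The residual sign on the $\mathrm{d}_{\mathcal{A}}$-term that you flag is pure degree-convention bookkeeping and does not affect correctness.
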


\begin{remark}
\label{remarkonifg}
If $(W,E,(f,g))$ is a cycle for $K_*^{\rm geo}(B\phi)$, the argument in Example \ref{mishcurv} shows that $\ch_{\rm rel}^{\phi_\mathcal{A}}(\mu_{\rm geo}(W,E,(f,g)))$ is represented by $I_{f,g}(\ch(\nabla_E)\wedge \mathrm{Td}(W))$ for a hermitean connection $\nabla_E$ on $E$ and a Riemannian structure on $W$, both being of product type near $\partial W$.
\end{remark}

\begin{define}
\label{definitionofmishass}
Define $\mu_{\rm M}^{\phi_\mathcal{A}}:H_*(B\phi)\to \hat{H}_*^{\rm rel}(\phi_\mathcal{A})$ as follows. For $x\in H_*(B\phi)$ of the form $(f,g)_*(\nu\cap [W])$, where $[W]\in H_*(W,\partial W)$ is the fundamental class of a manifold with boundary, $\nu$ a closed differential form on $W$ and $(f,g):[i:\partial W\to W]\to [B\phi:B\Gamma_1\to B\Gamma_2]$, we define 
$$\mu_{\rm M}^{\phi_\mathcal{A}}(x):=[I_{f,g}(\nu)].$$
We write $\mu_{\rm M}^{\mathcal{A}}$ for the mapping in the absolute case. 
\end{define}

\begin{remark}
By the same proof as that of Proposition \ref{propcompass}.1, $\mu_{\rm M}=\mu_{\rm dR}$ in the absolute case if $\hat{H}C^*(\mathcal{A})$ separates $\hat{H}^{\rm dR}_*(\mathcal{A})$.
\end{remark}

\begin{theorem}
\label{thmonassembly}
The map $\mu_{\rm M}^{\phi_\mathcal{A}}$ is well defined and fits into a commuting diagram with exact rows:
\begin{center}
\scriptsize
$\begin{CD}
@>>> H_*(B\Gamma_1) @>B\phi_*>> H_*(B\Gamma_2) @>>> H_*^{\rm rel}(B\phi) @>>> H_{*-1}(B\Gamma_1) @>>> \\
@. @VV\mu_{\rm M}^{\mathcal{A}_1}V @VV\mu_{\rm M}^{\mathcal{A}_2}V  @VV\mu_{\rm M}^{\phi_\mathcal{A}} V  @VV\mu_{\rm M}^{\mathcal{A}_1} V\\
@>>> \hat{H}_*^{\rm dR}(\mathcal{A}_1) @>(\phi_\mathcal{A})_*>>\hat{H}_*^{\rm dR}(\mathcal{A}_1) @>>> \hat{H}_*^{\rm rel}(\phi_\mathcal{A}) @>>> \hat{H}_*^{\rm dR}(\mathcal{A}_1)@>>> \\
\end{CD}$
\end{center}
Moreover, the mapping $\mu_{\rm M}$ makes the following diagram commutative:
\begin{equation}
\label{diagraminthmonassembly}
\begin{CD}
K^{\textnormal{geo}}_*(B \phi) @>\mu_{\textnormal{geo}} >> K^{\textnormal{geo}}_*(pt;\phi_{\mathcal{A}})  \\
@V\mathrm{ch}^{B\phi}_*V
V @VV\mathrm{ch}^{\phi_{\mathcal{A}}}_*V   \\
H_*(B\phi)@>\mu_{\rm M}^{\phi_\mathcal{A}} >> \hat{H}^{\rm rel}_*(\phi_{\mathcal{A}}) \\
\end{CD}
\end{equation}
\end{theorem}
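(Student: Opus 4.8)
The plan is to identify $\mu_{\rm M}^{\phi_\mathcal{A}}$ with the composite $\tilde\mu:=\ch^{\phi_\mathcal{A}}_*\circ\mu_{\rm geo}^{\phi_\mathcal{A}}\circ(\ch^{B\phi}_*)^{-1}$, which makes sense because $\ch^{B\phi}_*\colon K_*^{\rm geo}(B\phi)\to H_*(B\phi)$ is a complex isomorphism under our standing hypotheses (Theorem~\ref{cheronhom}), while $\mu_{\rm geo}^{\phi_\mathcal{A}}$ and $\ch^{\phi_\mathcal{A}}_*$ are well defined with target the complex vector space $\hat H_*^{\rm rel}(\phi_\mathcal{A})$ (Theorems~\ref{relativeassthm} and~\ref{cheronkthe}), so that $\ch^{\phi_\mathcal{A}}_*\circ\mu_{\rm geo}^{\phi_\mathcal{A}}$ kills torsion and factors through $\ch^{B\phi}_*\otimes\C$. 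Granting this identification, everything in the theorem follows at once: well-definedness of $\mu_{\rm M}^{\phi_\mathcal{A}}$ is automatic; the square~\eqref{diagraminthmonassembly} commutes by the very definition of $\tilde\mu$; and the diagram with the two long exact rows commutes by stacking the ``natural transformation of long exact sequences'' statements for $\mu_{\rm geo}$ (diagram~\eqref{yeatanothercommuting}), for $\ch^{\phi_\mathcal{A}}_*$ (Theorem~\ref{cheronkthe}) and for the relative homological Chern character (Theorem~\ref{cheronhom}), observing that $\mu_{\rm M}^{\mathcal{A}_i}=\ch^{\mathcal{A}_i}_*\circ\mu_{\rm geo}^{\mathcal{A}_i}\circ(\ch^{B\Gamma_i}_*)^{-1}$ is the corresponding absolute (empty-boundary) statement, proved the same way.

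So the real content is the identification $\mu_{\rm M}^{\phi_\mathcal{A}}=\tilde\mu$. By Definition~\ref{definitionofmishass} it suffices to prove $[I_{f,g}(\nu)]=\tilde\mu\bigl((f,g)_*(\nu\cap[W])\bigr)$ for every manifold-with-boundary $W$, every closed form $\nu$ on $W$, and every pair of maps $(f,g)$ fitting into $[i\colon\partial W\to W]\to[B\phi\colon B\Gamma_1\to B\Gamma_2]$. Both sides are additive under disjoint union and $\C$-linear, and --- this is the point --- the left-hand side depends on $\nu$ only through its de Rham class $[\nu]\in H^*_{\rm dR}(W)$, since $I_{f,g}$ is a morphism of complexes from $(\Omega^*(W),\mathrm{d})$ to $(\Omega_*^{{\rm rel},<e>}(\phi),\partial^{\rm rel})$ by the Proposition immediately preceding Definition~\ref{definitionofmishass}. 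Now the Chern--Weil forms $\ch(\nabla_E)\wedge\mathrm{Td}(W)$, with $E\to W$ a Hermitian vector bundle carrying a product-type connection near $\partial W$, span $H^{\rm even}_{\rm dR}(W;\C)$ (the Chern character is a rational isomorphism onto $H^{\rm even}$ and $\mathrm{Td}(W)$ is a unit), so after clearing denominators and using $\C$-linearity one reduces to $\nu=\ch(\nabla_E)\wedge\mathrm{Td}(W)$, the odd-degree part of $H^*_{\rm dR}(W)$ being handled analogously with $K^1$-cycles. For such $\nu$, Remark~\ref{remarkonifg} gives $[I_{f,g}(\nu)]=\ch^{\phi_\mathcal{A}}_*\bigl(\mu_{\rm geo}^{\phi_\mathcal{A}}(W,E,(f,g))\bigr)$, while the definition of the relative homological Chern character gives $(f,g)_*(\nu\cap[W])=\ch^{B\phi}_*(W,E,(f,g))$; hence $[I_{f,g}(\nu)]=\tilde\mu\bigl((f,g)_*(\nu\cap[W])\bigr)$, as wanted. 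Independence of $\mu_{\rm M}^{\phi_\mathcal{A}}$ from the cut-off functions of Example~\ref{coveringexamplerelative} and from the Riemannian metric is then automatic, but can also be seen directly by the transgression argument in the proof of Proposition~\ref{cyclestoclasses}.

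\textbf{Main obstacle.} The delicate step is the reduction in the previous paragraph from Chern--Weil representatives to arbitrary closed forms, i.e.\ showing that $\mu_{\rm M}^{\phi_\mathcal{A}}$, as literally defined on all presentations $(f,g)_*(\nu\cap[W])$, coincides with $\tilde\mu$ and not merely on the Chern--Weil presentations; the $\C$-linearity argument above settles this once one knows such presentations span $H_*(B\phi;\C)$, which holds because $\ch^{B\phi}_*$ is onto and sends every geometric cycle to a class of precisely this (even-degree, Chern--Weil) form. Should one prefer not to go through $K$-theory at all, the alternative is to prove well-definedness of $\mu_{\rm M}^{\phi_\mathcal{A}}$ by bordism invariance directly: a relative bordism $((Z,V),\mu,(F,G))$ of the homology data produces, via Stokes applied to $I_{F,G}(\mu)$ exactly as in the Proposition preceding Definition~\ref{definitionofmishass}, a $\partial^{\rm rel}$-primitive of the difference of the chains $I_{f,g}(\nu)$ and $I_{f',g'}(\nu')$ attached to two presentations of the same class; this, together with representability of rational relative homology classes by such bordism data, yields the statement, and the two diagrams then commute by the same formal manipulations. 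Either way, nothing beyond this step is more than a routine application of the mapping-cone formalism and Chern--Weil theory.
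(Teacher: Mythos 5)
Your proposal is correct and follows essentially the same route as the paper: the paper's own (two-line) proof consists precisely of observing that the homological Chern characters $\ch_{B\Gamma_i}$ and $\ch_{\rm rel}^{B\phi}$ are complex isomorphisms (Theorem \ref{cheronhom}), so that $\mu_{\rm M}^{\phi_\mathcal{A}}$ is well defined, and that commutativity of both diagrams is immediate from the construction together with Remark \ref{remarkonifg} --- which is exactly your identification $\mu_{\rm M}^{\phi_\mathcal{A}}=\ch^{\phi_\mathcal{A}}_*\circ\mu_{\rm geo}^{\phi_\mathcal{A}}\circ(\ch^{B\phi}_*)^{-1}$ on complexified groups, stacked with the morphism-of-long-exact-sequence statements of Theorems \ref{relativeassthm}, \ref{cheronkthe} and \ref{cheronhom}. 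Your write-up is in fact more detailed than the paper's, since it spells out why the value on an arbitrary closed-form presentation agrees with the value on Chern--Weil presentations; the one step you leave as a sketch (odd-degree forms ``handled analogously with $K^1$-cycles'', e.g.\ by crossing with $S^1$ and applying the even case) is standard and does not amount to a gap beyond the paper's own level of detail.
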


\begin{proof}
The Chern characters $\ch:K_*^{\rm geo}(B\Gamma_i)\to H_*(B\Gamma_i)$ and $\ch_{\rm rel}^{B\phi}:K_*^{\rm geo}(B\phi)\to H_*^{\rm rel}(B\phi)$ are complex isomorphisms by Theorem \ref{cheronhom}. Hence, it follows that $\mu_{\rm M}^{\phi_\mathcal{A}}$ is well defined. Commutativity of the first diagram follows from the computations of Remark \ref{comattcuelc} (see page \pageref{comattcuelc}) and the definition of $I_{f,g}$ in Equation \eqref{ifgdef}. The commutativity of the diagram \eqref{diagraminthmonassembly} is immediate from the construction and Remark \ref{remarkonifg}.
\end{proof}

\begin{lemma}
\label{autolemma}
Let $\phi:\Gamma_1\to \Gamma_2$ be a group homomorphism. There is an automorphism $\zeta_\phi$ of $H_*(B\phi)$ such that $\mu_{\rm M}^{\phi_\mathcal{A}}=\mu_{\rm dR}^{\phi_\mathcal{A}}\circ \zeta_\phi$ for any Frechet algebra completions to which $\phi$ extends.
\end{lemma}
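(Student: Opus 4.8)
The plan is to compare the two assembly maps $\mu_{\rm M}^{\phi_\mathcal{A}}$ and $\mu_{\rm dR}^{\phi_\mathcal{A}}$ through their defining factorizations. Both are built from the identification of $H_*^{\rm rel}(B\phi)$ with the localized homology $H_*^{{\rm rel},<e>}(\phi)$ followed by a map into $\hat H_*^{\rm rel}(\phi_\mathcal{A})$. In the case of $\mu_{\rm dR}^{\phi_\mathcal{A}}$ this identification is the Burghelea-type isomorphism $\beta_{\rm rel}$ from Remark \ref{remarkonrelloc}, applied to $H_*^{\rm rel}(B\phi)\otimes HC_0(\C)\hookrightarrow H_*^{\rm rel}(B\phi)\otimes HC_*(\C)$, and the second map is induced by the inclusion of complexes $\Omega_*^{<e>}(\phi)\hookrightarrow \hat\Omega_*^{\rm ab}(\phi_\mathcal{A})$ (Definition \ref{inclusionassembly}). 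In the case of $\mu_{\rm M}^{\phi_\mathcal{A}}$ the relevant map $H_*^{\rm rel}(B\phi)\to \Omega_*^{{\rm rel},<e>}(\phi)$ is instead the one given explicitly by $(f,g)_*(\nu\cap[W])\mapsto I_{f,g}(\nu)$, which by Remark \ref{mishcurv} lands in the localized complex and — since $\omega_{(W,(f,g))}$ and $\omega_{(\partial W,g)}$ begin with their degree-$0$ components being the ordinary Chern--Weil forms of a flat bundle, i.e. scalars — represents a class in the $HC_0(\C)$-summand. So both maps factor through the inclusion $\Omega_*^{<e>}(\phi)\hookrightarrow\hat\Omega_*^{\rm ab}(\phi_\mathcal{A})$, and the discrepancy between them is entirely encoded by an endomorphism of $H_*^{{\rm rel},<e>}(\phi)$, equivalently (via $\beta_{\rm rel}$) of $H_*^{\rm rel}(B\phi)\otimes HC_*(\C)$.

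The key step is therefore to define $\zeta_\phi$ as the composite $\beta_{\rm rel}^{-1}\circ (\text{that endomorphism})\circ \beta_{\rm rel}$ restricted to the $HC_0$-summand, and to check it is an isomorphism. Concretely, the difference arises because the Mishchenko connection $\tilde\nabla_{\mathcal{L},h}$ used to define $I_{f,g}$ is a specific choice, and pairing $I_{f,g}(\nu)$ against a group cocycle of positive degree recovers, via the Connes--Moscovici computation (the relative analogue of \cite[Proposition 6.3]{cmnovikov}), the cap product of $\nu$ with the pullback of that cocycle — which is exactly the pairing computing $\beta_{\rm rel}(\nu\cap[W])$ up to the normalization constants coming from the higher-degree components of the Chern character of a flat bundle. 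These constants are nonzero (they are the universal coefficients appearing in $\ch = \sum_k \tfrac{(-1)^k}{k!}\,(\cdot)^k$ on a flat bundle, which determine the image of $HC_0$ under $S$-periodicity), so the induced endomorphism of $H_*^{\rm rel}(B\phi)\otimes HC_*(\C)$ is upper-triangular with invertible entries on the $HC_*(\C)$-grading, hence an automorphism; its restriction to the $HC_0$-summand, transported back by $\beta_{\rm rel}$, is $\zeta_\phi$. Naturality of $\beta_{\rm rel}$ and of the Connes--Moscovici index formula in $\phi$, already established in the diagram of Remark \ref{remarkonrelloc}, shows $\zeta_\phi$ does not depend on the completion $\mathcal{A}$: it is defined purely at the level of $\C[\Gamma_i]$ and the localized complexes.

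The main obstacle I expect is making precise the claim that $\mu_{\rm M}^{\phi_\mathcal{A}}$, as a map into $\hat H_*^{\rm rel}(\phi_\mathcal{A})$, factors through the \emph{localized} relative homology in a way that is detected by cyclic cocycles — the subtlety flagged in Remark \ref{remarkonrelloc} is that $\hat H_*^{{\rm rel}}(\phi_\mathcal{A})$ need not split off its localized part, so one cannot directly "project" a class in $\hat H_*^{\rm rel}(\phi_\mathcal{A})$ back to $H_*^{{\rm rel},<e>}(\phi)$. The fix is to do the comparison at the \emph{chain} level: $I_{f,g}(\nu)$ and the chain representing $\mu_{\rm dR}^{\phi_\mathcal{A}}(\beta_{\rm rel}(\nu\cap[W]))$ are both honest cycles in $\Omega_*^{{\rm rel},<e>}(\phi)$ before being pushed into $\hat\Omega_*^{\rm ab}(\phi_\mathcal{A})$, so the automorphism $\zeta_\phi$ is constructed entirely inside the well-behaved localized world and only afterwards composed with the (possibly non-split) inclusion. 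Verifying the triangularity/invertibility of the resulting endomorphism of $H_*^{\rm rel}(B\phi)\otimes HC_*(\C)$ is then a routine bookkeeping of the Chern--Weil expansion of a flat bundle's Chern form, carried out exactly as in \cite[Section 2.21--2.26]{Kaast} and \cite[Proposition 6.3]{cmnovikov}, now in the mapping-cone setting using the commuting diagram of Remark \ref{remarkonrelloc}.
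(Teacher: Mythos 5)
Your overall strategy is the paper's: compare the two maps at the chain level inside the localized complex $\Omega_*^{{\rm rel},<e>}(\phi)$ (using Remark \ref{mishcurv} to see that the Mishchenko--Chern chains $I_{f,g}(\nu)$ live there), transport the discrepancy through $\beta_{\rm rel}$, invoke Connes--Moscovici, and observe that everything is defined over $\C[\Gamma_i]$ so the result is independent of the completion. However, there are two concrete gaps at exactly the points where the paper does real work. First, your definition of $\zeta_\phi$ as ``the endomorphism restricted to the $HC_0(\C)$-summand'' is not well-formed: the endomorphism of $H_*^{\rm rel}(B\phi)\otimes HC_*(\C)$ induced by $I_{f,g}$ does not preserve that summand (the Chern form of the Mishchenko total connection has components of all noncommutative degrees), so restricting does not yield a map $H_*(B\phi)\to H_*(B\phi)$, and if you instead compose with the projection you must then explain why the discarded higher-$HC$ components do not spoil the identity $\mu_{\rm M}^{\phi_\mathcal{A}}=\mu_{\rm dR}^{\phi_\mathcal{A}}\circ\zeta_\phi$ after pushing into $\hat H^{\rm rel}_*(\phi_\mathcal{A})$, where the localized part need not split off. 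The paper avoids this by noting that the extension $\xi_\phi$ of $\mu_{\rm M}$ to $H_*^{{\rm rel},<e>}(\phi)$ is $HC_*(\C)$-linear, and an $HC_*(\C)$-linear automorphism descends canonically to an automorphism $\zeta_\phi$ of $H_*(B\phi)$.

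Second, your invertibility argument (``upper-triangular with invertible entries on the $HC_*(\C)$-grading, routine bookkeeping of the flat bundle's Chern expansion'') is asserted rather than proved, and it is not routine: the total curvature of the Mishchenko connection is not zero (Example \ref{computingcurv}), and after wedging with the Todd form and integrating, the resulting chain has components whose Burghelea degrees move in both directions of the integer grading, so the ``diagonal blocks'' with respect to the $HC$-filtration are a priori nontrivial endomorphisms of $H_*(B\phi)$, not scalars given by universal Chern coefficients. Identifying them is precisely the content of Connes--Moscovici's higher index theorem, and even then this only handles the absolute case. The paper's route is cleaner and is the step your sketch omits: by \cite[Proposition 6.3]{cmnovikov} the endomorphism $\xi$ is the identity in the absolute case, and then a five lemma argument in the localized relative long exact sequence of Remark \ref{remarkonrelloc} (whose rows compare the absolute and relative localized homologies) yields that $\xi_\phi$ is an isomorphism in the relative case. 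Without either that five lemma step or an honest verification of your triangularity claim, the invertibility of $\zeta_\phi$ --- the substance of the lemma --- is not established.
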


\begin{proof}
By Remark \ref{remarkonrelloc}, there is an isomorphism $\beta_{\rm rel}:H_*^{{\rm rel}, <e>}(\phi)\to H_*(B\phi)\otimes HC_*(\C)$. It follows from Example \ref{mishcurv} that Definition \ref{definitionofmishass} of $\mu_{\rm M}^{\phi_\mathcal{A}}$ indeed extends to an $HC_*(\C)$-linear mapping $\xi_\phi$ on $H_*^{{\rm rel}, <e>}(\phi)$ via $\beta_{\rm rel}$. Here we are implicitly using that $H_*^{{\rm rel}, <e>}(\phi)\subseteq H_*^{{\rm rel}}(\phi)$ is a direct summand. By construction, $\mu_{\rm M}^{\phi_\mathcal{\C[\Gamma]}}\circ \beta_{\rm rel}=\xi_\phi$. It follows from Connes-Moscovici's higher index theorem \cite[Proposition 6.3]{cmnovikov} that $\xi_\phi$ is the identity in the absolute case, so a five lemma argument implies that $\xi_\phi$ is an isomorphism. Since $\xi_\phi$ is $HC_*(\C)$-linear, $\xi_\phi$ induces an automorphism $\zeta_\phi$ of $H_*(B\phi)$ via $\beta_{\rm rel}$.
\end{proof}

\section{The strong relative Novikov property}

The ideas of \cite{cmnovikov}, recalled above in Proposition \ref{propcompass}.(2), can be applied in the relative setting to prove Novikov type properties for certain group homomorphisms. The key assumption is a relative admissibility condition similar to Definition \ref{admissibleextdef} on page \pageref{admissibleextdef}. We prove that a homomorphism of hyperbolic groups that extends to the reduced group $C^*$-algebras has the reduced strong Novikov property in Theorem \ref{rnovikovforhyp}.

\subsection{General results}
\label{genressec6}

\begin{define}
Let $\Gamma_i$ be groups and $\mathcal{A}_i$ Fr\'echet *-algebra completions of $\C[\Gamma_i]$, $i=1,2$. The pair $(\mathcal{A}_1,\mathcal{A}_2)$ is said to be admissible for a group homomorphism $\phi:\Gamma_1\to \Gamma_2$ if 
\begin{enumerate}
\item[i)] $\phi$ extends to a continuous $*$-homomorphism $\phi_\mathcal{A}:\mathcal{A}_1\to \mathcal{A}_2$, and
\item[ii)] any class $z\in H^*(B\phi)$ can be represented by a cocycle $(c_1,c_2)\in \mathcal{C}^*_{\rm rel}(B\phi)$ such that the mapping $(\hat{c}_1,\hat{c}_2):\Omega^{\rm rel}(\phi)\to \C$ extends by continuity to a mapping $\hat{H}^{\rm rel}_*(\phi_\mathcal{A})\to \C$.
\end{enumerate}
\end{define}

\begin{theorem}
\label{theoremonabscond}
If $(\mathcal{A}_1,\mathcal{A}_2)$ is admissible for a homomorphism $\phi$ between groups $\Gamma_1$ and $\Gamma_2$, and $B\Gamma_1$ and $B\Gamma_2$ locally finite $CW$-complexes, the assembly mapping $\mu_{\rm geo}^{\phi_\mathcal{A}}:K_*^{\rm geo}(B\phi)\to K_*^{\rm geo}(pt; \phi_\mathcal{A})$ is a rational injection.
\end{theorem}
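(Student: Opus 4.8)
The plan is to transport the proof of Proposition \ref{propcompass}.(2) to the relative setting: the relative Chern characters of Theorems \ref{cheronhom} and \ref{cheronkthe} reduce rational injectivity of $\mu_{\rm geo}^{\phi_\mathcal{A}}$ to injectivity of a homological assembly map, which is then established by detecting homology classes against the admissible relative cocycles supplied by hypothesis (ii). First, since $\Gamma_1$ and $\Gamma_2$ admit locally finite $CW$-complexes as classifying spaces, the relative Chern character $\ch^{B\phi}_*\colon K_*^{\rm geo}(B\phi)\to H_*(B\phi)$ is a complex isomorphism by Theorem \ref{cheronhom} (applied to the $CW$-inclusion obtained from the mapping cylinder of $B\phi$); in particular $\ker\ch^{B\phi}_*$ is exactly the torsion subgroup of $K_*^{\rm geo}(B\phi)$. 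The commuting square \eqref{diagraminthmonassembly} of Theorem \ref{thmonassembly} gives $\ch^{\phi_\mathcal{A}}_*\circ\mu_{\rm geo}^{\phi_\mathcal{A}}=\mu_{\rm M}^{\phi_\mathcal{A}}\circ\ch^{B\phi}_*$, so once we know that $\mu_{\rm M}^{\phi_\mathcal{A}}$ is injective it follows that $\ker\mu_{\rm geo}^{\phi_\mathcal{A}}\subseteq\ker\ch^{B\phi}_*$ is torsion, i.e. that $\mu_{\rm geo}^{\phi_\mathcal{A}}$ is a rational injection. By Lemma \ref{autolemma} one has $\mu_{\rm M}^{\phi_\mathcal{A}}=\mu_{\rm dR}^{\phi_\mathcal{A}}\circ\zeta_\phi$ with $\zeta_\phi$ an automorphism of $H_*(B\phi)$, so it suffices to prove that $\mu_{\rm dR}^{\phi_\mathcal{A}}$ is injective.

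For the injectivity of $\mu_{\rm dR}^{\phi_\mathcal{A}}$, fix $0\neq x\in H_*^{\rm rel}(B\phi)$. Since relative complex cohomology is the linear dual of relative complex homology, there is a class $z\in H^*(B\phi)$ with $\langle z,x\rangle\neq 0$; by admissibility hypothesis (ii), $z$ is represented by a relative group cocycle $(c_1,c_2)\in\mathcal{C}^*_{\rm rel}(B\phi)$ for which the functional $(\hat c_1,\hat c_2)$ extends by continuity to $\hat H_*^{\rm rel}(\phi_\mathcal{A})\to\C$. By Definition \ref{inclusionassembly}, $\mu_{\rm dR}^{\phi_\mathcal{A}}(x)$ is the image of the class $\beta_{\rm rel}(x)\in H_*^{{\rm rel},<e>}(\phi)$ --- with $x$ placed in the $HC_0(\C)$-summand of $H_*^{\rm rel}(B\phi)\otimes HC_*(\C)$ --- under the map induced by the inclusion of complexes $\Omega_*^{<e>}(\phi)\hookrightarrow\hat\Omega_*^{\rm ab}(\phi_\mathcal{A})$. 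This inclusion factors through the uncompleted relative de Rham complex on which $(\hat c_1,\hat c_2)$ is already defined, so continuity of the extension yields
$$\langle(\hat c_1,\hat c_2),\,\mu_{\rm dR}^{\phi_\mathcal{A}}(x)\rangle=\langle(\hat c_1,\hat c_2),\,\beta_{\rm rel}(x)\rangle,$$
the latter computed in $H_*^{{\rm rel},<e>}(\phi)$. By the explicit formula for $\hat c$ together with the defining property of the relative Burghelea-type isomorphism $\beta_{\rm rel}$ of Remark \ref{remarkonrelloc} (the relative counterpart of the identity underlying $\hat\beta_\Gamma$ in the ``Admissible completions'' subsection), the right-hand side equals the cohomology--homology pairing $\langle z,x\rangle$; the $HC_{>0}(\C)$-summands contribute nothing because a group cocycle has vanishing cyclic weight. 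Hence $\langle(\hat c_1,\hat c_2),\mu_{\rm dR}^{\phi_\mathcal{A}}(x)\rangle=\langle z,x\rangle\neq0$, so $\mu_{\rm dR}^{\phi_\mathcal{A}}(x)\neq0$. This proves injectivity of $\mu_{\rm dR}^{\phi_\mathcal{A}}$, and the theorem follows from the reduction above.

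The delicate part --- and the main obstacle --- is the identification $\langle(\hat c_1,\hat c_2),\mu_{\rm dR}^{\phi_\mathcal{A}}(x)\rangle=\langle z,x\rangle$. One must verify that the continuous functional on $\hat H_*^{\rm rel}(\phi_\mathcal{A})$ provided by admissibility (ii) genuinely restricts along $\Omega_*^{<e>}(\phi)\hookrightarrow\hat\Omega_*^{\rm ab}(\phi_\mathcal{A})$ to the naive pairing with the relative Burghelea class of $x$: that it is insensitive to the closure of the exact chains, and that on the $HC_0(\C)$-summand it computes the cohomology--homology pairing. This is a relative form of Connes--Moscovici's higher index theorem \cite[Proposition 6.3]{cmnovikov}; its absolute shadow already appears in the proof of Lemma \ref{autolemma} (that $\xi_\phi$ is the identity in the absolute case), and in the relative case it amounts to combining that computation with the naturality of $\beta_{\rm rel}$ encoded by the commuting diagram of Remark \ref{remarkonrelloc}. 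Everything else is a formal diagram chase using Theorems \ref{cheronhom}, \ref{thmonassembly} and Lemma \ref{autolemma}.
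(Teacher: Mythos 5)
Your proposal is correct and follows essentially the same route as the paper: reduce, via the square \eqref{diagraminthmonassembly}, Lemma \ref{autolemma} and the fact that $\ch^{B\phi}_*$ is a complex isomorphism for locally finite classifying spaces, to injectivity of $\mu_{\rm dR}^{\phi_\mathcal{A}}$, and then detect classes in $H_*^{\rm rel}(B\phi)$ by pairing with the cocycles $(\hat c_1,\hat c_2)$ supplied by admissibility. The only cosmetic difference is that you prove injectivity of $\mu_{\rm dR}^{\phi_\mathcal{A}}$ on all of $H_*^{\rm rel}(B\phi)$ rather than just on the image of $\ch^{B\phi}_*$, and the pairing identity you worry about is, as in the paper, essentially built into the construction of $\hat\beta$ and $\beta_{\rm rel}$ (the genuine index-theoretic input being confined to Lemma \ref{autolemma}, which you invoke).
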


\begin{proof}
Since $\ch_*^{B\phi}$ is a rational injection, the diagram \eqref{diagraminthmonassembly} in Theorem \ref{thmonassembly} and Lemma \ref{autolemma} implies that it suffices to prove that if $\mu_{\rm dR}^{\phi_\mathcal{A}}(\ch_*^{B\phi}(x))=0$ for an $x\in K_*^{\rm geo}(B\phi)$ then $\ch_*^{B\phi}(x)=0$. However, if $\mu_{\rm dR}^{\phi_\mathcal{A}}(\ch_*^{B\phi}(x))=0$ then for any cocycle $(c_1,c_2)\in \mathcal{C}^*_{\rm rel}(B\phi)$ such that the mapping $(\hat{c}_1,\hat{c}_2):\Omega^{\rm rel}(\phi)\to \C$ extends by continuity to a mapping $\hat{H}^{\rm rel}_*(\phi_\mathcal{A})\to \C$ it holds that 
$$\left\langle \left[(\hat{c}_1,\hat{c}_2)^T\right],\mu_{\rm dR}^{\phi_\mathcal{A}}(\ch_*^{B\phi}(x))\right\rangle=\left\langle \left[(c_1,c_2)^T\right],\ch_*^{B\phi}(x)\right\rangle=0.$$
We conclude that $\left\langle z,\ch_*^{B\phi}(x)\right\rangle=0$ for any class $z\in H^*(B\phi)$ because $(\mathcal{A}_1,\mathcal{A}_2)$ is admissible for the homomorphism $\phi$. The theorem follows.
\end{proof}

\begin{cor}
\label{cornoviko}
Suppose that $\phi$ is a homomorphism between groups $\Gamma_1$ and $\Gamma_2$ admitting  an admissible pair $(\mathcal{A}_1,\mathcal{A}_2)$ such that the inclusion $\mathcal{A}_i\subseteq C^*_{\epsilon}(\Gamma_i)$ induces a rational isomorphism on $K$-theory for some completion $\epsilon$. If $\phi$ extends to a $*$-homomorphism $C^*_\epsilon(\Gamma_1)\to C^*_\epsilon(\Gamma_2)$, then $\phi:\Gamma_1\to \Gamma_2$ has the $\epsilon$-strong relative Novikov property.
\end{cor}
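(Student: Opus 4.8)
The plan is to reduce everything to Theorem \ref{theoremonabscond} by transporting its conclusion along the inclusions $\mathcal{A}_i\hookrightarrow C^*_\epsilon(\Gamma_i)$. Write $\phi_\epsilon\colon C^*_\epsilon(\Gamma_1)\to C^*_\epsilon(\Gamma_2)$ for the extension of $\phi$ guaranteed by hypothesis and write $\iota$ for the inclusions $\mathcal{A}_i\hookrightarrow C^*_\epsilon(\Gamma_i)$. Unwinding the definitions, the $\epsilon$-strong relative Novikov property for $\phi$ is exactly the assertion that $\mu_{\rm geo}^{\phi_\epsilon}\colon K_*^{\rm geo}(B\phi)\to K_*^{\rm geo}(pt;\phi_\epsilon)$ is a rational injection.

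First I would record the functoriality of relative assembly in the coefficient algebra. The pair $(\iota,\iota)$ induces a change-of-coefficients homomorphism $\iota_*\colon K_*^{\rm geo}(pt;\phi_\mathcal{A})\to K_*^{\rm geo}(pt;\phi_\epsilon)$, and because $\mathcal{L}_{\mathcal{A}_i}\otimes_{\mathcal{A}_i}C^*_\epsilon(\Gamma_i)\cong\mathcal{L}_{C^*_\epsilon(\Gamma_i)}$ canonically (and compatibly with the clutching isomorphism $\alpha_0$ of Lemma \ref{isomopfmish}), one checks directly on cycles that $\mu_{\rm geo}^{\phi_\epsilon}=\iota_*\circ\mu_{\rm geo}^{\phi_\mathcal{A}}$. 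Since $(\mathcal{A}_1,\mathcal{A}_2)$ is admissible for $\phi$ and the groups admit locally finite $CW$ classifying spaces, Theorem \ref{theoremonabscond} gives that $\mu_{\rm geo}^{\phi_\mathcal{A}}$ is a rational injection. Hence it suffices to show that $\iota_*$ is a rational isomorphism, for then the composite $\mu_{\rm geo}^{\phi_\epsilon}$ is a rational injection, which is the claim.

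For the statement on $\iota_*$ I would compare the long exact sequences of Theorem \ref{lesforphi} for $\phi_\mathcal{A}$ and for $\phi_\epsilon$ — these are natural with respect to $\iota$ since the maps $r$ and $\delta$ are given by explicit formulas on cycles — and apply the five lemma. The input needed is that each $\iota_*\colon K_*^{\rm geo}(pt;\mathcal{A}_i)\to K_*^{\rm geo}(pt;C^*_\epsilon(\Gamma_i))$ is a rational isomorphism. Now $K_*^{\rm geo}(pt;C^*_\epsilon(\Gamma_i))\cong K_*(C^*_\epsilon(\Gamma_i))$ by Theorem \ref{closedandkhom}, and under the identification $K_*^{\rm geo}(pt;\mathcal{A}_i)\cong K_*(\mathcal{A}_i)$ the map $\iota_*$ becomes the $K$-theory map $K_*(\mathcal{A}_i)\to K_*(C^*_\epsilon(\Gamma_i))$, which is a rational isomorphism by hypothesis. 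Feeding this into the five-lemma diagram chase yields that $\iota_*\colon K_*^{\rm geo}(pt;\phi_\mathcal{A})\to K_*^{\rm geo}(pt;\phi_\epsilon)$ is a rational isomorphism, completing the argument.

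The only non-formal ingredient — and hence the point I would flag as the main obstacle — is the identification $K_*^{\rm geo}(pt;\mathcal{A}_i)\cong K_*(\mathcal{A}_i)$: geometric $K$-homology of a point agrees with $K$-theory for a Banach algebra by Theorem \ref{closedandkhom}.2, and more generally whenever $\mathcal{A}_i$ is holomorphically closed in a Banach algebra by Theorem \ref{closedandkhom}.3, which covers all completions relevant to the applications (cf.\ Example \ref{ageoneess}). For a general Fr\'echet completion this identification may fail, so this is the step where one genuinely uses the structure of the completions at hand; the rest is functoriality of the Mishchenko bundle under $\otimes$ together with the five lemma.
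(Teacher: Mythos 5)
Your argument is correct and is essentially the paper's (implicit) one: the corollary is stated without a written proof as an immediate consequence of Theorem \ref{theoremonabscond}, and the intended reduction---factoring $\mu_{\rm geo}^{\phi_\epsilon}$ through the change-of-coefficients map induced by $\mathcal{A}_i\hookrightarrow C^*_\epsilon(\Gamma_i)$ and running the five lemma on the exact sequences of Theorem \ref{lesforphi}, using Theorem \ref{closedandkhom} to translate the $K$-theory hypothesis---is exactly what you wrote and is what the paper carries out explicitly in the proofs of Theorems \ref{rnovikovforhyp} and \ref{polbddthm}. The caveat you flag, that $K_*^{\rm geo}(pt;\mathcal{A}_i)\cong K_*(\mathcal{A}_i)$ is needed and is not automatic for an arbitrary Fr\'echet completion, is a legitimate observation about the corollary as stated, but it is supplied by Theorem \ref{closedandkhom}.2--3 (Banach, or holomorphically closed in Banach) in every application the paper makes.
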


\subsection{Hyperbolic groups}

\begin{theorem}
\label{rnovikovforhyp}
A homomorphism of hyperbolic groups continuous in the reduced $C^*$-norm has the reduced strong relative Novikov property.
\end{theorem}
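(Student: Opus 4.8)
The plan is to deduce the theorem from Corollary \ref{cornoviko} by producing an admissible pair of Fr\'echet completions for a given homomorphism $\phi:\Gamma_1\to\Gamma_2$ of hyperbolic groups. Since an arbitrary homomorphism of hyperbolic groups need not have polynomial growth in word length functions, the Schwartz-type algebras $H^\infty_L$ need not be functorial along $\phi$; instead I would use the $\ell^1$-algebras, which are functorial in $\Gamma$ by Example \ref{ageoneess}, so condition (i) of the admissible-pair definition is automatic with $\mathcal{A}_i=\ell^1(\Gamma_i)$ and $\phi_{\mathcal{A}}=\phi_{\ell^1}$. For the $K$-theoretic hypothesis of Corollary \ref{cornoviko} I would invoke that hyperbolic groups satisfy both the Baum--Connes conjecture and the Bost conjecture (Lafforgue); since the Bost and Baum--Connes assembly maps fit into a commuting triangle with the inclusion $\ell^1(\Gamma_i)\hookrightarrow C^*_{\bf red}(\Gamma_i)$, both assembly maps being isomorphisms forces this inclusion to be an isomorphism on $K$-theory, cf. the discussion in Example \ref{ageoneess} and \cite{laffaICM}. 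The standing hypothesis that $\phi$ is continuous in the reduced $C^*$-norm supplies the required extension $C^*_{\bf red}(\Gamma_1)\to C^*_{\bf red}(\Gamma_2)$.

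The substance of the argument is verifying condition (ii): every class in $H^*(B\phi)$ is represented by a relative group cocycle both of whose components are \emph{bounded}. I would deduce this from Proposition \ref{hyperiso}, which says that for a hyperbolic group $\Gamma$ the inclusion $\mathcal{C}^*_{\ell^1(\Gamma)}(\Gamma)\hookrightarrow\mathcal{C}^*(\Gamma)$ of bounded cochains into all group cochains is a quasi-isomorphism. Pullback along a homomorphism obviously preserves boundedness of cochains, so for $\phi:\Gamma_1\to\Gamma_2$ one obtains a commuting square of cochain complexes whose two vertical arrows (the $\Gamma_1$- and $\Gamma_2$-versions) are quasi-isomorphisms. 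Forming mapping cones in the horizontal direction, i.e. passing to the relative complexes $\mathcal{C}^*_{\rm rel}(B\phi)$ and its bounded subcomplex, and applying the five lemma to the resulting long exact sequences, shows that the inclusion of the bounded relative complex into $\mathcal{C}^*_{\rm rel}(B\phi)$ is itself a quasi-isomorphism; in particular it is surjective on cohomology, which produces the desired bounded relative cocycle $(c_1,c_2)$.

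It then remains to check that such a bounded relative cocycle defines a \emph{continuous} functional on the relative de Rham complex $\hat{\Omega}^{\rm ab}_*(\phi_{\ell^1})$ descending to $\hat{H}^{\rm rel}_*(\phi_{\ell^1})$. This is exactly the estimate recorded in Example \ref{hyperelleone}: for a bounded group $k$-cochain $c$ on $\Gamma$ one has $|\hat{c}(a_0\,\mathrm{d}a_1\cdots\mathrm{d}a_k)|\le\|c\|_\infty\|a_0\|_1\cdots\|a_k\|_1$, so $\hat{c}$ extends by continuity to $\hat{\Omega}^{\rm ab}_k(\widetilde{\ell^1(\Gamma)})$. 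On the mapping-cone complex $\hat{\Omega}^{\rm ab}_*(\phi_{\ell^1})=\hat{\Omega}^{\rm ab}_*(\widetilde{\ell^1(\Gamma_2)})\oplus\hat{\Omega}^{\rm ab}_{*+1}(\widetilde{\ell^1(\Gamma_1)})$ the pair $(\hat{c}_2,\hat{c}_1)^T$ is then continuous, and because $(c_1,c_2)$ is a cocycle in $\mathcal{C}^*_{\rm rel}(B\phi)$ and $c\mapsto\hat{c}$ intertwines the group coboundary with (the dual of) $\mathrm{d}$, as in \cite{Lott,cmnovikov}, this functional annihilates boundaries and hence descends to a continuous map $\hat{H}^{\rm rel}_*(\phi_{\ell^1})\to\C$. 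This shows $(\ell^1(\Gamma_1),\ell^1(\Gamma_2))$ is admissible for $\phi$, and Corollary \ref{cornoviko} yields the reduced strong relative Novikov property.

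The main obstacle I anticipate is the middle paragraph: producing bounded representatives of \emph{relative} cohomology classes. While the absolute case is precisely Proposition \ref{hyperiso}, the relative version requires carrying out the mapping-cone and five-lemma bookkeeping carefully, in particular tracking the degree shift and signs in $\mathcal{C}^*_{\rm rel}(B\phi)$ and checking that one genuinely has a quasi-isomorphism of complexes (not merely an isomorphism on cohomology) so that it dualizes correctly against $\hat{\Omega}^{\rm ab}_*(\phi_{\ell^1})$. The remaining ingredients --- functoriality of $\ell^1$, the $K$-theory isomorphism from Bost plus Baum--Connes, and the continuity estimate --- are quoted from the literature or are the routine bookkeeping already present in \cite{cmnovikov}.
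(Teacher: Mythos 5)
Your proposal is correct and follows essentially the same route as the paper: $\ell^1$-completions plus Corollary \ref{cornoviko}, the $K$-theory isomorphism from Bost and Baum--Connes for hyperbolic groups, and admissibility of $(\ell^1(\Gamma_1),\ell^1(\Gamma_2))$ via the mapping cone of bounded cochain complexes, Proposition \ref{hyperiso}, the five lemma, and the continuity estimate of Example \ref{hyperelleone} (this is exactly the paper's Lemma \ref{admisforle}). The mapping-cone/five-lemma step you flag as the main obstacle is indeed all that the paper does there, and only surjectivity on relative cohomology is needed for admissibility, so your argument is complete.
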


Denote the hyperbolic groups by $\Gamma_1$ and $\Gamma_2$ and the homomorphism by $\phi$. We will use the dense subalgebras $\ell^1(\Gamma_i)\subseteq C^*_{\bf red}(\Gamma_i)$, $i=1,2$, and the results of Subsection \ref{genressec6}. Hyperbolic groups satisfy the Bost conjecture and the Baum-Connes conjecture, see \cite{laffa,mineyevandyu}, so the inclusions $\ell^1(\Gamma_i)\subseteq C^*_{\bf red}(\Gamma_i)$, $i=1,2$, induce isomorphisms on $K$-theory. Therefore, the theorem follows from Corollary \ref{cornoviko} if we can prove that $(\ell^1(\Gamma_1),\ell^1(\Gamma_2))$ is an admissible pair for the homomorphism $\phi$. 

We denote the mapping cone of $\phi^*:\mathcal{C}^*_{\ell^1}(\Gamma_2)\to \mathcal{C}^*_{\ell^1}(\Gamma_2)$ by $\mathcal{C}^*_{\ell^1}(\phi)$. There is an inclusion $\mathcal{C}^*_{\ell^1}(\phi)\subseteq \mathcal{C}^*(\phi)$, we call the elements in this image bounded. We can conclude that $(\ell^1(\Gamma_1),\ell^1(\Gamma_2))$ is an admissible pair for $\phi$ once any cocycle $c\in \mathcal{C}^*(\phi)$ is cohomologous to a bounded cocycle by Example \ref{hyperelleone}. The cohomology will be denoted by $H^*_{\ell^1}(\phi):=H^*(\mathcal{C}^*_{\ell^1}(\phi))$ and we consider the mapping $H^*_{\ell^1}(\phi)\to H^*(B\phi)$ induced by the inclusion of complexes. 

\begin{lemma}
\label{admisforle}
Let $\phi:\Gamma_1\to \Gamma_2$ be a homomorphism of hyperbolic groups. The canonical mapping $H^*_{\ell^1}(\phi)\to H^*_{\rm rel}(B\phi)$ is an isomorphism. In particular, any cocycle $c\in \mathcal{C}^*(\phi)$ is cohomologous to a bounded cocycle and $(\ell^1(\Gamma_1),\ell^1(\Gamma_2))$ is an admissible pair for $\phi$.  
\end{lemma}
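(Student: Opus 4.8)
The plan is to deduce the relative statement from the absolute one (Proposition \ref{hyperiso}, i.e. $H^*_{\ell^1}(\Gamma_i) = H^*_b(\Gamma_i,\C) \cong H^*(B\Gamma_i)$) by a five lemma argument applied to the long exact sequences of the two mapping cones. First I would record that the mapping cone construction on cochain complexes is functorial and takes a morphism of short exact sequences to a morphism of long exact sequences; concretely, the inclusion of complexes $\mathcal{C}^*_{\ell^1}(\Gamma_i)\hookrightarrow \mathcal{C}^*(\Gamma_i)$ is compatible with $\phi^*$ (since the pullback of a bounded cochain along a group homomorphism is bounded), so we obtain a commuting ladder
\begin{center}
\scriptsize
$\begin{CD}
@>>> H^*_{\ell^1}(\Gamma_2) @>\phi^*>> H^*_{\ell^1}(\Gamma_1) @>>> H^{*+1}_{\ell^1}(\phi) @>>> H^{*+1}_{\ell^1}(\Gamma_2) @>>> \\
@. @VVV @VVV @VVV @VVV \\
@>>> H^*(B\Gamma_2) @>(B\phi)^*>> H^*(B\Gamma_1) @>>> H^{*+1}_{\rm rel}(B\phi) @>>> H^{*+1}(B\Gamma_2) @>>> \\
\end{CD}$
\end{center}
with exact rows, where the top row is the long exact sequence of the mapping cone $\mathcal{C}^*_{\ell^1}(\phi)$ and the bottom row is the standard long exact sequence of relative cohomology of $B\phi$. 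By Proposition \ref{hyperiso}, the four outer vertical maps in every window are isomorphisms, so the five lemma gives that $H^*_{\ell^1}(\phi)\to H^*_{\rm rel}(B\phi)$ is an isomorphism.

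The remaining two assertions are then formal consequences. Since $H^*_{\ell^1}(\phi)\to H^*_{\rm rel}(B\phi)$ is surjective, every class in $H^*_{\rm rel}(B\phi) = H^*(\mathcal{C}^*(\phi))$ is represented by a bounded cocycle; that is, any cocycle $(c_1,c_2)\in \mathcal{C}^*(\phi)$ is cohomologous to one lying in $\mathcal{C}^*_{\ell^1}(\phi)$. Finally, for the admissibility of the pair $(\ell^1(\Gamma_1),\ell^1(\Gamma_2))$ one checks condition (ii) of the definition: given $z\in H^*(B\phi)$, choose a bounded representative $(c_1,c_2)$ as just produced; by Example \ref{hyperelleone} the individual bounded cochains $c_i$ extend to continuous functionals $\hat c_i$ on $\hat\Omega^{\rm ab}_*(\ell^1(\Gamma_i))$ (this is exactly the statement $\mathcal{C}^k_{\ell^1}(\Gamma_i) = $ bounded cochains, combined with the formula for $\hat c_i$ in Section \ref{subseconder}), hence the pair $(\hat c_1,\hat c_2)$ defines a continuous functional on the mapping cone complex $\hat\Omega^{\rm ab}_*(\phi_{\ell^1})$ and descends to $\hat H^{\rm rel}_*(\phi_{\ell^1})\to \C$. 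This is precisely condition (ii), and condition (i) holds by functoriality of $\ell^1$ (Example \ref{ageoneess}), so $(\ell^1(\Gamma_1),\ell^1(\Gamma_2))$ is admissible for $\phi$.

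I expect the only genuine point requiring care — and the main obstacle — to be the compatibility of the two long exact sequences used in the ladder: one must verify that the connecting maps for $\mathcal{C}^*_{\ell^1}(\phi)$ and for $H^*_{\rm rel}(B\phi)$ agree under the vertical inclusion maps, and that the identification of the bottom row with relative singular (equivalently group) cohomology of $B\phi$ is the same one under which Proposition \ref{hyperiso} identifies the outer terms. Both are routine once one uses the same cochain-level model ($\mathcal{C}^*(\phi)$ as the mapping cone of group cochain complexes, identified with $\mathcal{C}^*_{\rm rel}(B\phi)$ via a choice of model of $B\Gamma_i$), but it is the step where a sign or a shift could go wrong. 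Everything else is a direct application of the five lemma and of results already established in Example \ref{hyperelleone} and Proposition \ref{hyperiso}.
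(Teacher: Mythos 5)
Your proposal is correct and follows essentially the same route as the paper: the paper also obtains the isomorphism by comparing the long exact sequence of the mapping cone complex $\mathcal{C}^*_{\ell^1}(\phi)$ with the relative cohomology sequence of $B\phi$ via naturality, and then applies Proposition \ref{hyperiso} together with the five lemma, with the admissibility claim handled exactly as you do via Example \ref{hyperelleone}. The points you flag (compatibility of connecting maps, degree conventions for the cone) are the routine naturality checks the paper subsumes under ``by naturality.''
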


\begin{proof}
By construction, we have a short exact sequence of complexes
$$0\to \mathcal{C}^{*+1}_{\ell^1}(\Gamma_1)\to \mathcal{C}^*_{\ell^1}(\phi)\to \mathcal{C}^{*}_{\ell^1}(\Gamma_2)\to 0.$$
By naturality, we arrive at a commuting diagram with exact rows:
\begin{center}
\scriptsize
$\begin{CD}
@<<< H^*_{\ell^1}(\Gamma_1) @<\phi^*<< H^*_{\ell^1}(\Gamma_2) @<<< H^*_{\ell^1}(\phi) @<<< H^{*+1}_{\ell^1}(\Gamma_1) @<<< \\
@. @VVV @VVV  @VVV  @VVV\\
@<<< H^*(B\Gamma_1) @<B\phi^*<< H^*(B\Gamma_2) @<<< H^*_{\rm rel}(B\phi) @<<< H^{*+1}(B\Gamma_1) @<<< \\
\end{CD}$
\end{center}
The lemma now follows from Proposition \ref{hyperiso} and the five lemma.
\end{proof}

\begin{remark}
We remark that by Remark \ref{strongremark}, the relative Novikov conjecture (homotopy invariance of signatures of manifolds with boundaries) holds for hyperbolic groups. 
\end{remark}

\subsection{Groups with polynomially bounded cohomology in $\C$}

In this subsection we consider a polynomially bounded homomorphism $\phi:\Gamma_1\to \Gamma_2$ between groups with polynomially bounded cohomology in $\C$ (see Definition \ref{pcandpolbdd}).

\begin{theorem}
\label{polbddthm}
Let $\Gamma_1$ and $\Gamma_2$ be two groups with polynomially bounded cohomology in $\C$ and $\phi:\Gamma_1\to \Gamma_2$ a polynomially bounded homomorphism. Then the free geometric assembly mapping $\mu_{\rm geo}^{\phi_{\ell^1}}:K_*^{\rm geo}(B\phi)\to K_*^{\rm geo}(pt; \phi_{\ell^1})$ is a rational injection.
\end{theorem}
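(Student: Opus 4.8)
The plan is to deduce Theorem \ref{polbddthm} from Corollary \ref{cornoviko} in essentially the same way that Theorem \ref{rnovikovforhyp} was deduced. The two ingredients needed are: first, that the dense subalgebras $H^{1,\infty}_{L_i}(\Gamma_i)\subseteq \ell^1(\Gamma_i)$ (or directly the pair in $\ell^1$) induce rational isomorphisms on $K$-theory; and second, that $(\ell^1(\Gamma_1),\ell^1(\Gamma_2))$ — or more precisely a pair of Fr\'echet completions admissible for $\phi$ — satisfies the relative admissibility condition of Definition \ref{theoremonabscond}. For the first ingredient I would invoke Example \ref{ageoneess}: by \cite[Proposition 2.3]{jolissatwo}, $H^{1,\infty}_L(\Gamma)\hookrightarrow \ell^1(\Gamma)$ induces an isomorphism on $K$-theory for any group with a length function, so it suffices to work with the pair $(H^{1,\infty}_{L_1}(\Gamma_1), H^{1,\infty}_{L_2}(\Gamma_2))$, which is well defined and $\phi$ extends to it continuously precisely because $\phi$ is polynomially bounded. (Note that Corollary \ref{cornoviko} only asks for a \emph{rational} isomorphism on $K$-theory induced by $\mathcal{A}_i\subseteq C^*_\epsilon(\Gamma_i)$; here $\epsilon=\ell^1$, i.e.\ one takes the Banach completion $\ell^1(\Gamma_i)$ itself as the ``ambient'' algebra, in which case the inclusion $H^{1,\infty}_{L_i}\subseteq \ell^1(\Gamma_i)$ is even an integral isomorphism on $K$-theory, so no Bost/Baum-Connes input is needed.)

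The second and main ingredient is the relative admissibility, and this is where I expect the real work to be. The strategy mirrors Lemma \ref{admisforle}: one forms the mapping cone complex $\mathcal{C}^*_{H^{1,\infty}_L}(\phi)$ of $\phi^*:\mathcal{C}^*_{H^{1,\infty}_{L_2}}(\Gamma_2)\to \mathcal{C}^*_{H^{1,\infty}_{L_1}}(\Gamma_1)$, observes the short exact sequence
\begin{equation*}
0\to \mathcal{C}^{*+1}_{H^{1,\infty}_{L_1}}(\Gamma_1)\to \mathcal{C}^*_{H^{1,\infty}_L}(\phi)\to \mathcal{C}^*_{H^{1,\infty}_{L_2}}(\Gamma_2)\to 0,
\end{equation*}
and runs a five-lemma argument against the corresponding short exact sequence of ordinary group cochain complexes. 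The point here is that, by Definition \ref{pcandpolbdd}, ``$\Gamma_i$ has polynomially bounded cohomology in $\C$'' means exactly that $H^*_{H^{1,\infty}_{L_i}}(\Gamma_i)\to H^*(B\Gamma_i)$ is an \emph{isomorphism} (not merely a surjection, as (PC) would give). Feeding these two isomorphisms into the five lemma applied to the map of long exact sequences yields that $H^*_{H^{1,\infty}_L}(\phi)\to H^*_{\rm rel}(B\phi)$ is an isomorphism, hence every relative cochain class is representable by a polynomially bounded relative cocycle, which is precisely condition (ii) of the relative admissibility definition (the extension of $(\hat c_1,\hat c_2)$ to a continuous functional on $\hat H^{\rm rel}_*(\phi_{\mathcal{A}})$ follows as in the absolute description in the Example after Definition \ref{pcandpolbdd}, using that a polynomially bounded cocycle $c$ on $\Gamma_i$ gives a continuous map $\hat c:\hat\Omega^{\rm ab}_k(\widetilde{H^{1,\infty}_{L_i}(\Gamma_i)})\to\C$).

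Granting this, the theorem follows: Theorem \ref{theoremonabscond} gives that $\mu_{\rm geo}^{\phi_{\mathcal{A}}}$ is rationally injective for $\mathcal{A}_i=H^{1,\infty}_{L_i}(\Gamma_i)$, and since $H^{1,\infty}_{L_i}(\Gamma_i)\hookrightarrow \ell^1(\Gamma_i)$ induces an isomorphism on $K$-theory, Theorem \ref{closedandkhom}.3 (holomorphic closedness in $\ell^1$, by \cite[Proposition 2.3]{jolissatwo}) identifies $K_*^{\rm geo}(pt;\phi_{H^{1,\infty}_L})$ with $K_*^{\rm geo}(pt;\phi_{\ell^1})$ compatibly with assembly, so $\mu_{\rm geo}^{\phi_{\ell^1}}$ is rationally injective as claimed. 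I would phrase the final line as an application of Corollary \ref{cornoviko} with $\epsilon=\ell^1$. The main obstacle is genuinely the relative five-lemma bookkeeping: one must check that the naturality square relating $\mathcal{C}^*_{H^{1,\infty}_L}(\phi)\to\mathcal{C}^*(\phi)$ to the two absolute inclusions actually commutes with the connecting maps — this is a routine but slightly delicate diagram chase because the mapping cone differential mixes the two algebras and the functoriality of $H^{1,\infty}_L$ under $\phi$ must be invoked exactly where $\phi^*$ appears; everything else is a transcription of the hyperbolic case.
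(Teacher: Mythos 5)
Your proposal matches the paper's proof essentially step for step: reduce to the pair $(H^{1,\infty}_{L_1}(\Gamma_1),H^{1,\infty}_{L_2}(\Gamma_2))$ using that $\phi$ is polynomially bounded and that $H^{1,\infty}_{L_i}(\Gamma_i)\hookrightarrow\ell^1(\Gamma_i)$ is holomorphically closed (giving $K_*^{\rm geo}(pt;\phi_{H^{1,\infty}_L})\cong K_*^{\rm geo}(pt;\phi_{\ell^1})$ by a five lemma), then establish relative admissibility by the same mapping-cone/five-lemma argument as in Lemma \ref{admisforle} using that polynomially bounded cohomology in $\C$ gives isomorphisms $H^*_{H^{1,\infty}_{L_i}}(\Gamma_i)\cong H^*(B\Gamma_i)$, and conclude from Theorem \ref{theoremonabscond}. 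The only cosmetic point is that the paper invokes Theorem \ref{theoremonabscond} directly for the Fr\'echet pair rather than Corollary \ref{cornoviko} (which is phrased for $C^*$-completions, so ``$\epsilon=\ell^1$'' is a slight abuse), but your argument already contains the direct application, so this changes nothing.
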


The proof of Theorem \ref{polbddthm} goes along similar lines as that of Theorem \ref{rnovikovforhyp}, so we only provide the essential points. Since $\phi$ is polynomially bounded, it induces a continuous $*$-homomorphism $\phi_{H^{1,\infty}_L}:H^{1,\infty}_{L_1}(\Gamma_1)\to H^{1,\infty}_{L_2}(\Gamma_2)$. Recall the fact stated in Example \ref{ageoneess} that $H^{1,\infty}_{L_i}(\Gamma_i)\hookrightarrow \ell^1(\Gamma_i)$ is closed under holomorphic functional calculus (see \cite[Proposition 2.3]{jolissatwo}). A five lemma argument shows that the inclusions $H^{1,\infty}_{L_i}(\Gamma_i)\hookrightarrow \ell^1(\Gamma_i)$ compatible with $\phi$ induce an isomorphism $K_*^{\rm geo}(pt; \phi_{H^{1,\infty}_L})\cong K_*^{\rm geo}(pt; \phi_{\ell^1})$, and we reduce the proof to showing that 
$$\mu_{\rm geo}^{\phi_{H^{1,\infty}_L}}:K_*^{\rm geo}(B\phi)\to K_*^{\rm geo}(pt; \phi_{H^{1,\infty}_L}),$$
is rationally injective. Using the fact that $\Gamma_1$ and $\Gamma_2$ have polynomially bounded cohomology in $\C$, $H^*_{H^{1,\infty}_L}(\Gamma_i)\cong H^*(B\Gamma_i)$, and an argument involving the five lemma, as in Lemma \ref{admisforle}, shows that $(H^{1,\infty}_L(\Gamma_1),H^{1,\infty}_L(\Gamma_2))$ is admissible for $\phi$. Now, Theorem \ref{polbddthm} follows from Theorem \ref{theoremonabscond}.

\vspace{0.25cm}
\noindent
Email address: robin.deeley@gmail.com \vspace{0.25cm} \\
{ \footnotesize Department of Mathematics,
University of Colorado Boulder,
Campus Box 395,
Boulder, CO 80309-0395,
USA} \vspace{.5cm}\\
Email address: goffeng@chalmers.se \vspace{0.25cm} \\
{ \footnotesize Department of Mathematical Sciences, University of Gothenburg and Chalmers University of Technology, Chalmers Tv\"argata 3, 412 96 G\"oteborg, Sweden }

\begin{thebibliography}{99}
\bibitem{alosopa} J. M. Alonso, \emph{Combings of groups}, Algorithms and classification in combinatorial group theory (Berkeley, CA, 1989), p. 165--178. 
\bibitem{bcchern} P. Baum, A. Connes, \emph{Chern character for discrete groups}, A fete of topology, p. 163--232, Academic Press, Boston, MA, 1988.
\bibitem{BD}P. Baum and R. Douglas. {\it $K$-homology and index theory}. Operator Algebras and Applications (R. Kadison editor), volume 38 of Proceedings of Symposia in Pure Math., p. 117--173, Providence RI, 1982. AMS.
\bibitem{BE} P. Baum and E. van Erp. {\it $K$-homology and index theory on contact manifolds}, Acta Math. 213 (2014), no. 1, p. 1--48.
\bibitem{BHS}P. Baum, N. Higson, and T. Schick. {\it On the equivalence of geometric and analytic $K$-homology}. Pure Appl. Math. Q. 3: p. 1--24, 2007.
\bibitem{blackcuntz} B. Blackadar, J. Cuntz, {\it Differential Banach algebra norms and smooth subalgebras of $C\sp *$-algebras}, J. Operator Theory 26 (1991), no. 2, p. 255--282.
\bibitem{bradavijen} D. Blecher, J. Kaad, and B. Mesland, {\it Operator $*$-correspondences in analysis and geometry},  Proc. Lond. Math. Soc.117, Issue 2 (2018), 303-344.
\bibitem{burgher} D. Burghelea. {\it The cyclic homology of the group rings}, Comment. Math. Helv. 60 (1985), no. 3, p. 354--365.
\bibitem{CWY}S. Chang, S. Weinberger, and G. Yu. {\it Positive scalar curvature and a new index theory for noncompact manifolds}, arXiv:1506.03859.
\bibitem{Chatterjeeeee} I. Chatterji, and K. Ruane, {\it Some geometric groups with rapid decay}, Geom. Funct. Anal. 15 (2005), no. 2, p. 311--339. 
\bibitem{cmnovikov} A. Connes, H. Moscovici, {\it Cyclic cohomology, the Novikov conjecture and hyperbolic groups}, Topology 29 (1990), no. 3, p. 345--388.
\bibitem{Connesbook} A. Connes. {\it Noncommutative geometry}, Academic Press, Inc., San Diego, CA, 1994. xiv+661 pp
\bibitem{cuncycenc} J. Cuntz, G. Skandalis and B. Tsygan, {\it Cyclic homology in non-commutative geometry}, Encyclopaedia of Mathematical Sciences, 121. Operator Algebras and Non-commutative Geometry, II. Springer-Verlag, Berlin, 2004.
\bibitem{DeeZkz} R. J. Deeley. {\it Geometric $K$-homology with coefficients I: $\zkz$-cycles and Bockstein sequence}. J. $K$-theory, 9 (2012) no. 3, p. 537--564.
\bibitem{DeeRZ}R. J. Deeley. {\it R/Z-valued index theory via geometric $K$-homology}, M\"unster Journal of Math., {\bf 7} 2014 697--729.
\bibitem{DeeMappingCone}R. J. Deeley. {\it Analytic and topological index maps with values in the $K$-theory of mapping cones}. Journal of Noncommutative geometry {\bf 10} 2016 2: p. 681--708.
\bibitem{DG} R. J. Deeley and M. Goffeng, \emph{Realizing the analytic surgery group of Higson and Roe geometrically, Part I: The geometric model}, Journal of Homotopy and Related Structures, 12 (1) 2017, p. 109--142.
\bibitem{DGII} R. Deeley and M. Goffeng. {\it Realizing the analytic surgery group of Higson and Roe geometrically, Part II: Relative $\eta$-invariants}, Math. Ann. 366 (3) 2016, p. 1319--1363.
\bibitem{DGIII} R. J. Deeley and M. Goffeng, \emph{Realizing the analytic surgery group of Higson and Roe geometrically, Part III: Higher invariants}, Math. Ann. 366 (3), p. 1513--1559.
\bibitem{DGrelI} R. J. Deeley and M. Goffeng, \emph{Relative geometric assembly and mapping cones, Part I: The geometric model and applications}, Journal of Topology, Volume 11, Issue 4, 2018.
\bibitem{harpesart} P. de la Harpe, {\it Groupes hyperboliques, alg\'ebres d'op\'erateurs et un th\'eor\`{e}me de Jolissaint}, C. R. Acad. Sci. Paris S\'er. I Math. 307 (1988), no. 14, p. 771--774. 
\bibitem{engelnovi} A. Engel, {\it Banach strong Novikov conjecture for polynomially contractible groups}, Adv. Math. (330) 148--172, 2018.
\bibitem{epsteinetal} D. B. A. Epstein, J. W. Cannon, D. F. Holt, S. V. F. Levy, M. S. Paterson, and W. P. Thurston, {\it Word processing in groups}, Jones and Bartlett Publishers, Boston, MA, 1992. xii+330 pp.
\bibitem{ghysdelaharpe} E. Ghys, P. de la Harpe, {\it Sur les groupes hyperboliques d'apr\`es Mikhael Gromov} (Bern, 1988), 47--66, Progr. Math., 83, Birkhäuser Boston, Boston, MA, 1990.
\bibitem{GM} M. Goffeng, B. Mesland, \emph{Spectral triples and finite summability on Cuntz-Krieger algebras}, Doc. Math. 20 (2015). p. 89--170.
\bibitem{helembook} A. Y. Helemskii. {\it The homology of Banach and topological algebras}, Translated from the Russian by Alan West. Mathematics and its Applications (Soviet Series), 41. Kluwer Academic Publishers Group, Dordrecht, 1989.
\bibitem{HRsur1}N. Higson and J. Roe. {\it Mapping surgery to analysis I: Analytic signatures}. $K$-theory 33: p. 277--299, 2005.
\bibitem{HR}N. Higson and J. Roe. {\it Analytic $K$-homology}. Oxford University Press, Oxford, 2000.
\bibitem{HReta}N. Higson and J. Roe. {\it $K$-homology, assembly and rigidity theorems for relative $\eta$-invariants}, Pure Appl. Math. Q. 6 (2010), no. 2, Special Issue: In honor of Michael Atiyah and Isadore Singer, p. 555--601.
\bibitem{jiora} R. Ji, C. Ogle, and B. Ramsey, {\it $\mathcal{B}$-bounded cohomology and applications}, Internat. J. Algebra Comput. 23 (2013), no. 1, p. 147--204. 
\bibitem{bejohn} B.E. Johnson, \emph{Cohomology in Banach algebras}. Memoirs of the American Mathematical Society, No. 127. American Mathematical Society, Providence, R.I., 1972.
\bibitem{jolissatwo} P. Jolissaint, {\it $K$-theory of reduced $C^*$-algebras and rapidly decreasing functions on groups}, $K$-Theory 2 (1989), no. 6, p. 723--735.
\bibitem{Jollisiantttt} P. Jolissaint, {\it Rapidly decreasing functions in reduced $C^*$-algebras of groups}, Trans. Amer. Math. Soc. 317 (1990), no. 1, p. 167--196. 
\bibitem{Kaast} M. Karoubi. {\it Homologie cyclique et $K$-th\'eorie}, Ast\'erisque No. 149 (1987), 147 pp. 
\bibitem{Kas} G. Kasparov. {\it Equivariant $KK$-theory and the Novikov conjecture}. Invent. Math. 91, p. 147--201, 1998.
\bibitem{laffa} V. Lafforgue, {\it K-th\'eorie bivariante pour les alg\`ebres de Banach et conjecture de Baum-Connes}, Invent. Math. 149 (2002), no. 1, p. 1--95. 
\bibitem{laffaICM}V. Lafforgue, {\it Banach $KK$-theory and the Baum-Connes Conjecture}. ICM 2002 Vol. II, p. 795--812.
\bibitem{land} M. Land, \emph{The Analytical Assembly Map and Index Theory},  J. Noncommut. Geom. 9 (2015), no. 2, p. 603--619.
\bibitem{leschmopfl} M. Lesch, H. Moscovici, M. J. Pflaum, \emph{Connes-Chern character for manifolds with boundary and eta cochains}, Mem. Amer. Math. Soc. 220 (2012), no. 1036.
\bibitem{Lott} J. Lott. {\it Higher eta invariants}, $K$-Theory {\bf 6}, p. 191--233, 1992.
\bibitem{lottsuper} J. Lott. {\it Superconnections and higher index theory}, Geom. Funct. Anal. 2 (1992), no. 4, p. 421--454. 
\bibitem{minstra} I. Mineyev, {\it Straightening and bounded cohomology of hyperbolic groups}, Geom. Funct. Anal. 11 (2001), no. 4, p. 807--839. 
\bibitem{meyer}R. Meyer, {\it Local and analytic cyclic homology}, EMS Tracts in Mathematics, 3. European Mathematical Society (EMS), Zurich, 2007. viii+360 pp.
\bibitem{mineyevandyu} I. Mineyev, and G. Yu, {\it The Baum-Connes conjecture for hyperbolic groups}, Invent. Math. 149 (2002), no. 1, p. 97--122. 
\bibitem{puschnigg}M. Puschnigg, {\it Asymptotic cyclic cohomology}, Lecture Notes in Mathematics, 1642. Springer-Verlag, Berlin, 1996. xxiv+238 pp.
\bibitem{pushing} M. Puschnigg, {\it New holomorphically closed subalgebras of $C\sp *$-algebras of hyperbolic groups}, Geom. Funct. Anal. 20 (2010), no. 1, p. 243--259. 
\bibitem{qr10} Y. Qiao and J. Roe, {\it On the localization algebra of Guoliang Yu}, Forum Math. 22 (2010), p. 657--665.
\bibitem{Rav} J. Raven. {\it An equivariant bivariant chern character}, PhD Thesis, Pennsylvania State University, 2004. (available online at the Pennsylvania State Digital Library).
\bibitem{SchL2}T. Schick. {\it $L^2$-index theorems, $KK$-theory, and connections}, New York J. Math., 11: p. 387--443, 2005.
\bibitem{simontrace} B. Simon, {\it Trace ideals and their applications}. Second edition. Mathematical Surveys and Monographs, 120. American Mathematical Society, Providence, RI, 2005.
\bibitem{jotaylor} J. L. Taylor. {\it Homology and cohomology for topological algebras}, Advances in Math. 9, p. 137--182. (1972). 
\bibitem{wahlProFor} C. Wahl. {\it Product formula for Atiyah-Patodi-Singer index classes and higher signatures}. J. $K$-theory {\bf 6} (2010), no. 2, p. 285--337.
\bibitem{WahlAPSForCstarBun} C. Wahl. {\it The Atiyah-Patodi-Singer index theorem for Dirac operator over $C^*$-algebras},  Asian J. Math. 17 (2013), no. 2, p. 265--319. 
\bibitem{WahlSurSet} C. Wahl. {\it Higher $\rho$-invariants and the surgery structure set},  J. Topol. 6 (2013), no. 1, p. 154--192.
\bibitem{Wal} M. Walter. {\it Equivariant geometric $K$-homology with coefficients}. Diplomarbeit University of Goettingen. 2010.
\bibitem{weinnov} S. Weinberger, {\it Aspects of the Novikov conjecture}, Geometric and topological invariants of elliptic operators (Brunswick, ME, 1988), p. 281--297, Contemp. Math., 105, Amer. Math. Soc., Providence, RI, 1990.
\bibitem{xieyu} Z. Xie and G. Yu. {\it Positive scalar curvature, higher $\rho$-invariants and localization algebras}, Adv. Math. 262 (2014), p. 823--866. 
\bibitem{Yu} G. Yu, {\it Localization algebras and the coarse Baum-Connes conjecture}, $K$-theory {\bf 11} no. 4,  p. 307--318, 1997.
\bibitem{zenobisecondary} V. F. Zenobi, {\it  Adiabatic groupoids and secondary invariants in $K$-theory}, arXiv:1609.08015.
\end{thebibliography}
\end{document}